\numberwithin{equation}{section}
\newtheorem{Theorem}{Theorem}[section]
\newtheorem{Lemma}[Theorem]{Lemma}
\newtheorem{Example}[Theorem]{Example}
\newtheorem{Proposition}[Theorem]{Proposition}
\newtheorem{Definition}[Theorem]{Definition}
\newtheorem{Corollary}[Theorem]{Corollary}
\newtheorem{Remark}[Theorem]{Remark}
\numberwithin{equation}{section}
 \def\p{\partial} 
\def \Vh0{\stackrel{\circ}{V}_h} \def\to{\rightarrow}
  \def\f{\frac}  
   \def\eps{\varepsilon}
\def\p{\partial}
\newcommand{\lc}
{\mathrel{\raise2pt\hbox{${\mathop<\limits_{\raise1pt\hbox
{\mbox{$\sim$}}}}$}}}
\newcommand{\gc}
{\mathrel{\raise2pt\hbox{${\mathop>\limits_{\raise1pt\hbox{\mbox{$\sim$}}}}$}}}
\newcommand{\ec}
{\mathrel{\raise2pt\hbox{${\mathop=\limits_{\raise1pt\hbox{\mbox{$\sim$}}}}$}}}
\def\bb{\begin{equation}} \def\ee{\end{equation}}
\def\bb{\begin{definition}} \def\ee{\end{definition}}
\def\beqn{\begin{eqnarray}}  \def\eqn{\end{eqnarray}}
\def\beqnx{\begin{eqnarray*}} \def\eqnx{\end{eqnarray*}}
\def\bn{\begin{enumerate}} \def\en{\end{enumerate}}
\def\bd{\begin{description}} \def\ed{\end{description}}
\title[Quantum Integrable Systems and plasmon resonances]{Quantum Integrable Systems and Concentration of Plasmon Resonance}
\author{Habib Ammari}
\address{Department of Mathematics, ETH Z\"urich, R\"amistrasse 101, CH-8092, Switzerland}
\email{habib.ammari@math.ethz.ch}
\author{Yat Tin Chow}
\address{Department of Mathematics, University of California, Riverside, USA}
\email{yattinc@ucr.edu}
\author{Hongyu Liu}
\address{Department of Mathematics, City University of Hong Kong, Hong Kong SAR, China}
\email{hongyu.liuip@gmail.com, hongyliu@cityu.edu.hk}
\author{Mahesh Sunkula}
\address{Department of Mathematics, Purdue University, West Lafayette, USA}
\email{msunkula@purdue.edu}
\begin{document}
\maketitle

\begin{abstract}
We are concerned with the quantitative mathematical understanding of surface plasmon resonance (SPR) {\color{black} when $d \geq 3$.} SPR is the resonant oscillation of conducting electrons at the interface between negative and positive permittivity materials and forms the fundamental basis of many cutting-edge applications of metamaterials. It is recently found that the SPR concentrates due to curvature effect. In this paper, we derive sharper and more explicit characterisations of the SPR concentration at high-curvature places in both the static and quasi-static regimes. The study can be boiled down to analyzing the geometries of the so-called Neumann-Poincar\'e (NP) operators, which are certain pseudo-differential operators sitting on the interfacial boundary. We propose to study the joint Hamiltonian flow of an integral system given by the moment map defined by the NP operator. Via considering the Heisenberg picture and lifting the joint flow to a joint wave propagator, we establish a more general version of quantum ergodicity on each leaf of the foliation of this integrable system, which can then be used to establish the desired SPR concentration results. The mathematical framework developed in this paper leverages the Heisenberg picture of quantization and extends some results of quantum integrable system via generalising the concept of quantum ergodicity, which can be of independent interest to the spectral theory and the potential theory.

\medskip

\noindent{\bf Keywords:}~~surface plasmon resonance, localization, quantum integrable system, quantum ergodicity, high curvature, Neumann-Poincar\'e operator, quantization   

\noindent{\bf 2010 Mathematics Subject Classification:}~~58J50, 58J51, 35Q60, 82D80

\end{abstract}

\section{Introduction}\label{sect:1}

\subsection{Physical background and motivation}\label{sect:1.2}

In this paper, we are concerned with the quantitative mathematical understanding of surface plasmon resonance (SPR) {\color{black} when $d\geq 3$}. SPR is the resonant oscillation of conducting electrons at the interface between negative and positive permittivity materials and forms the fundamental basis of many cutting-edge applications of metamaterials. To motivate the study, we briefly discuss the mathematical setup of SPR. 

Let $D$ be a bounded $\mathcal{C}^{\infty}$ domain in $\mathbb{R}^d$, $d\geq 2$, with a connected complement $\mathbb{R}^d\backslash\overline{D}$. Let $\gamma_c$ and $\gamma_m$ be two real constants with $\gamma_m\in\mathbb{R}_+$ given and fixed. Let 
\begin{equation}\label{eq:mc1}
\gamma_{D}  = \gamma_c \chi(D) +  \gamma_m \chi(\mathbb{R}^d \backslash \overline{D}),
\end{equation}
where and also in what follows, $\chi$ stands for the characteristic function of a domain.
{\color{black} Consider the following homogeneous problem for a potential field $u\in H_{loc}^1(\mathbb{R}^d)$,
 \beqn
       \mathcal{L}_{\gamma_D} u = 0 \ \ \mbox{in}\ \ \mathbb{R}^d;\quad u(x) =  \mathcal{O}(|x|^{1-d}) \ \ \mbox{ as }\; |x| \rightarrow \infty,
    \label{transmission}
\eqn
where $\mathcal{L}_{\gamma_D} u:=\nabla(\gamma_D\nabla u)$. It is clear that $u\equiv 0$ is a trivial solution to \eqref{transmission}. If there exists a nontrivial solution $u$ to \eqref{transmission}, then $\gamma_c$ is called a plasmonic eigenvalue and $u$ is the associated plasmonic eigenfunction. } It is apparent that a plasmonic eigenvalue must be negative, since otherwise by the ellipticity of the partial differential operator (PDO) $\mathcal{L}_{\gamma_D} u:=\nabla(\gamma_D\nabla u)$, \eqref{transmission} admits only a trivial solution. That is, the negativity of $\gamma_c$ may enable that $\mathrm{Ker}(\mathcal{L}_{\gamma_D})\neq \emptyset$ which consists of the nontrivial solutions to \eqref{transmission}. In the physical scenario, the nontrivial kernel can induce a resonant field in a standard way. In fact, let us consider the following electrostatic problem for $u\in H_{loc}^1(\mathbb{R}^d)$:
\beqn
    \begin{cases}
        \nabla \cdot (\gamma_{D} \nabla u) = 0 \ \text{ in }\; \mathbb{R}^d, \\[1.5mm]
        (u-u_0)(x)=\mathcal{O}(|x|^{1-d})\ \mbox{as}\ |x|\rightarrow\infty,
    \end{cases}
    \label{transmissionE}
\eqn
where $u_0$ is a harmonic function in $\mathbb{R}^d$ that signifies an incident field, and $u$ is the incurred electric potential field. In the physical setting, $\gamma_c$ and $\gamma_m$ respectively specify the dielectric constants of the inclusion $D$ and the matrix space $\mathbb{R}^d\backslash\overline{D}$. 
If $\gamma_c$ is a plasmonic eigenvalue and moreover if $u_0$ is properly chosen in a way that $\mathcal{L}_{\gamma_D}u_0$ sits in the space spanned by $\mathcal{L}_{\gamma_D}$ acting on the plasmonic eigenfunctions, it is clear that a resonant field can be induced which is a linear superposition of the fields in $\mathrm{Ker}(\mathcal{L}_{\gamma_D})$. It is not surprising that the resonant field exhibits a highly oscillatory pattern. However, it is highly intriguing that the high oscillation mainly propagates along the material interface, namely $\partial D$. This peculiar phenomenon is referred to as the surface plasmon resonance (SPR). The SPR forms the fundamental basis for an array of frontier industrial and engineering applications including highly sensitive biological detectors to invisibility cloaks \cite{BS,DLZ3,FM,Kli,LZ,MN,OI,Sch,SPW,Z}. Its theoretical understanding also arouses growing interest in the mathematical literature \cite{ACKLM,Amm1,AJ,AKL,CGL,DLL3,G,LL3,shapiro,LL1,plasmon1,weyl2,weyl1}, especially its intriguing and delicate connection to the spectral theory of the Neumann-Poincar\'e (NP) operator as described in what follows. 

The NP operator is a classical weakly-singular boundary integral operator in potential theory \cite{book,kellog} and is defined by:
\beqn
    \mathcal{K}^*_{\partial D} [\phi] (x) := \f{1}{\varpi_d} \int_{\partial D} \f{\langle  x-y,\nu (x) \rangle  }{|x-y|^d} \phi(y) d \sigma(y), \quad x\in\partial D,
    \label{operatorK}
\eqn
where $\varpi_d$ signifies the surface area of the unit sphere in $\mathbb{R}^d$ and $\nu(x)$ signifies the unit outward normal at $x \in \partial D$. In studying the plasmonic eigenvalue problem \eqref{transmission}, we shall also need to introduce the following single-layer potential:
\beqn
   \mathcal{S}_{\partial D} [\phi] (x) := \int_{\partial D} \Gamma(x-y) \phi(y) d \sigma(y),\; x\in\mathbb{R}^d, \label{eq:sl1} 
\eqn
where $\Gamma$ is the fundamental solution to $-\Delta$ in $\mathbb{R}^d$ :
\beqn
    \Gamma (x-y) =
    \begin{cases}
     -\f{1}{2\pi} \log |x-y| & \text{ if }\; d = 2 \, ,\\
     \f{1}{(2-d)\varpi_d} |x-y|^{2-d} & \text{ if }\; d > 2 \, .
    \end{cases}
    \label{fundamental}
\eqn
The following jump relation holds across $\partial D$ for $\phi\in H^{-1/2}(\partial D)$: 
\beqn
    \f{\p}{\p \nu} \left(  \mathcal{S}_{\partial D} [\phi] \right)^{\pm}(x) = (\pm \f{1}{2} Id + \mathcal{K}^*_{ \partial D} )[\phi](x),\ \ x\in\partial D,
    \label{jump_condition}
\eqn
where $\pm$ signify the traces taken from the inside and outside of $D$ respectively, and $Id$ is the identity operator. {\color{black} Using \eqref{operatorK}--\eqref{jump_condition}, it can be directly verified that the plasmonic eigenvalue problem \eqref{transmission} is equivalent to the following spectral problem of determining $\lambda(\gamma_c,\gamma_m):=(\gamma_c+\gamma_m)/[2(\gamma_c-\gamma_m)]$ and a nontrivial surface density distribution $\phi\in H^{-1/2}(\partial D, d \sigma)$ such that:
\begin{equation}\label{eq:eigen1}
\begin{split}
u(x)=\mathcal{S}_{\partial D} [\phi](x), \ x\in \mathbb{R}^d;\quad
 \mathcal{K}_{\partial D}^*[\phi](x)=\lambda(\gamma_c, \gamma_m)\phi(x),\ x\in \partial D. 
\end{split}
\end{equation}
 That is, in order to determine the plasmonic eigenvalue $\gamma_c$ of \eqref{transmission}, it is sufficient to determine the eigenvalues of the NP operator $\mathcal{K}_{\partial D}^*$. } On the other hand, in order to understand the peculiar behaviour of the plasmonic resonant field, one needs to study the quantitative properties of the NP eigenfunctions in \eqref{eq:eigen1} as well as the associated single-layer potentials in \eqref{eq:sl1}. 

The NP operator $\mathcal{K}_{\partial D}^*$ is compact and hence its eigenvalues are discrete, infinite and accumulating at zero. A classical result is that $\lambda(\mathcal{K}_{\partial D}^*)\subset (-1/2, 1/2]$, which consolidates the negativity of a plasmonic eigenvalue in \eqref{eq:eigen1}. Due to their connection to the SPR discussed above, the quantitative properties of the NP eigenvalues have been extensively studied in recent years; see e.g. \cite{ACKLM2,LLL1,curv_Liu_3,KLY,weyl2,weyl1} and the references cited therein. As is mentioned earlier that the SPR mainly oscillates around the material interface $\partial D$, which is rigorously justified in \cite{AKMN}. It is found mainly through numerics in \cite{curv_Liu_3} that the SPR tends to concentrate at high-curvature places on $\partial D$. In \cite{ACL2}, a theoretical understanding is established to investigate such a peculiar curvature effect of the SPR when $D$ is convex. In \cite{DLZ,DLZ2}, a specific (possibly curved) nanorod geometry was considered, and it is shown that the SPR concentrates at the two ends of the nanorod, where both the mean and Gaussian curvatures are high. In this paper, by developing and exploring new technical tools, we shall derive sharper and more explicit characterisations of the SPR concentration phenomenon driven by the extrinsic curvature {\color{black} when $d \geq 3$.} It is remarked that according to the discussion in \cite{ACL2}, in order to study the SPR concentration, it is sufficient for us to consider the concentration of the NP eigenfunctions in \eqref{eq:eigen1} driven by the extrinsic curvature. Moreover, in addition to the static problem \eqref{transmission}, we shall also consider the study in the quasi-static regime, which shall be described in Section~6. Finally, we would like to mention in passing some related works on the polariton resonance associated with elastic metamaterials \cite{AKM,DLL,DLL2,LLL2,LL2,MR3} and the mathematical framework developed in this paper can be extended to study the geometric properties of the polartion resonance. 

\subsection{Discussion of the technical novelty}

In order to provide a global view of the technical contributions in this article, we briefly discuss the mathematical strategies and the new technical tools that are proposed and developed for tackling the concentration of the NP eigenfunctions and hence the SPR. 

The layer potential operators are pseudo-differential operators whose principal symbols encode the geometric characters of $\partial D$ {\color{black} when $d \geq 3$. It is pointed out that they are of Cauchy type when $d = 2$.} Our main idea in this work is to analyze the quantum ergodicity properties of these operators {\color{black}when $d \geq 3$ and under an addtional assumption, which we refer to as Assumption (A) in Section \ref{sect:3}.} To that end, we study the joint Hamiltonian flow of commuting Hamiltonians, one of which is the principal symbol of the NP operator. 
Using this, we derive a new version of generalized Weyl's law that the asymptotic average of the magnitude of the joint eigenfunctions of the joint spectrum sitting inside a given polytope in a neighbourhood of each point is directly proportional to a weighted volume of the pre-image of the polytope by the moment map at the respective point. {\color{black} In particular, we would like to point out that it generalizes the related results in \cite{PelayoVuNgoc11, VuNgoc06, VuNgoc07} for quantum integrable system, as well as \cite{Sogge} where a pointwise generalized Weyl's law of the Laplacian is proved.}

Then, we lift the joint Hamiltonian flow to a joint wave propagator via the Heisenberg picture. We obtain a quantum ergodicity result on each leaf of the foliation of the underlying integral system. {\color{black} This extends the classical results in the literature \cite{erg1,erg11, Toth02, Toth13, erg32,erg33,erg34,erg35, erg2,erg4}.} 
By using our established quantum ergodicity result, we further obtain a subsequence (of density one) of eigenfunctions such that their magnitude weakly converges to a weighted average of ergodic measures over each of the leaf, where this weighted average at different points again relates to the volume of the pre-image of the polytope by the moment map at the respective point. We provide explicit upper and lower bounds of the aforementioned volume as functions only depending on the principal curvatures. When the joint flow is ergodic with respect to the Liouville measures on each of the leaves, we obtain a more explicit description of the weighted average. With that, we provides a more explicit and sharper characterization of the localization of the plasmon resonance driven the associated extrinsic curvature at a specific boundary point {\color{black}when $d \geq 3$.} In fact, we provide an explicit and motivating example of a manifold with rotational symmetry, where the joint flow and the Lagrangian foliation can be explicitly worked out, and the bounds via the principal curvatures can also be calculated explicitly. From our result, we have associated the quantitative understanding of the plasmon resonances to the dynamical properties of the Hamiltonian flows. 
{\color{black} To our best knowledge, the first time when quantum integrable system is considered to show eigenfunction concentration on Lagrangian submanifolds is in \cite{Toth02}, where the Laplacian eigenfunctions are discussed.}
%

Finally, we would like remark that at the first glance, it is a bit paradoxical to still hold the name of quantum ergodicity when we are investigating a quantum integrable system, since as is conventionally known that the descriptions of a (complete) integrable system and that of ergodicity are almost on the opposite sides of the spectrum of a dynamical system. However, our discussion is on the ergodicity on the leaves of the foliation given by the integrable system, say e.g. the Lagrangian tori if we have a complete integrable system, and therefore no paradox emerges.

The rest of the paper is organized as follows. Sections 2 and 3 are devoted to preliminaries for the sake of completeness and self-containedness of the paper. In Section 2, we briefly recall the principal symbols of the layer-potential operators following the discussions in \cite{ACL,ACL2} 
{\color{black} as well as \cite{Taylor,ola}.} In Section 3, we provide a general and brief introduction to quantum integrable systems. In Section 4, we establish a generalized Weyl's law over quantum integrable systems for our purpose, and generalize the argument of the quantum ergodicity over each leaf of the folliation to obtain a variance-like estimate.
Section 5 and 6 are respectively devoted to the quantitative results of the concentration of the plasmon resonances in the static and quasi-static regimes {\color{black} when $d \geq 3$.}


\section{Potential operators as pseudo-differential operators}\label{sect:2} 

\subsection{${h}$-pseudodifferential operators}\label{sec:3.3}
Let us consider the manifold $M = \mathbb{R}^{2d}$ or $M = T^{*}X$, with the symplectic form $\displaystyle{\omega = \sum_{i = 1}^{d}dx_i \wedge d\xi_i}$, where $X$ is a $d-$dimensional closed manifold. The 
${h}-$pseudo differential operators acting on the Hilbert space $\mathcal{H} = L^2(\mathbb{R}^d)$ (or $L^2(X)$) give semiclassical operators. To start with, we let ${{\mathcal{S}}}^m(\mathbb{R}^{2d})$ be the H\"ormander class (symbol class) of order $m$ whose elements are functions $f$ in the
space $C^{\infty}(\mathbb{R}^{2d})$ such that, for $m \in \mathbb{R}$, 
\begin{equation}\label{eq:class}
|\partial^{\alpha}_{(x, \xi)}f| \leq C_{\alpha}\langle(x, \xi)\rangle^{m},\ \ (x,\xi)\in\mathbb{R}^{2d},
\end{equation}
for every $\alpha \in \mathbb{N}^{2d}$. Here, $\langle z \rangle := (1+|z|^2)^{\frac{1}{2}}$. 
\begin{Definition}
    Let $f \in {{\mathcal{S}}}^m(\mathbb{R}^{2d})$. The  \emph{${h}-$pseudodifferential operators}  of $f$ are given on the Schwartz space ${\tilde{\mathcal{S}}}(\mathbb{R}^d)$ by the expressions: 
    \begin{eqnarray*}
 \text{(Left)} \quad  (\mathrm{Op}^L_{f,{h}}(u))(x)&:=& \frac{1}{(2\pi{h})^d}\int_{\mathbb{R}^d}\int_{\mathbb{R}^d}\exp\left(\frac{\mathrm{i}}{{h}}(x-y)\cdot \xi\right) f\left(y, \xi\right) u(y) \, dy \, d\xi; \\
    \text{(Weyl)} \quad  (\mathrm{Op}^W_{f,{h}}(u))(x)&:=& \frac{1}{(2\pi{h})^d}\int_{\mathbb{R}^d}\int_{\mathbb{R}^d}\exp\left(\frac{\mathrm{i}}{{h}}(x-y)\cdot \xi \right) f\left(\frac{x+y}{2}, \xi\right) u(y) \, dy \, d\xi; \\
 \text{(Right)} \quad  (\mathrm{Op}^R_{f.{h}}(u))(x)&:=& \frac{1}{(2\pi{h})^d}\int_{\mathbb{R}^d}\int_{\mathbb{R}^d}\exp\left(\frac{\mathrm{i}}{{h}}(x-y)\cdot \xi\right) f\left(x, \xi\right) u(y) \, dy \, d\xi.
    \end{eqnarray*}
\end{Definition}
 \emph{${h}-$pseudodifferential operators} also give rise to semiclassical operators on $M = T^*X$,($X$ is a closed $d-$dimentional manifold). Let $X$ be covered by a collection of smooth charts $\left\{U_1, \cdots, U_\ell\right\}$, such that each $U_i$, $1\leq i\leq \ell$, is a convex bounded domain of $\mathbb{R}^d$. There exists a partition of unity $\chi_1^2, \cdots, \chi_\ell^2$ which is subordinate to the cover $\left\{U_1, \cdots, U_\ell\right\}$. Let ${\mathcal{S}}^m(T^{*}X)$ be the space of functions $f$ in the
 space $C^{\infty}(T^*X)$ such that, for $m \in \mathbb{R}$, 
 \begin{equation}
 |\partial^{\alpha}_x\partial^{\beta}_{\xi}f(x, \xi)| \leq C_{\alpha, \beta}\langle(\xi)\rangle^{m-|\beta|},
 \end{equation}
 for every $\alpha, \beta \in \mathbb{N}^{n}$. Define the operator on $X$ to be 
 \begin{eqnarray}\label{eq:pseudoX}
\mathrm{Op}^{L/W/R}_{f,{h}}(u):= \sum_{j=1}^{\ell} \chi_j\cdot {(\mathrm{Op}^{L/W/R} )}_{f,{h}}^j(\chi_ju), \ \ u \in C^{\infty}(X), \end{eqnarray}
where ${(\mathrm{Op}^{L/W/R})}_{{h}}^j(f)$ are the pseudodifferential operators on $U_j$ with the principal symbol $f\chi_j^2$.  Following \cite{PPV14}, we have that the operators $\mathrm{Op}^{L/W/R}_{{h}}(f)$ are all pseudodifferential operator on $X$ with the principal symbol $f$.
\begin{Proposition}
    Let ${\mathcal{S}}^m(T^{*}X)$ be a H\"ormander class, $\mathbf{I} = (0, 1], h \in \mathbf{I}$,
    and $\mathcal{H}_{{h}}  = L^2(X)$ (independent of ${h}$). Then the pseudodifferential operators on $X$, namely all of the above quantizations $\mathrm{Op}^{L/R}_{h}(f)$ and $\mathrm{Op}^W_{h}(f)$ defined in \eqref{eq:pseudoX}, form a space of semiclassical operators. 
\end{Proposition}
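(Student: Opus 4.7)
The plan is to reduce the statement to the established local theory of $h$-pseudodifferential operators on $\mathbb{R}^d$ and then to use the charts $\{U_j\}$ together with the partition of unity $\{\chi_j^2\}$ to patch local estimates into global ones on $X$. Concretely, I would verify the defining axioms of a space of semiclassical operators in the sense used by the paper: (i) uniform $L^2(X)$-boundedness for $h \in \mathbf{I}$; (ii) a composition law with an asymptotic expansion whose leading term is the pointwise product of symbols; (iii) a well-defined principal symbol map $\mathcal{S}^m(T^*X) \to C^\infty(T^*X)$, invariant under the choice of quantization $L/W/R$ modulo $h\,\mathrm{Op}(\mathcal{S}^{m-1})$; and (iv) closure under formal adjoints, all in a way that is independent of the choices of chart cover and partition of unity modulo a lower-order residual calculus.

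First I would invoke the Calder\'on--Vaillancourt-type estimates on $\mathbb{R}^d$: for $f \in \mathcal{S}^0(\mathbb{R}^{2d})$ each of $\mathrm{Op}^L_{f,h}$, $\mathrm{Op}^W_{f,h}$, $\mathrm{Op}^R_{f,h}$ is bounded on $L^2(\mathbb{R}^d)$ uniformly in $h \in (0,1]$ by a finite number of seminorms of $f$; for $m > 0$ the same holds after dividing by the relevant elliptic weight. Combined with the standard local Moyal composition and adjoint formulas, this already gives the semiclassical calculus on each chart $U_j$. Since $\chi_j$ is smooth and compactly supported in $U_j$, the term $\chi_j \cdot (\mathrm{Op}^{L/W/R})^j_{f,h}(\chi_j \,\cdot\,)$ extends by zero to a bounded operator on $L^2(X)$, and uniform $L^2(X)$-boundedness follows by summing the finitely many such contributions.

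Next, to see that the principal symbol of \eqref{eq:pseudoX} is well-defined as a function on $T^*X$, I would use the symplectic invariance of the Weyl quantization modulo $h\,\mathrm{Op}(\mathcal{S}^{m-1})$: under a change of coordinates the Weyl quantization of $f$ agrees with that of its pull-back up to a remainder in $h\,\mathrm{Op}(\mathcal{S}^{m-1})$. The differences $\mathrm{Op}^L_{f,h} - \mathrm{Op}^W_{f,h}$ and $\mathrm{Op}^R_{f,h} - \mathrm{Op}^W_{f,h}$ also lie in $h\,\mathrm{Op}(\mathcal{S}^{m-1})$, so all three quantizations share the same principal symbol $f$. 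The composition $\mathrm{Op}^W_{f,h} \circ \mathrm{Op}^W_{g,h}$ is obtained locally via the Moyal star product $f \star_h g = fg + O(h)$, and the cross terms $\chi_j(\cdots)\chi_k$ with $j \ne k$ either vanish modulo $O(h^\infty)$ (when $\mathrm{supp}\,\chi_j \cap \mathrm{supp}\,\chi_k = \emptyset$) or can be expanded by stationary phase into the same residual calculus. Adjoints are handled analogously since $(\mathrm{Op}^W_{f,h})^* = \mathrm{Op}^W_{\overline{f},h}$ exactly, while the $L/R$ adjoints differ from it only by an $O(h)$ symbol correction.

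The step I expect to be the main obstacle is the bookkeeping of chart overlaps: when $U_j \cap U_k \neq \emptyset$, I need to check that performing the local calculus in the $j$-chart versus the $k$-chart produces the same operator modulo $h\,\mathrm{Op}(\mathcal{S}^{m-1})$, which requires the symplectic-invariance argument for the Weyl quantization together with a careful Taylor expansion of the change-of-variables phase against the cutoffs $\chi_j\chi_k$. Once this invariance is controlled and the residual calculus is shown to be stable under composition and adjoint, the conclusion that $\{\mathrm{Op}^{L/W/R}_{f,h}\}_{f \in \mathcal{S}^m(T^*X)}$ forms a space of semiclassical operators on $L^2(X)$ reduces, as indicated in the statement and in \cite{PPV14}, to the by-now classical local theory on $\mathbb{R}^d$.
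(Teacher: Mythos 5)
Your overall strategy is sound and is essentially the route the paper takes, just written out in full: the paper's proof is a two-line reduction that cites \cite{PPV14} for the Weyl quantization and then observes that $\mathrm{Op}^{L/R}_{f,h}$ differ from $\mathrm{Op}^W_{f,h}$ only at order $h$ (via $\exp(\pm \mathrm{i}\frac{h}{2}\partial_x\partial_\xi)$), so the same conclusion (via Beals' criterion) transfers. Your chart-patching, Calder\'on--Vaillancourt bounds, Moyal composition, and the symplectic-invariance-modulo-$h\,\mathrm{Op}(\mathcal{S}^{m-1})$ argument for chart overlaps are precisely the content of the result you would be re-deriving rather than citing; the key shared step in both arguments is that all three quantizations agree modulo $h\,\mathrm{Op}(\mathcal{S}^{m-1})$ and hence share the principal symbol $f$.

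There is, however, one concrete mismatch with the paper's actual definition of a space of semiclassical operators. That definition requires three things of the principal symbol map: normalization $\sigma(Id)=1$, the product formula $\sigma(P\circ Q)=\sigma(P)\sigma(Q)$, and \emph{weak positivity} ($\sigma(P)\geq 0$ implies $P\geq -\epsilon(h)$ with $\epsilon(h)\to 0$). Your list of axioms (uniform $L^2$-boundedness, composition, principal symbol, adjoints) covers the first two but silently drops the third, and replaces it with properties ($L^2$-boundedness, closure under adjoints) that are not part of the definition. Weak positivity is the one axiom that does not follow from Calder\'on--Vaillancourt plus the Moyal expansion alone: it is the semiclassical G\aa{}rding inequality, which needs a separate argument (e.g.\ anti-Wick/positive quantization comparison, or writing $f = (\sqrt{f+\delta})^2-\delta$ and using the composition calculus). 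It is standard and compatible with your toolkit, but as written your proof does not establish it, and without it you have not verified that the quantizations form a space of semiclassical operators in the sense the paper uses.
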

\begin{proof} We refer the readers to \cite{PPV14} for a proof of this theorem for $\mathrm{Op}^W_{f,h}$.
It is noted that after applying the operator $\exp\left(\pm i \frac{h}{2} \partial_x \partial_\xi\right)$, the Weyl quantization $\text{Op}^{W}_{f,h}$ and left/right quantizations $\text{Op}^{L/R}_{f,h}$ differ only in the higher order term. Hence, one can conclude that the Beal's criterion applies to $\mathrm{Op}^W_{f,h}$ if and only if it applies to $\text{Op}^{L/R}_{f,h}$, which readily completes the proof. 
    \end{proof}

From now on, whenever we do not specify whether it is left, right or Weyl, we presume  $\text{Op}_{f,h} := \text{Op}^R_{f,h}$  is the right quantization.   We notice that Weyl qanitization is symmetric in the $L^2$ metric by definition. In fact, if we do not specify the cover $\{U_i \}_{1\leq i \leq l}$, an operator so defined (via any of the quantization $\mathrm{Op}^{L/W/R}_{f,{h}}$) is unique up to $h \Phi \text{SO}^{m-1}_h$ if $f \in \mathcal{S}^m(T^*(\partial D))$) belonging to the symbol class of order $m$.

\subsection{Geometric description of $\partial D$}
For the subsequent need, we briefly introduce the geometric description of $D\subset\mathbb{R}^d$. Let $\mathbb{X}: \mathbf{u} = (u_1, u_2, ..., u_{d-1})\in U \subset \mathbb{R}^{d-1} \rightarrow \mathbb{X}(u)\in\partial D \subset \mathbb{R}^{d}$ be a regular parametrization of the surface $\partial D$ and let $\mathbb{X}_j := \f{\p \mathbb{X}}{\p u_j}$, $j=1,2,\ldots, d-1$.
We denote $\times_{j=1}^{d-1} \mathbb{X}_j = \mathbb{X}_1 \times \mathbb{X}_2 ... \times \mathbb{X}_{d-1} $. Since $\mathbb{X}$ is regular, we know $\times_{j=1}^{d-1} \mathbb{X}_j$ is
non-zero, and the normal vector $\nu := \times_{j=1}^{d-1} \mathbb{X}_j / |\times_{j=1}^{d-1} \mathbb{X}_j | $
is well-defined. Let $\bar{\nabla}$ be the standard covariant derivative on the ambient space $\mathbb{R}^{d}$, and $\textbf{II}$ be the second fundamental form given by
\beqn
\textbf{II}(\mathbf{v},\mathbf{w})= - \langle \bar{\nabla}_{\mathbf{v}} \nu, \mathbf{w} \rangle \nu =  \langle \nu,  \bar{\nabla}_{\mathbf{v}}  \mathbf{w} \rangle \nu,\ (\mathbf{v}, \mathbf{w})\in T(\p D) \times T(\p D). \notag
\eqn
Define
\beqn
\mathscr{A}(x) := ( \mathscr{A}_{ij}(x) ) =  \langle \textbf{II}_x (\mathbb{X}_i ,\mathbb{X}_j), \nu_x \rangle \notag \,,\ \ x \in \partial D.
\eqn
Let $g=(g_{ij})$ be the induced metric tensor on $\partial D$ and $(g^{ij}) = g^{-1}$. 
Finally, we write $\mathscr{H}(x), x \in \partial D$ as the mean curvature satisfying 
$$ \text{tr}_{g(x)} (\mathscr{A}(x)) :=  \sum_{i,j = 1}^{d-1} g^{ij}(x) \mathscr{A}_{ij}(x) := (d-1) \mathscr{H} (x). $$ 
Throughout the rest of the paper, we always assume $\mathscr{A}(x) \neq 0$ for all $x \in \partial D$.

\subsection{Principal symbols of layer potential operators}

Throughout the rest of the paper, with a bit abuse of notations, we shall also denote by $\mathcal{S}_{\partial D}$ the single-layer potential operator which is given in \eqref{eq:sl1} but with $x\in\partial D$. This should be clear from the context in what follows. We let $\mathcal{K}_{\partial D}$ signify the $L^2(\partial D, d \sigma)$-adjoint of the NP operator $\mathcal{K}^*_{\partial D}$. $\mathcal{K}_{\partial D}^*$ is symmetrizable on $H^{-1/2}(\partial D , d \sigma)$ (cf., e.g., \cite{putinar}) due to the following Kelley symmetrization identity:
\begin{equation}\label{eq:s3}
\mathcal{S}_{\partial D}  \, \mathcal{K}^*_{\partial D} =   \mathcal{K}_{\partial D} \,  \mathcal{S}_{\partial D}.
\end{equation}

In this section, we treat the layer potential operators as pseudodifferential operators {\color{black} when $d \geq 3$} and derive several important properties, especially their principal symbols. In fact, the special three-dimensional case was treated in \cite{weyl1,weyl2}, whereas the general case was considered in \cite{ACL,ACL2} {\color{black} as well as 
Chapter 12, Section C, Proposition C1 in \cite{Taylor} and Proposition 2.2 in \cite {ola}.} Since this result forms the starting point for our subsequent analysis, we discuss the main ingredients as follows. Before that, we introduce a slightly more relaxed symbol class $\tilde{\mathcal{S}}^m(T^*(\partial D))$ (compared to ${\mathcal{S}}^m(T^*(\partial D))$):
\[
\begin{split}
 \bigcup_{i} U_i = \partial D \,,\quad & F_i : \pi^{-1} (U_i) \rightarrow U_i \times \mathbb{R}^{d-1}\,,  \quad  \sum_i \psi_i^2 = 1 \,, \quad \text{supp}(\psi_i) \subset U_i\, ; \\
 \widetilde{S}^m (U_i \times \mathbb{R}^{d-1}\backslash\{0\} )  := & \bigg\{ a:  U_i \times ( \mathbb{R}^{d-1} \backslash \{0\} )  \rightarrow \mathbb{C} \, ;\\
&\qquad a \in \mathcal{C}^{\infty} (  U_i \times ( \mathbb{R}^{d-1} \backslash \{0\} )  ) \, , \, | \partial_\xi^\alpha \partial_x^{\beta} a (x,\xi) | \leq C_{\alpha, \beta} ( |\xi| )^{ m - |\alpha| } \bigg\};\\
\widetilde{S}^m (T^*(\partial D) )  :=&  \bigg \{ a: T^*(\partial D)  \backslash \partial D \times \{0\}  \rightarrow \mathbb{C} \, ; \\
 &\qquad \, a = \sum_{i} \psi_i F_i^* \left(   [F_i^{-1}]^*(\psi_i) \, a_i  \right) , a_i \in \widetilde{S}^m (U_i \times \mathbb{R}^{d-1} \backslash \{0\} ) \bigg \};
\end{split}
\]
where $\pi: T^*(\partial D) \rightarrow \partial D$ is the bundle projection. Similar to our discussion in Section~\ref{sec:3.3},
for a symbol $a \in \tilde{\mathcal{S}}^m(T^*(\partial D))$, we can define $\mathrm{Op}_{a,h}$ to be the $h$-pseudodifferential operator. In the sequel, we let $\widetilde{\Phi \text{SO}}_h^{m}$ denote the class of pseudodifferential operators of order $m$ associated with $\tilde{\mathcal{S}}^m(T^*(\partial D))$. We also let $\widetilde{\Phi \text{SO}}^{m} := \widetilde{\Phi \text{SO}}_1^{m}$, namely $h=1$.

\begin{Theorem}
Assume that $\partial D \in C^{\infty}$. {\color{black} When $d \geq 3$,} the operators $\mathcal{K}^*_{\partial D}$ and $\mathcal{S}_{\partial D}$ are pseudodifferential operators of order $-1$ with their symbols given as follows in the geodesic normal coordinate around each point $x$:
\begin{equation}\label{eq:s1}
\begin{split}
p_{\mathcal{K}^*_{\partial D}}(x,\xi) =& p_{\mathcal{K}^*_{\partial D} , -1}(x,\xi)  +  \mathcal{O}(|\xi|^{-2})\\
   =&  (d-1) \mathscr{H}(x) \,  |\xi|^{-1} -  \langle \mathscr{A}(x)  \, \xi, \, \xi \rangle \, |\xi|^{-3}
+  \mathcal{O}(|\xi|^{-2}) \, ,
\end{split}
\end{equation}
and
\begin{equation}\label{eq:s2}
\begin{split}
p_{\mathcal{S}_{\partial D}}(x,\xi) = p_{\mathcal{S}_{\partial D},-1}(x,\xi) +  \mathcal{O}(|\xi|_{g(x)}^{-2}) =  \frac{1}{2} |\xi|_{g(x)}^{-1} +  \mathcal{O}(|\xi|_{g(x)}^{-2}) \,,
\end{split}
\end{equation}
where the asymptotics $\mathcal{O}$ depends on $\| \mathbb{X} \|_{\mathcal{C}^2} $. The result in \eqref{eq:s1} holds also for $\mathcal{K}_{\partial D}$ if only the leading-order term is concerned. 

Using the symmetrization identity \eqref{eq:s3} and the self-adjointness of $\mathcal{S}_{\partial D}$, we have
\begin{equation}\label{eq:s4}
\begin{split}
\mathcal{K}^*_{\partial D} =& |\mathcal{D}|^{-1} \Bigg \{ (d-1) \mathscr{H}(x) \Delta_{\partial D}\\
&\hspace*{1cm} - \sum_{i,j,k,l = 1}^{d-1} \frac{1}{ \sqrt{|g(x)|} } \partial_i g^{ij}(x) \sqrt{|g(x)|} \mathscr{A}_{jk}(x) g^{kl}(x) \partial_{l} \Bigg\} |\mathcal{D}|^{-2} \text{ mod } \widetilde{\Phi\mathrm{SO}}^{-2},\medskip \\
\mathcal{S}_{\partial D} =& \frac{1}{2} |\mathcal{D}|^{-1}  \text{ mod } \widetilde{\Phi\mathrm{SO}}^{-2},
\end{split}
\end{equation}
where $ \Delta_{\partial D}$ is the surface Laplacian of $\partial D$, and $|\mathcal{D}|^{-1} := \mathrm{Op}_{|\xi|_{g(x)}^{-1}} $. Moreover, we have $\mathcal{K}^*_{h,\partial D} := \frac{1}{h} |\mathcal{D}|^{-\frac{1}{2}}   \mathcal{K}^*_{\partial D} |\mathcal{D}|^{\frac{1}{2}}$, which 
is self-adjoint up to $\text{mod } h \widetilde{\Phi\mathrm{SO}}^{-2}_h$. 
\end{Theorem}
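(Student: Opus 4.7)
The plan is to derive the stated principal symbols and operator identities by a careful local analysis near each boundary point $x_0 \in \partial D$. I would fix $x_0$ and parametrize a neighborhood of it in $\partial D$ as a graph $y = (y_1,\ldots,y_{d-1}) \mapsto y + \varphi(y)\nu(x_0)$ over the tangent plane $T_{x_0}(\partial D)$, choosing $\varphi$ so that $(y_1,\ldots,y_{d-1})$ are geodesic normal coordinates, i.e.\ $\varphi(0)=0$, $\nabla\varphi(0)=0$ and $D^2\varphi(0) = \mathscr{A}(x_0)$. In these coordinates, the two operators become oscillatory integrals whose kernels are explicit weakly singular functions of the Euclidean distance, and their pseudodifferential symbols can be extracted by a standard expansion-and-Fourier-transform procedure, cross-checked against the computations in \cite{Taylor,ola,ACL,ACL2}.

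For $\mathcal{S}_{\partial D}$, the kernel is a constant multiple of $|x-y|^{2-d}$ with $|x-y|^2 = |\tilde y|_{g(x)}^2 + O(|\tilde y|^4)$, where $\tilde y$ denotes the tangential displacement; the $(d-1)$-dimensional Fourier transform of the leading homogeneous function $|\tilde y|^{2-d}$ gives the stated principal symbol $\tfrac{1}{2}|\xi|_{g(x)}^{-1}$. For $\mathcal{K}^*_{\partial D}$ the Taylor expansion of the numerator yields
\begin{equation*}
\langle x-y, \nu(x)\rangle \;=\; \tfrac{1}{2}\langle \mathscr{A}(x)\tilde y, \tilde y\rangle + O(|\tilde y|^3),
\end{equation*}
so the kernel behaves like $\tfrac{1}{2\varpi_d}\langle \mathscr{A}(x)\tilde y,\tilde y\rangle |\tilde y|^{-d}$ up to tangential-normal corrections. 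Computing its Fourier transform splits the quadratic form $\mathscr{A}$ into its $g$-trace and traceless parts, producing $(d-1)\mathscr{H}(x)|\xi|^{-1}$ from the trace (via $\mathrm{tr}_{g}\mathscr{A} = (d-1)\mathscr{H}$) and $-\langle \mathscr{A}(x)\xi,\xi\rangle |\xi|^{-3}$ from the traceless anisotropy. The corresponding leading-order statement for $\mathcal{K}_{\partial D}$ follows since $\mathcal{K}_{\partial D}$ is the $L^2$-adjoint and the principal symbol of the adjoint is the complex conjugate of the symbol, which is real here.

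Once the symbols are in hand, the formulas in \eqref{eq:s4} follow by matching: $\mathcal{S}_{\partial D} = \tfrac{1}{2}|\mathcal{D}|^{-1}$ modulo $\widetilde{\Phi\mathrm{SO}}^{-2}$ by direct identification, while the expression for $\mathcal{K}^*_{\partial D}$ is obtained by writing its principal symbol as $|\xi|_{g}^{-1}\big\{(d-1)\mathscr{H}(x)|\xi|_{g}^{2} - \langle \mathscr{A}(x)\xi,\xi\rangle\big\}|\xi|_{g}^{-2}$ and recognizing the bracketed factor as the principal symbol of the invariant divergence-form differential operator $(d-1)\mathscr{H}(x)\Delta_{\partial D} - \sum_{i,j,k,l}\tfrac{1}{\sqrt{|g|}}\partial_{i}g^{ij}\sqrt{|g|}\mathscr{A}_{jk}g^{kl}\partial_{l}$; the metric weights $\sqrt{|g|}$ are the natural choice making this operator formally self-adjoint on $(\partial D, d\sigma)$. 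For the semiclassical self-adjointness, the Kelley identity \eqref{eq:s3} together with self-adjointness of $\mathcal{S}_{\partial D}$ shows that $\mathcal{S}_{\partial D}^{1/2}\mathcal{K}^*_{\partial D}\mathcal{S}_{\partial D}^{-1/2}$ is symmetric; substituting $\mathcal{S}_{\partial D} = \tfrac{1}{2}|\mathcal{D}|^{-1}$ modulo smoothing of order $-2$ and rescaling by $h^{-1}$ yields the claim that $\mathcal{K}^*_{h,\partial D}$ is self-adjoint modulo $h\widetilde{\Phi\mathrm{SO}}_{h}^{-2}$.

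The main obstacle is the subleading Fourier analysis in the expansion of $\mathcal{K}^*_{\partial D}$: one must track corrections coming simultaneously from (i) the surface area element $\sqrt{|g(y)|}\,dy$, (ii) the next-order Taylor terms of $\langle x-y,\nu(x)\rangle$ beyond the $\mathscr{A}$-contribution, and (iii) the expansion of $|x-y|^{-d}$ itself, which mixes tangential and normal displacements. The fact that all these corrections reorganize into the clean expression $(d-1)\mathscr{H}(x)|\xi|^{-1} - \langle \mathscr{A}(x)\xi,\xi\rangle|\xi|^{-3}$ is the technical heart of the proof, and it is precisely the geodesic normal coordinate choice that produces the necessary cancellations so that the remainder is genuinely $\mathcal{O}(|\xi|^{-2})$. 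A secondary obstacle is verifying that the recognition of the bracketed differential operator in \eqref{eq:s4} is correct as an operator identity modulo $\widetilde{\Phi\mathrm{SO}}^{-2}$ and not merely at the level of principal symbols, which requires comparing the Weyl quantization of the written symbol to the divergence-form operator using the symbolic calculus.
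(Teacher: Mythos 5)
The paper itself gives no proof of this theorem: it is quoted from the literature (\cite{ACL,ACL2}, Chapter 12, Section C of \cite{Taylor}, Proposition 2.2 of \cite{ola}, and \cite{weyl1,weyl2} for $d=3$), so there is no in-paper argument to compare against. Your sketch is precisely the standard derivation used in those references — graph/geodesic normal coordinates at $x_0$, Taylor expansion of $\langle x-y,\nu(x)\rangle$ and of $|x-y|^{-d}$, and the Fourier transform of the homogeneous kernel $\langle \mathscr{A}\tilde y,\tilde y\rangle|\tilde y|^{-d}$ on $\mathbb{R}^{d-1}$, which indeed produces the combination $\delta_{ij}|\xi|^{-1}-\xi_i\xi_j|\xi|^{-3}$ and hence the trace term $(d-1)\mathscr{H}|\xi|^{-1}$ together with $-\langle\mathscr{A}\xi,\xi\rangle|\xi|^{-3}$ — so the approach is correct and essentially the intended one.

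One caveat on the step where you ``recognize the bracketed factor'': with the usual convention that $\Delta_{\partial D}=\frac{1}{\sqrt{|g|}}\partial_i(\sqrt{|g|}\,g^{ij}\partial_j)$ has principal symbol $-|\xi|_g^2$, the operator $(d-1)\mathscr{H}\Delta_{\partial D}-\sum\frac{1}{\sqrt{|g|}}\partial_i g^{ij}\sqrt{|g|}\mathscr{A}_{jk}g^{kl}\partial_l$ has principal symbol $-(d-1)\mathscr{H}|\xi|_g^2+\langle\mathscr{A}g^{-1}\xi,g^{-1}\xi\rangle$, i.e.\ the \emph{negative} of the bracket you wrote; so either \eqref{eq:s4} is to be read with the opposite sign convention for $\Delta_{\partial D}$ or an overall sign must be inserted to be consistent with \eqref{eq:s1}. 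This discrepancy is inherited from the statement itself rather than introduced by you, but your proof should make the convention explicit rather than assert the identification by inspection.
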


Finally, we note that $\left(\tilde{\lambda^i}^2, \phi^i\right)$ is an eigenpair of $ \mathcal{K}^*_{\partial D} $ if and only if
$\left(\frac{\tilde{\lambda^i}}{h},  |D|^{-\frac{1}{2}}  \phi^i\right)$ is an eigenpair of $  \mathcal{K}^*_{h,\partial D}  $.
Hencefore, we write
\begin{equation}\label{eq:eg1}
 ( \tilde{\lambda^i}^2(h),  \phi^i(h) ) := \left(\frac{\tilde{\lambda^i}^2}{h},  |D|^{-\frac{1}{2}}  \phi^i\right).
 \end{equation}

\section{Classical and Quantum Integrable Systems} 
In this section, we give a brief review of the classical and quantum integrable systems, which shall be needed in our subsequent analysis.
\subsection{Classical integrable systems}
Let $M$ be a $2d-$dimensional symplectic manifold with a non-degenerate $2-$form $\omega$.  
\begin{Definition}\label{def:comp-int} 
    A \emph{completely integrable Hamiltonian system} $(M, \omega, F)$ on a $2d$-di\-men\-sio\-nal symplectic manifold $(M, \omega)$ is given by a set of $d$ smooth functions $H_1, \ldots, H_d \in C^\infty(M)$, that are functionally independent and Poisson-commuting, i.e., 
    \[
    \{H_i, H_j\}  := - \omega(X_{H_i}, X_{H_j}) = 0, \qquad i,j \in \{1, \ldots, d\} ,
    \]
where we recall that $X_{H_i}$ is the symplectic gradient vector field given by
\beqnx
\iota_{X_{H_i}} \, \omega =  d H_i \,.
\eqnx
    The map $F=(H_1,\ldots,H_d): M \to \mathbb{R}^d$ is called the {\em moment map}.  
\end{Definition} 

The level sets of the moment map in a completely integrable system form a Lagrangian foliation $F: M \rightarrow \mathbb{R}^d$.

\begin{Definition}\label{def:reg-sing-pt}
    Let $F= (H_1,\ldots,H_d)$ be the moment map of a completely integrable system on $\mathbb{R}^{2d}$.  
    A point $\mathfrak{m} \in \mathbb{R}^{2d}$ is said to be a \emph{regular point} if  
    $$\mathrm{rank}\{X_{H_1}(\mathfrak{m}), \ldots, X_{H_d}(\mathfrak{m})\} = d \ . $$ 
    If  
    \[
    \mathrm{rank}\{X_{H_1}(\mathfrak{m}), \ldots, X_{H_d}(\mathfrak{m})\} = r, \qquad 0 \leq r < d \ , 
    \] 
    then the point $\mathfrak{m} \in \mathbb{R}^{2d}$ is said to be a \emph{singular point of rank $r$}. The value $F(\mathfrak{m}) \in \mathbb{R}^d$ is called a \emph{regular value} if $\mathfrak{m}$ is a regular point and a \emph{singular value} if $\mathfrak{m}$ is a singular point. 
\end{Definition}

Suppose that  $\mathfrak{m} \in \mathbb{R}^{2d}$ is a singular point of rank $r$  for a completely integrable system $F= (H_1,\ldots,H_d)$ on $\mathbb{R}^{2d}$. After replacing the $\displaystyle{H_{i}\text{'s}}$ with invertible linear combinations of $H_j$'s if necessary, we may assume that 
$$\displaystyle{X_{H_{1}}(\mathfrak{m}) = \cdots = X_{H_{d-r}}(\mathfrak{m}) = 0},$$ and the $X_{H_i}$'s are linearly independent for $d-r < i \leq d$. 
The quadratic parts of $H_1, \ldots, H_{d-r}$ form an abelian subalgebra $\mathfrak{s}_\mathfrak{m}$ of the Lie algebra of quadratic forms, with the Poisson bracket as the Lie bracket.  

\begin{Definition}    \label{def:non-deg-pt}A singular point $\mathfrak{m}$ or rank $r$ is said to be a \emph{non-degenerate singular point of rank $r$} if the sub-algebra $\mathfrak{s}_\mathfrak{m}$ is a Cartan sub-algebra  of the Lie algebra $\mathfrak{sp}(2d-2r, \mathbb{R})$ of the symplectic group $\mathrm{Sp}(2d-2r, \mathbb{R})$. 
\end{Definition}

\begin{Remark} 
    In an obvious way, Definitions \ref{def:reg-sing-pt} and \ref{def:non-deg-pt} can be carried over to a completely integrable system $(M, \omega, F)$  on a general $2d$-dimensional symplectic manifold. 
    
\end{Remark}

In 1936, Williamson \cite{Williamson} classified the Cartan subalgebras of the Lie algebra of the symplectic group. 

\begin{Theorem}[{\bf Williamson}]\label{thm:williamson}
    Let $\mathfrak{s} \subset \mathfrak{sp}(2l; \mathbb{R})$ be a Cartan subalgebra. Then there exist canonical coordinates $(q_1, \ldots, q_l, p_1, \ldots, p_l)$ for $\mathbb{R}^{2l}$, a triple $(k_{el}, k_{hy}, k_{ff}) \in \mathbb{Z}^3_{\geq 0}$ satisfying the condition $k_{el}+k_{hy}+2 k_{ff} = l$, and a basis $f_1, \ldots,f_l$ of $\mathfrak{s}$ such that 
    \begin{align*}
    f_i &= \frac{q_i^2+p_i^2}{2},  && \ \, i = 1, \ldots, k_{el} , \hspace*{20mm} \\[2mm]
    f_j &= q_jp_j,  && \ \, j = k_{ell}+1, \ldots, k_{el}+k_{hy}, \\[2mm]
    f_k &= 
    \left\{
    \begin{array}{l}
    \!q_kp_k+q_{k+1}p_{k+1},  \\[2mm]
    \!q_kp_{k+1}-q_{k+1}p_k, 
    \!\end{array}
    \right. 
    &&
    \begin{array}{l}
    k = k_{el}+k_{hy}+1, k_{el}+k_{hy}+3, \ldots, l-1 , \\[2mm]
    k = k_{el}+k_{hy}+2, k_{el}+k_{hy}+4, \ldots, l .
    \!\end{array}
    \end{align*}
    Additionally, two Cartan subalgebras $\mathfrak{s}, \mathfrak{s}^{'} \subset \mathfrak{sp}(2l; \mathbb{R})$ 
    are conjugate if and only if their corresponding triples are equal. 
    
    The elements of the basis of $\mathfrak{s}$ are called \emph{elliptic blocks}, \emph{hyperbolic blocks} or \emph{focus-focus blocks} according to whether they are of the form $\frac{q_i^2+p_i^2}{2}$, $q_jp_j$ or a pair  $q_kp_k+q_{k+1}p_{k+1}, q_kp_{k+1}-q_{k+1}p_k$, respectively. 
\end{Theorem}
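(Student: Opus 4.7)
The plan is to work with $\mathfrak{sp}(2l;\R)$ through its standard identification with the Lie algebra of infinitesimal linear symplectomorphisms of $(\R^{2l},\omega_0)$: each quadratic Hamiltonian $f$ corresponds to a linear endomorphism $A_f$ via $\iota_{A_f}\omega_0 = df$, and the Poisson bracket equals the commutator of the $A_f$'s. Under this correspondence, a Cartan subalgebra $\mathfrak{s}\subset\mathfrak{sp}(2l;\R)$ gives a commuting family $\{A_f : f\in\mathfrak{s}\}$ of semisimple infinitesimal symplectomorphisms, of dimension $l$ (the rank of $\mathfrak{sp}(2l;\R)$).

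First I would simultaneously diagonalize the $A_f$'s on the complexification $\mathbb{C}^{2l}$. Commutativity and semisimplicity yield a joint weight decomposition $\mathbb{C}^{2l}=\bigoplus_{\chi} V_\chi$, with three structural constraints: (i) the complexified symplectic form pairs $V_\chi$ nontrivially only with $V_{-\chi}$ (infinitesimal symplecticity), (ii) complex conjugation sends $V_\chi$ to $V_{\bar\chi}$ (reality of $\mathfrak{s}$), and (iii) the zero weight cannot appear, since otherwise the centralizer of $\mathfrak{s}$ would strictly contain $\mathfrak{s}$, contradicting maximality. The joint orbits of the involutions $\chi\mapsto-\chi$ and $\chi\mapsto\bar\chi$ therefore partition the nonzero weights into exactly three possible types: a purely imaginary pair $\{\pm i\beta\}$, producing a $2$-dimensional real symplectic block that carries the Williamson form $(q^2+p^2)/2$; a real pair $\{\pm\alpha\}$, producing a $2$-dimensional block with form $qp$; and a genuinely complex quadruple $\{\pm\alpha\pm i\beta\}$, producing a $4$-dimensional block carrying the focus-focus pair $qp+q'p',\ qp'-q'p$. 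On each block, an adapted Darboux basis placing the distinguished generator in Williamson form is produced by elementary linear algebra, and dimension counting gives $k_{el}+k_{hy}+2k_{ff}=l$.

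To upgrade the normal form from a single generator to all of $\mathfrak{s}$, I would exploit that every $f'\in\mathfrak{s}$ preserves each real block, combined with the observation that the infinitesimally symplectic endomorphisms of an elliptic or hyperbolic block commuting with its distinguished Williamson generator form a $1$-dimensional abelian algebra spanned by that generator, while for a focus-focus block the corresponding commutant is $2$-dimensional and spanned by the focus-focus pair. Hence every $f'$ is automatically a linear combination of the prescribed $f_j$'s on each block, so that the list $f_1,\ldots,f_l$ actually spans $\mathfrak{s}$. Uniqueness of the triple $(k_{el},k_{hy},k_{ff})$ then follows because the three orbit types have incompatible eigenvalue signatures under conjugation in $\mathrm{Sp}(2l;\R)$: the pattern of purely imaginary, purely real and genuinely complex weights of a regular element of $\mathfrak{s}$ is a conjugation-invariant spectral datum that precisely recovers the triple.

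The main obstacle I anticipate is the \emph{simultaneous} Darboux-normalization step: the basis on each real block must be chosen so that every element of $\mathfrak{s}$, not only the chosen regular generator, takes Williamson shape at once. This requires delicate bookkeeping of signs, scales and orderings of the basis vectors within each block, especially inside focus-focus blocks where the $2$-dimensional commutant must be aligned with the pair $(qp+q'p',qp'-q'p)$ on the nose. Once the weight-space decomposition from the preceding paragraph is in place, this bookkeeping is elementary but notationally heavy, and is where most of the real work of a careful proof would sit.
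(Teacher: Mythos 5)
The paper does not prove this statement: it is Williamson's 1936 classification of Cartan subalgebras of $\mathfrak{sp}(2l;\mathbb{R})$, quoted with a citation to \cite{Williamson}, so there is no in-paper argument to compare against. Your sketch follows the standard modern proof and its skeleton is sound: pass from quadratic Hamiltonians to infinitesimally symplectic matrices, use that a Cartan subalgebra is abelian, self-centralizing and consists of semisimple elements to obtain a joint weight decomposition of $\mathbb{C}^{2l}$ with no zero weight, sort the weights into orbits under $\chi\mapsto-\chi$ and $\chi\mapsto\bar\chi$, and read off the elliptic, hyperbolic and focus-focus blocks. Two points need tightening. First, you implicitly assume every weight space is one-dimensional; this is not automatic from semisimplicity and must be extracted from maximality (if $\dim V_\chi\geq 2$, the centralizer of $\mathfrak{s}$ restricted to the corresponding real block is nonabelian, e.g.\ contains a copy of $\mathfrak{u}(m)$ or $\mathfrak{gl}(m;\mathbb{R})$, contradicting that $\mathfrak{s}$ equals its own centralizer); your block-by-block commutant computation in the third paragraph is stated only after the blocks have already been declared $2$- or $4$-dimensional, so the logical order should be reversed or the multiplicity-one claim argued separately. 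Second, for the focus-focus block the centralizer of the single generator $q_kp_k+q_{k+1}p_{k+1}$ inside $\mathfrak{sp}(4;\mathbb{R})$ is a copy of $\mathfrak{gl}(2;\mathbb{R})$, which is four-dimensional, not two; the two-dimensional commutant you need is that of a \emph{regular} element of the block (equivalently, of the pair acting simultaneously), whose four eigenvalues $\pm\alpha\pm\mathrm{i}\beta$ are distinct. With those repairs the argument, including the conjugacy criterion via the conjugation-invariant eigenvalue pattern of a regular element, goes through.
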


%
Given a completely integrable system $\bigl(M, \omega, F = (H_1,\ldots, H_d)\bigr)$. Suppose $\mathfrak{m} \in M$ is a non-degenerate singularity of rank $r$. Then with the help of Williamson's Theorem, locally one can write the Hamiltons $H_i$ as $f_i$ for $i = 1, \cdots, k_{el}+k_{hy}+2k_{ff}$, and $H_i = p_i$ for $i = k_{el}+k_{hy}+2k_{ff}+1, \cdots, k_{el}+k_{hy}+2k_{ff}+r = n$.  

\subsection{Quantum Integrable Systems}
Next, we would like to provide a tool for the discussion of the lift of the classical Hamiltonian system to its operator counterpart.   For this purpose, we define the quantum integrable system.

Let $M$ be a $2d-$dimensional symplectic manifold with a non-degenerate $2-$form $\omega$. 
Let $\mathbf{I} \subset (0, 1]$ be any set that accumulates at $0$. If $\mathcal{H}$ is a complex Hilbert space, we denote by $\mathcal{L(H)}$ the set of linear (possibly unbounded) self-adjoint operators on $\mathcal{H}$ with a dense domain. 
\begin{Definition}
 A space $\Psi$ of \emph{semiclassical operators} is a subspace of $\displaystyle{\prod_{{h} \in \mathbf{I}} \mathcal{L}(\mathcal{H}_{{h}})}$, containing the identity, and equipped with a weak principal symbol map, which is an $\mathbb{R}$-linear map
 \begin{equation}\label{eq:sc-op}
 \sigma: \Psi \mapsto \mathcal{C}^{\infty}(M; \mathbb{R}),
 \end{equation}
 with the following properties:
 \begin{enumerate}
     \item $\sigma(Id) = 1$; (normalization)
     \item if $P, Q \in \Psi$ and if $P \circ Q$ is well defined and is in $\Psi$, then $\sigma(P\circ Q) = \sigma(P)\sigma(Q)$; (product formula) 
     \item if $\sigma(P) \geq 0$, then there exists a function ${h} \mapsto \epsilon({h})$ tending to zero as ${h} \rightarrow 0,$ such that $P \geq -\epsilon({h})$, for all ${h} \in \mathbf{I}$. (wear positivity)
 \end{enumerate} 
If $\displaystyle{P = (P_{{h}})_{{h}\in \mathbf{I}}}$, then $\sigma(P)$ is called the \emph{principal symbol} of P.
\end{Definition}
Such a family of Hilbert spaces can be obtained e.g. by the Weyl quantization (which we will specify later) or the geometric quantization with
complex polarizations.
\begin{Definition}
    A \emph{quantum integrable system} on $M$ consists of $d$ semiclassical operators
  \[ P_1 =  (P_{1,h}), \cdots, P_d = (P_{d,h})\] 
    acting on $\mathcal{H}_{{h}}$ which commute, i.e., $[P_{i,h} , P_{j,h}] = 0$ for all $i, j \in \left\{1, 2, \cdots, d\right\}$, for all ${h}$ and whose principal symbols $f_1 := \sigma (P_{1}), \cdots, f_d := \sigma (P_{d})$ form a completely integrable system on $M$.
\end{Definition}

\begin{Definition}
  Suppose $P$ and $Q$ are commuting semiclassical operators on $\mathcal{H}_{{h}}$. Then the \emph{joint spectrum} of $(P_{{h}}, Q_{{h}})$ is the support of the joint spectral measure, which is denoted as $\Sigma(P_{{h}}, Q_{{h}})$. If $\mathcal{H}_{{h}}$ is a finite dimensional (or, more generally, when the joint spectrum is discrete), then 
  \begin{equation}{\label{eq:JS}}
  \Sigma(P_{{h}}, Q_{{h}}) = \left\{(\lambda_1, \lambda_2) \in \mathbb{R}^2 : \exists v \neq 0, P_{{h}}v = \lambda_1v, Q_{{h}}v = \lambda_2v \right\}.
  \end{equation}
  The joint spectrum of $P, Q$, denoted by $\Sigma(P,Q)$, is the collection of all joint spectra of $(P_{{h}}, Q_{{h}})$, ${h} \in \mathbf{I}.$
\end{Definition}
Suppose that $(P_{1,h}), \cdots, (P_{d,h})$ form a quantum integrable system on $M$. Then the \emph{joint spectrum of} $\Big((P_{1,h}), \cdots, (P_{d,h})\Big)$ is  
\begin{equation}\label{eq:jspect}
\displaystyle{\Sigma\Big((P_{1,h}), \cdots, (P_{d,h})\Big) := \left\{(\lambda_1, \cdots, \lambda_d) \in \mathbb{R}^d: \bigcap_{i = 1}^{d}\ker(P_{i,h}-\lambda_{i}I) \neq \{0\}\right\}}.
\end{equation}

We would like to remark that in case the operators $P_{i,h}$ are not bounded, the commuting property of the operators 
is understood in the strong sense: the spectral measures (obtained via the spectral
theorem as a projector-valued measure) of $P_{i,h}$ and $P_{j,h}$ commute.

\section{Generalized Weyl's law }\label{sect:3}

In this section, we recall the concept of generalized Weyl's law and quantum ergodicity from the pioneering works of Shnirelman \cite{erg1,erg11}, Zelditch \cite{erg32,erg33,erg34,erg35}, Colin de Verdiere \cite{erg2} and Helffer-Martinez-Robert \cite{erg4},  {\color{black} as well as \cite{Sogge} where a pointwise generalized Weyl's law of the Laplacian is proved,} and generalize them to the case when we have a quantum integrable system \cite{PelayoVuNgoc11, VuNgoc06, VuNgoc07} for our purpose.
 For our subsequent use, we would like to further generalize it to provide a more reinforced description for the quantum integrable system.

\subsection{Hamiltonian flows of principal symbols.}
Consider the Hamiltonian $H: T^*(\partial D)  \rightarrow \mathbb{R}$:
\begin{equation}\label{eq:hf1}
        H(x, \xi) := [ p_{\mathcal{K}^*_{\partial D},-1}(x,\xi)  ]^2 \geq 0 \,.
\end{equation}
Throughout the rest of the paper, we impose the following assumption in our study.

\vskip 2mm

\noindent \textbf{Assumption (A)}  We assume $\langle \mathscr{A}(x) \, g^{-1}(x) \, \omega \, ,\,  g^{-1}(x) \, \omega \rangle \neq (d-1) \mathscr{H}(x)$ for all $x \in \partial D$ and $\omega \in \{ \xi :  |\xi |_{g(x)}^2 = 1 \} \subset T_x^*(\partial D)$.

\vskip 2mm
\noindent Assumption (A) holds if and only if $\overline{ \{H = 1\} } \bigcap \left(  \partial D  \times \{0\} \right)  = \emptyset$, which is further equivalent to the condition that the Hamiltonian $H \neq 0$ everywhere {\color{black} and hence the ellipticity of $\mathcal{K}^*_{\partial D}$.  As explored in Corollary 2.3 in \cite{ola}, it is clear that strictly convexity of $D$ implies Assumption (A).  Meanwhile, as discussed in \cite{ACL},  (at least) when $d=3$, with a quick application of the Gauss-Bonnet theorem, it yields that Assumption (A) holds if and only if $D$ is strictly convex.}
With this, gazing at \eqref{eq:eigen1}, it can be directly inferred that $\phi \in \mathcal{C}^{\infty} (\partial D)$.
 {\color{black} In this article, we always assume the validity of Assumption (A).}

Next, we set $\rho(r) = 1-\exp(-r): \mathbb{R}_+\rightarrow\mathbb{R}$. It is realized that $\rho(r)\geq 0$ and $\rho'(r) > 0$ for all $\mathbb{R}_+$. Moreover, $\rho(1/r^2) \in \mathcal{C}^{\infty} (\mathbb{R})$, with $\partial_r^{\ell}|_{r = 0} \left[ \rho(1/r^2) \right] = 0$ for all $\ell \in \mathbb{N}$ and 
\[
| \partial_r^{\ell} \rho(1/r^2) | \leq C_\ell (1+ |r|^2)^{\frac{-2 - \ell}{2}} \,.
\]
Define $\tilde{H}(x, \xi) =\rho( H(x, \xi)  ): T^*(\partial D)  \rightarrow \mathbb{R}$. {\color{black} When $d \geq 3$,} it can be directly verified that under Assumption (A) and together with the fact that $H \in \tilde{\mathcal{S}}^{-2} (T^*(\partial D))$, one has $\tilde{H} \in \mathcal{S}^{-2} (T^*(\partial D))$.

Let us now consider $k (\leq d)$ Poisson commuting and functionally independent Hamiltonians $f_1 = \tilde{H}, f_2, \cdots f_k \in {\mathcal{S}^{m}(T^{*}(\partial D))}$ for $m \geq -2$ (if $k = d$, then we will have a completely integrable system on $M$). Let $F = (f_1, \cdots, f_k)$ be the $k$-tuple of the above Hamiltonian functions. We also consider the corresponding  ${h}$-pseudodifferntial operators $\mathrm{Op}_{f_1,{h}}, \cdots, \mathrm{Op}_{f_k,{h}}$ acting on $\mathcal{H}_{{h}} = L^2(\partial D)$.

Next we consider the following solution under the (joint) Hamiltonian flows:
\begin{equation}\label{eq:hf2}
\begin{cases}
\frac{\partial}{\partial t_j} a(t_1,...,t_k) &=  \{ f_j , a (t_1,...,t_k)  \}, \medskip\\
a_{0} (x,\xi) &\in \mathcal{S}^m(T^{*}X),
\end{cases}
\end{equation}
which exists since $\{f_i,f_j\} = 0$, 
where we recall
$ \{ \cdot , \cdot \}$ is the Poisson bracket given by 
\beqnx
\{ f , g\} := X_f \, g = - \omega(X_f, X_g)\,.
\eqnx
With this notion in hand, we have $\frac{\partial}{\partial t_j} a = X_{f_j } a $, and it is clear that, writing $t = (t_1,...,t_k)$, we have
$ a(t) = a_{0} ( \gamma(t), p (t)) $
where 
\beqn
\begin{cases}
    \frac{\partial}{\partial t_j}  (\gamma(t), p(t) ) &= X_{f_j} (\gamma(t), p(t) ),\medskip  \\
    (\gamma(0), p(0) ) & = (x, \xi ) \in M.
\end{cases}
\label{ODE_joint_flow}
\eqn
To emphasize the dependence of $a$ on the initial value $(x,\xi)$, we also sometimes write
\beqnx
a_{(x,\xi)}(t) = a(t) \quad  \text{ with } \quad (\gamma(0), p(0) )  = (x, \xi ) \,.
\eqnx
Next we introduce the Heisenberg's picture and lift the above flow to the operator level via the well-known Egorov's theorem together with the commutativity of $\mathrm{Op}_{f_i, {h}}$ and $\mathrm{Op}_{f_j, {h}}$, for our situation.  Since this is a handy extension of the original Egorov's theorem (cf. \cite{Hor1,Hor2,Ego,ACL}), we only provide a sketch of the proof.

\begin{Proposition} \label{prop:4.1}
  Under Assumption (A), {\color{black} when $d \geq 3$,} we consider the following operator evolution equation for each $j \in \{1, \cdots, k\}$:
    \begin{equation}\label{eq:ohf1-QIS}
    \begin{cases}
    \frac{\partial}{\partial t_j} A_{{h}}(t) = \frac{\mathrm{i}}{ {h} } \left[ \mathrm{Op}_{f_j, {h}} , A_{{h}}(t) \right],\medskip\\
    A_{{h}}(0) = \mathrm{Op}_{a_{0}, {h}}. 
    \end{cases}
    \end{equation}
    For $|t|< C \log (h)$, it defines a unique Fourier integral operator (up to $h^{\infty} \, \Phi \mathrm{SO}_h^{-\infty} $) 
    \beqnx
    A_{{h}}(t_1,..., t_k) &=& e^{- \sum_{j=1}^k \frac{\mathrm{i} t_j}{{h}}   \mathrm{Op}_{f_j, {h}}  }  \, A_{{h}}(0) \, e^{ \sum_{j=1}^k \frac{\mathrm{i} t_j}{{h}}   \mathrm{Op}_{f_j, {h}}  }  + \mathcal{O}(h \, \Phi \mathrm{SO}_h^{\, { m}-1} )   \\
    &=&\mathrm{Op}_{a (t), {h}}  + \mathcal{O}(h \, \Phi \mathrm{SO}_h^{\,m-1} ). 
    \eqnx
\end{Proposition}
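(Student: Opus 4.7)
The plan is to establish this multi-parameter Egorov-type result by combining the classical Poisson commutativity $\{f_i, f_j\} = 0$ with the quantum commutativity $[\mathrm{Op}_{f_i,h}, \mathrm{Op}_{f_j,h}] = 0$ that is built into the definition of a quantum integrable system. Because commutativity holds on both sides, the $k$-parameter flow factors as an abelian composition of one-parameter flows, so the whole statement should reduce to iterating the single-variable Egorov theorem one coordinate at a time.

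First, I would introduce the one-parameter unitary groups $U_j(t_j) := \exp\bigl(-\mathrm{i} t_j \mathrm{Op}_{f_j,h}/h\bigr)$, which are Fourier integral operators modulo $h^{\infty}\,\Phi\mathrm{SO}_h^{-\infty}$ via the standard Cauchy problem for the associated Schr\"odinger-type equation. Quantum commutativity ensures that the $U_j$'s commute pairwise, so the product $U(t) := U_1(t_1)\cdots U_k(t_k)$ is unambiguously defined and jointly smooth in $t=(t_1,\ldots,t_k)$. Setting $A_h(t) := U(t)^{*} A_h(0)\, U(t)$ and differentiating in $t_j$ automatically produces each Heisenberg equation in \eqref{eq:ohf1-QIS}, and uniqueness of the solution follows from standard operator ODE theory applied one variable at a time.

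Next, I would extract the pseudodifferential structure of $A_h(t)$ by induction on the number of active time variables. For the first variable, the classical one-parameter Egorov theorem (cf.\ \cite{Hor1,Hor2,Ego,ACL}) gives
\[
U_1(t_1)^{*}\,\mathrm{Op}_{a_0, h}\,U_1(t_1) = \mathrm{Op}_{a_0 \circ \Phi_1^{t_1},h} + \mathcal{O}\bigl(h\,\Phi\mathrm{SO}_h^{m-1}\bigr),
\]
where $\Phi_j^{t_j}$ denotes the Hamiltonian flow of $f_j$ on $T^{*}(\partial D)$. Iterating through successive conjugations by $U_2,\ldots,U_k$ and invoking Poisson commutativity to see that the composition $\Phi_k^{t_k}\circ\cdots\circ\Phi_1^{t_1}$ is independent of ordering and coincides with the joint flow $(\gamma(t),p(t))$ of \eqref{ODE_joint_flow}, one obtains $a(t)(x,\xi) = a_0(\gamma(t), p(t))$. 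As a consistency check, Moyal's composition formula yields $\frac{\mathrm{i}}{h}[\mathrm{Op}_{f_j,h},\mathrm{Op}_{a(t),h}] = \mathrm{Op}_{\{f_j,a(t)\},h} + \mathcal{O}(h\,\Phi\mathrm{SO}_h^{m-1})$, which matches the classical transport equation \eqref{eq:hf2} defining $a(t)$.

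The main obstacle will be controlling the remainder uniformly out to the Ehrenfest-type window $|t|<C|\log h|$, since a naive iteration of Egorov only secures compact time intervals. The standard remedy, in the spirit of Bouzouina--Robert, is to propagate a Gr\"onwall estimate on the symbol seminorms $\|a(t)\|_{\widetilde{\mathcal{S}}^m,N}$: derivatives of the joint flow grow at most like $e^{\Lambda|t|}$ with $\Lambda$ controlled by the sup norms of the Jacobians of the vector fields $X_{f_j}$, so the remainder at seminorm order $N$ accumulates as $h\,e^{C_N|t|}$. Choosing $C$ strictly smaller than $1/\Lambda$ (depending on the seminorm order one wishes to retain) keeps the error inside $h\,\Phi\mathrm{SO}_h^{m-1}$, and the abelian structure of the joint flow means the multi-time Lyapunov rates simply add, so no obstruction appears beyond what is already present in the single-variable setting.
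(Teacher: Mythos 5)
Your argument is correct and reaches the same conclusion, but it is organized differently from the paper's proof. The paper proceeds by a direct parametrix construction: starting from the Moyal-type relation $[\mathrm{Op}_{a},\mathrm{Op}_{b}]=\mathrm{Op}_{\{a,b\}}+\mathcal{O}(h\,\Phi\mathrm{SO}_h^{m+n-2})$ and the quantum commutativity $[\mathrm{Op}_{f_i,h},\mathrm{Op}_{f_j,h}]=0$, it builds the symbol of $A_h(t)$ order by order in $h$ (principal level first, then the full symbol inductively), controls the error operators by repeated use of the Calder\'on--Vaillancourt theorem, invokes Beals' theorem to conclude that the solution is a Fourier integral operator, and only afterwards identifies it with the conjugation $e^{-\sum_j \mathrm{i}t_j\mathrm{Op}_{f_j,h}/h}A_h(0)e^{\sum_j \mathrm{i}t_j\mathrm{Op}_{f_j,h}/h}$ by matching principal symbols. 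You reverse the logical order: you \emph{define} the solution via the commuting unitary groups $U_j(t_j)$, verify the Heisenberg equations by differentiation, and then extract the symbol by iterating the one-parameter Egorov theorem, using Poisson commutativity to identify the ordered composition of the flows $\Phi_j^{t_j}$ with the joint flow of \eqref{ODE_joint_flow}. Your route is more modular, since it uses single-variable Egorov as a black box rather than redoing the inductive symbol construction, and it is also more explicit about the point the paper's sketch leaves implicit, namely why the expansion survives up to the Ehrenfest-type window $|t|<C|\log h|$: your Bouzouina--Robert-style Gr\"onwall control of the symbol seminorms under the flow is exactly the right mechanism there, and the observation that the abelian structure makes the multi-time Lyapunov rates additive is the correct reason no new obstruction arises. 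Two small technical points you should make explicit if you flesh this out: the existence of the unitary groups $U_j(t_j)$ requires (essential) self-adjointness of $\mathrm{Op}_{f_j,h}$, which holds only modulo lower-order corrections for the right quantization the paper defaults to (this is why the paper symmetrizes and works modulo $h\,\widetilde{\Phi\mathrm{SO}}_h^{\,m-1}$); and when you conjugate the remainder of the first Egorov step by $U_2,\dots,U_k$, you should note that these conjugations preserve the class $h\,\Phi\mathrm{SO}_h^{\,m-1}$ so that the errors do not accumulate beyond the stated order.
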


\begin{proof} 
First, by noting that $[ \mathrm{Op}_{a} , \mathrm{Op}_{b} ] = \mathrm{Op}_{ \{a,b\}} + \mathcal{O}(h  \Phi\mathrm{SO}_h^{m+n-2} )$ if $a \in S^{m} ( T^*(\partial D))$ and $b \in S^{n} ( T^*(\partial D))$ and using the given condition that $[\mathrm{Op}_{f_i, {h}},\mathrm{Op}_{f_j, {h}}] = 0$ whenever $i \neq j$, one can construct the symbol in the principal level. Then one can construct the full symbol in an inductive manner, and bounds the error operator via the Calder\'on-Vaillancourt theorem repeatedly. By Beals' theorem, the operator is guaranteed as a Fourier integral operator.  Explicit expression of $A_h(t)$ comes from checking the principal symbols, and bounding the error operator via the Zygmund trick.
\end{proof}

{\color{black} When $d \geq 3$,} we proceed to consider $ f_1 (x, \xi) = \tilde{H}(x,\xi) = \rho \left( [p_{\mathcal{K}^*_{\partial D}}(x,\xi)  ]^2 \right) $, and can immediately see that
\beqnx
\mathrm{Op}_{f_1,h} =\mathrm{Op}_{\tilde{H},h}  = \rho \left( [ \mathcal{K}^*_{h,\partial D} ]^2 \right) \text{ mod } (h \Phi\mathrm{SO}_h^{-3} ) \, . 
\eqnx
Let us also denote $\{L_{j,h}\}_{j=2}^k$ a family of pseudo-differential opererators such that 
\[
 \mathrm{Op}_{f_j, {h}}  = L_{j,h}  \, (h \Phi\mathrm{SO}_h^{ m} ) \,,
\]
then we immediately obtain the following corollary.
\begin{Corollary}
    \label{congugation} Under Assumption (A), {\color{black} when $d \geq 3$,} it holds that
    \beqnx 
    A_{{h}}(t) &=& e^{- \frac{\mathrm{i} t_1}{h}  \rho \left( [ \mathcal{K}^*_{h,\partial D} ]^2 \right)  - \sum_{j=2}^k \frac{\mathrm{i} t_j}{{h}}   L_{j,h}  }  \, A_{{h}}(0) \, e^{ \frac{\mathrm{i} t_1}{h}  \rho \left( [ \mathcal{K}^*_{h,\partial D} ]^2 \right)  - \sum_{j=2}^k \frac{\mathrm{i} t_j}{{h}} L_{j,h} }  + \mathcal{O}(h \, \Phi \mathrm{SO}_h^{\, m -1} ) \\
    &=&  \mathrm{Op}_{a (t), {h}}  , + \mathcal{O}(h \, \Phi \mathrm{SO}_h^{\, m-1} ). 
    \eqnx
\end{Corollary}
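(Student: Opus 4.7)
My plan is to prove the second equality in the statement directly, using a single-operator semiclassical Egorov argument applied to the explicit generator
\[
Q_h := t_1\, \rho([\mathcal{K}^*_{h,\partial D}]^2) + \sum_{j=2}^{k} t_j\, L_{j,h},
\]
and then to deduce the first equality by chaining with Proposition~\ref{prop:4.1}, which already provides $A_h(t) = \mathrm{Op}_{a(t),h} + \mathcal{O}(h\,\Phi\mathrm{SO}_h^{m-1})$.

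First I would observe that, under the hypotheses $\mathrm{Op}_{f_1,h} \equiv \rho([\mathcal{K}^*_{h,\partial D}]^2) \bmod h\,\Phi\mathrm{SO}_h^{-3}$ and $\mathrm{Op}_{f_j,h} \equiv L_{j,h} \bmod h\,\Phi\mathrm{SO}_h^{m}$ for $j \geq 2$, the operator $Q_h$ is a single $h$-pseudodifferential operator whose principal symbol is $\sigma(Q_h) = \sum_{j=1}^{k} t_j f_j$. Essential self-adjointness (modulo the error already being tracked) follows from that of $\mathcal{K}^*_{h,\partial D}$ recorded in \eqref{eq:s4}; if necessary I would replace $Q_h$ by its Weyl symmetrization, which perturbs it only by an element of $h\,\Phi\mathrm{SO}_h^{m-1}$. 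Second, I would apply the semiclassical Egorov theorem---the same construction that underlies the proof of Proposition~\ref{prop:4.1}, but applied to the single Hamiltonian $\sigma(Q_h)$ at time $s=1$---to obtain
\[
e^{-iQ_h/h}\, A_h(0)\, e^{iQ_h/h} = \mathrm{Op}_{b,h} + \mathcal{O}(h\,\Phi\mathrm{SO}_h^{m-1}),
\]
where $b$ is the push-forward of $a_0$ under the time-$1$ Hamiltonian flow of $\sigma(Q_h)$. Since $\{f_i, f_j\} = 0$, the Hamilton vector fields $X_{f_j}$ commute on $T^*(\partial D)$, so the time-$1$ flow of $\sum_j t_j f_j$ factors as $\Phi^{f_1}_{t_1} \circ \cdots \circ \Phi^{f_k}_{t_k}$, and comparison with \eqref{eq:hf2} identifies $b$ with $a(t_1, \ldots, t_k) = a(t)$. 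This establishes the second equality, and composing it with Proposition~\ref{prop:4.1} yields the first.

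The step I expect to be the main obstacle is maintaining the error estimate uniformly over the logarithmic time window $|t_j| < C \log(1/h)$, since the naive Egorov bound degrades like $e^{C'|t|} h$ and can exceed the target order $h\,\Phi\mathrm{SO}_h^{m-1}$ if $C$ is chosen too large. I would handle this exactly as in the proof of Proposition~\ref{prop:4.1}: build the full symbol $b$ order by order, bound each correction via Calder\'on--Vaillancourt, and tune $C$ small enough that the Zygmund-type iteration keeps the accumulated loss below $h^{1-\epsilon}$, to be absorbed into the stated error. The commutator $[\rho([\mathcal{K}^*_{h,\partial D}]^2),\, L_{j,h}]$ requires no special treatment, because it already lies in $h\,\Phi\mathrm{SO}_h^{m-2}$ by the vanishing of $\{f_1, f_j\}$ at the principal level.
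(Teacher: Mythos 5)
Your proposal is correct and in substance coincides with the paper's treatment: the paper states this corollary with no separate proof, regarding it as an immediate consequence of Proposition~\ref{prop:4.1} once $\mathrm{Op}_{f_1,h}$ and $\mathrm{Op}_{f_j,h}$ are identified with $\rho([\mathcal{K}^*_{h,\partial D}]^2)$ and $L_{j,h}$ modulo $\mathcal{O}(h)$ pseudodifferential corrections, which is exactly the mechanism (Egorov plus an $\mathcal{O}(h)$ perturbation of the generators, with the same Calder\'on--Vaillancourt bookkeeping over the logarithmic time window) that your argument via the combined generator $Q_h$ implements.
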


\subsection{Trace formula and generalized Weyl's law.}     We first state the Schwartz functional calculus as follows without proof. 
\begin{Lemma} \cite{Hor1,Hor2}
Recall that ${\mathcal{S}(\mathbb{R})}$ is the space of Schwartz functions on $\mathbb{R}$. Then for $f \in {{\mathcal{S}}}(\mathbb{R})$ and $a \in S^m(T^*(\partial D))$, we have $f ( \mathrm{Op}_{a,h} ) \in \Phi \mathrm{SO}_h^{-\infty}$ and 
\begin{equation}\label{eq:sfc1}
 f ( \mathrm{Op}_{a,h} ) = \mathrm{Op}_{f ( a )} + \mathcal{O}( h \Phi \mathrm{SO}_h^{-\infty}).
\end{equation}
\end{Lemma}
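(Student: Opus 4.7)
The plan is to invoke the Helffer--Sj\"ostrand functional calculus, which is the standard route in semiclassical analysis for turning $f(\mathrm{Op}_{a,h})$ with $f \in \mathcal{S}(\mathbb{R})$ into a pseudo-differential operator with principal symbol $f \circ a$. The idea is to encode $f$ through the $\bar{\partial}$ of an almost analytic extension and to work with resolvents of $\mathrm{Op}_{a,h}$, which, away from the real axis, can be built within the calculus; the $h$-expansion then falls out automatically. For the self-adjointness needed to make sense of the spectral object $f(\mathrm{Op}_{a,h})$, I would work with the Weyl quantization of a real representative of the symbol class, using that the three quantizations differ only by elements of $h\,\Phi\mathrm{SO}_h^{m-1}$, as already noted in the earlier proposition.

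First, I would construct an almost analytic extension $\tilde{f}: \mathbb{C} \to \mathbb{C}$ of $f$, i.e.\ a smooth function with $\tilde{f}|_\mathbb{R} = f$ satisfying $|\bar{\partial} \tilde{f}(z)| \leq C_N \langle z \rangle^{-N} |\mathrm{Im}\, z|^N$ for every $N \geq 0$; such a $\tilde{f}$ exists because $f$ is Schwartz (Mather's construction). The Helffer--Sj\"ostrand formula then gives
\[
f(\mathrm{Op}_{a,h}) = -\frac{1}{\pi} \int_{\mathbb{C}} \bar{\partial}\tilde{f}(z) \, (z - \mathrm{Op}_{a,h})^{-1} \, dL(z),
\]
where $dL$ is Lebesgue measure on $\mathbb{C}$.

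Next, for each $z \notin \sigma(\mathrm{Op}_{a,h})$, in particular for $\mathrm{Im}\, z \neq 0$, I would construct a parametrix for the resolvent in the calculus, starting from the leading ansatz $\mathrm{Op}_{(z-a)^{-1},h}$ and iteratively correcting so as to kill the error order by order in $h$. This yields an expansion
\[
(z - \mathrm{Op}_{a,h})^{-1} = \mathrm{Op}_{b(\,\cdot\, ;\, z, h),\, h} \pmod{h^\infty \, \Phi\mathrm{SO}_h^{-\infty}},
\]
with $b(x,\xi; z, h) = (z - a(x,\xi))^{-1} + h\, b_1(x,\xi; z) + h^2 b_2(x,\xi; z) + \cdots$, where every symbol seminorm of $b_j$ is controlled by a fixed negative power of $|\mathrm{Im}\, z|$. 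Substituting into the Helffer--Sj\"ostrand formula and exchanging integration with symbol manipulation is legitimate because $\bar{\partial}\tilde{f}$ vanishes to infinite order on $\mathbb{R}$, absorbing the $|\mathrm{Im}\, z|^{-N}$ blow-ups of the resolvent symbols. The leading term evaluates by the Cauchy--Pompeiu identity to
\[
-\frac{1}{\pi} \int_{\mathbb{C}} \bar{\partial}\tilde{f}(z) \, (z - a(x,\xi))^{-1} \, dL(z) = f(a(x,\xi)),
\]
and the $h$-corrections integrate to an element of $h\, \Phi\mathrm{SO}_h^{-\infty}$.

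The main technical obstacle I foresee is twofold. First, one must verify that $f(a)$ really lies in $S^{-\infty}$ rather than in some finite order class: applying the Fa\`a di Bruno formula to $\partial^\alpha_{(x,\xi)}[f(a)]$ produces a sum of terms in which each factor of a derivative of $f$ evaluated at $a(x,\xi)$ can be bounded by the Schwartz decay of $f$, and one must check that this rapid decay survives composition with $a \in S^m$, for which the crucial ingredient is to separate the region where $|a(x,\xi)|$ is large (on which Schwartz decay of $f$ provides rapid decay in $\xi$) from a compact region (on which the symbol trivially sits in $S^{-\infty}$ after localization). Second, the parametrix for the resolvent must carry explicit $|\mathrm{Im}\, z|^{-N}$ seminorm bounds uniform in $h$, and the exchange of limits with the integral has to be performed in the topology of $\Phi\mathrm{SO}_h^{-\infty}$, which is the delicate step that makes Helffer--Sj\"ostrand work; I would model this on the standard treatment in Chapters 8--9 of Dimassi--Sj\"ostrand, suitably transported to $T^*(\partial D)$ by means of the partition-of-unity formalism already recorded in Section~2.
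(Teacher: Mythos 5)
The paper itself states this lemma ``without proof'' and simply cites H\"ormander, so there is no in-paper argument to compare against; your proposal must be judged on its own. The Helffer--Sj\"ostrand route you take (almost analytic extension, resolvent parametrix with $|\mathrm{Im}\, z|^{-N}$ seminorm bounds, Cauchy--Pompeiu for the leading term, reduction to a self-adjoint Weyl representative modulo $h\,\Phi\mathrm{SO}_h^{m-1}$) is the standard way this semiclassical functional calculus is proved, and the overall architecture of your argument is sound.

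There is, however, one step that fails as written: your justification that $f(a)\in S^{-\infty}$. You propose to split into the region where $|a(x,\xi)|$ is large, on which ``Schwartz decay of $f$ provides rapid decay in $\xi$,'' and a compact remainder. That dichotomy is valid only for elliptic symbols of \emph{positive} order, where $|a(x,\xi)|\to\infty$ as $|\xi|\to\infty$. The symbols to which the lemma is applied in this paper have order $m\le 0$: for instance $\tilde H=\rho(H)\in S^{-2}$ satisfies $\tilde H(x,\xi)\to 0$ as $|\xi|\to\infty$. So the set where $|a|$ is large is the compact one (a neighbourhood of the zero section), while on the noncompact end $f(a(x,\xi))\to f(0)$; for a general Schwartz $f$ with $f(0)\ne 0$ the symbol $f(a)$ is then only of order $0$, and $f(\mathrm{Op}_{a,h})$ differs from $f(0)\,Id$ by a lower-order operator, so it is certainly not in $\Phi\mathrm{SO}_h^{-\infty}$. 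The statement, and your proof, need the additional hypothesis that $f$ vanishes to infinite order at $0$ (more precisely, at the limit values of $a$ at fibre infinity); then $|f^{(k)}(a)|\le C_{k,N}|a|^{N}\lesssim\langle\xi\rangle^{mN}$ for every $N$, and Fa\`a di Bruno gives $f(a)\in S^{-\infty}$ together with the resolvent-symbol bounds you need under the Helffer--Sj\"ostrand integral. This restriction is harmless for the way the lemma is used later in the paper: the cutoffs $\chi_{j,\varepsilon}$ approximate characteristic functions of intervals that must in any case be bounded away from $0$ for the relevant operators to be trace class. With that hypothesis made explicit, the rest of your argument goes through.
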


The above lemma leads us to the following trace theorem:
\begin{Proposition}\cite{Hor1,Hor2,trace,erg1,erg2}
Given $a \in S^m(T^*(\partial D))$, if $\mathrm{Op}_{a,h} $ is in the trace class and $f \in {\mathcal{S}(\mathbb{R})}$, then
\beqnx
(2 \pi h )^{ (d-1)} \mathrm{tr} ( f(  \mathrm{Op}_{a,h}  ) ) =   \int_{T^*(\partial D)} f( a ) \, d \sigma \otimes d \sigma^{-1} + \mathcal{O}(h) \, ,
\eqnx
where $d \sigma \otimes d \sigma^{-1}$ is the Liouville measure given by the top form $\omega^{d-1} / (d-1) !$.
\end{Proposition}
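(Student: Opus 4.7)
The strategy is two stages: first invoke the Schwartz functional calculus lemma just above to reduce $f(\mathrm{Op}_{a,h})$ to a pseudodifferential operator with symbol $f(a)$ modulo a lower-order correction, and then evaluate the trace of that operator via the diagonal of its Schwartz kernel in local coordinates. The lemma gives $R_h \in \Phi\mathrm{SO}_h^{-\infty}$ with
\[
f(\mathrm{Op}_{a,h}) = \mathrm{Op}_{f(a),h} + h R_h.
\]
Since $\mathrm{Op}_{a,h}$ is trace class by hypothesis and $f$ is bounded, $f(\mathrm{Op}_{a,h})$ is trace class; moreover the rapid decay of $f$ makes $f(a)$ Schwartz along the fibre $\xi$ direction, so both $\mathrm{Op}_{f(a),h}$ and the remainder $R_h$ lie in the trace class on the closed manifold $\partial D$. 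It therefore suffices to analyze the two contributions separately.

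For the principal piece, I would use the chart decomposition with partition of unity $\{\chi_j^2\}$ from Section 2.1. In a single chart $U_j$, the Schwartz kernel of the right quantization of $\chi_j^2 f(a)$ on the diagonal reads
\[
K_j(x,x) = \frac{1}{(2\pi h)^{d-1}}\int_{\mathbb{R}^{d-1}} \chi_j^2(x)\,f(a)(x,\xi)\, d\xi,
\]
whose integral over $x\in U_j$ is finite thanks to the Schwartz decay of $f(a)$ in $\xi$. Summing over $j$, noting that the off-diagonal cross-chart terms are $\mathcal{O}(h^\infty)$ by non-stationary phase (the phase $(x-y)\cdot\xi/h$ has no critical points when $x\neq y$), and tracking the coordinate Jacobians which assemble into the Liouville volume form $\omega^{d-1}/(d-1)!$, one arrives at the desired leading asymptotics with an $\mathcal{O}(h)$ error. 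This error absorbs the discrepancy between the right, left, and Weyl quantizations, which is produced by applying $\exp(\pm \tfrac{ih}{2}\partial_x\partial_\xi)$ to the symbol and only affects subprincipal terms.

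The expected main obstacle is the trace-norm estimate on the remainder $hR_h$. Since $R_h \in \Phi\mathrm{SO}_h^{-\infty}$, its symbol decays faster than any negative power of $|\xi|$, and a standard semiclassical trace-class bound on the closed manifold $\partial D$---obtained by factoring $R_h$ into a product of Hilbert--Schmidt operators and controlling their kernels via the Calder\'on--Vaillancourt theorem together with the Schwartz kernel formula above---yields $\|R_h\|_{\mathrm{tr}} = \mathcal{O}(h^{-(d-1)})$. Consequently $(2\pi h)^{d-1}\,\mathrm{tr}(h R_h) = \mathcal{O}(h)$, which is absorbed into the claimed remainder and completes the proof. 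The delicate bookkeeping of these trace-norm constants in a covering of $\partial D$, together with ensuring that the partition-of-unity errors remain uniformly controlled, is where I expect the actual technical work to concentrate.
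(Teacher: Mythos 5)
The paper offers no proof of this proposition at all---it is quoted directly from the literature (H\"ormander, Sunada, Shnirelman, Colin de Verdi\`ere)---so there is no in-paper argument to compare against. Your route is precisely the standard one those references use: functional calculus to replace $f(\mathrm{Op}_{a,h})$ by $\mathrm{Op}_{f(a),h}$ modulo $h\,\Phi\mathrm{SO}_h^{-\infty}$, integration of the Schwartz kernel over the diagonal chart by chart (with the paper's quantization $\sum_j \chi_j\,\mathrm{Op}^j(\chi_j\,\cdot)$ there are in fact no cross-chart terms to dispose of, since $\sum_j\chi_j^2=1$ does the whole job on the diagonal), the Jacobians assembling into $\omega^{d-1}/(d-1)!$, the left/right/Weyl discrepancy absorbed into the $\mathcal{O}(h)$ term, and the trace-norm bound $\|R_h\|_{\mathrm{tr}}=\mathcal{O}(h^{-(d-1)})$ to close the remainder. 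That is the correct architecture.

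One step is false as written, however. Boundedness of $f$ together with $\mathrm{Op}_{a,h}$ being trace class does \emph{not} make $f(\mathrm{Op}_{a,h})$ trace class: a self-adjoint trace-class operator has eigenvalues $\lambda_n\to 0$, so $f(\lambda_n)\to f(0)$ and $\sum_n |f(\lambda_n)|$ diverges whenever $f(0)\neq 0$ (indeed $f(A)-f(0)\,Id$ is trace class, but $f(0)\,Id$ is not). For the same reason $f(a)$ is not automatically ``Schwartz along the fibre'': the symbols relevant in this paper have negative order, so $a(x,\xi)\to 0$ and $f(a)(x,\xi)\to f(0)$ as $|\xi|\to\infty$, and the phase-space integral on the right-hand side is divergent unless $f(0)=0$. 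You need to add the hypothesis $f(0)=0$ (or $0\notin\mathrm{supp}\,f$), which is how the proposition is actually invoked later, where the cutoffs approximate indicators of rectangles bounded away from the accumulation point of the spectrum. With that hypothesis in place, $|f(a)|\leq C_N|a|^N$ near $a=0$ combined with the symbol estimates does give rapid decay of $f(a)$ in $\xi$, both operators on the right of your decomposition are genuinely trace class, and the rest of your argument goes through. This is a defect inherited from the loose statement of the proposition, but your proof should not paper over it with the incorrect ``bounded $f$ preserves trace class'' claim.
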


\noindent 
Let $(\lambda^i_1({h}), \cdots, \lambda^i_{k}({h}))$ be elements of the joint spectrum $\Sigma(\mathrm{Op}_{{h}}f_1, \cdots, \mathrm{Op}_{{h}}f_k)$ (for simplicity of notation, let us call it $\Sigma$), with the joint eigenstates $\phi^i({h})$; that is, $\mathrm{Op}_{{h}}f_j \phi^i({h}) = \lambda^i_j({h})\phi^i({h}), \ j \in \{1, \cdots, k\}.$  
We remark that, with this notation, it holds that  $( \tilde{\lambda}^i(h),  \phi^i(h) )$ is an eigenpair of $\mathcal{K}_{h,\partial D}$ if and only if  $( \lambda^i_{1}({h}) = \rho (\lambda^i(h) )^2 ,  \phi^i(h) )$ is an eigenpair of $\rho(  \mathcal{K}_{h,\partial D}^2  )$.
Then we can prove the following result. 

\begin{Proposition} 
Under assumption (A), {\color{black} when $d \geq 3$,} let $\mathcal{C} \subset \mathbb{R}^k$ be a compact convex polytope. Then  for any $a_j \in {\mathcal{S}}^m(T^{*}(\partial D)), j \in \{1, 2, \cdots, a_k\}$, we have as $h\rightarrow+0$,
\beqn
(2 \pi h )^{d-k} \sum_{(   \lambda_1^i(h)  , \cdots, \lambda_k^i(h)) \in \mathcal{C}} c_i  \, \langle  \mathrm{Op}_{a, {h}}  \, \phi^i(h) , \phi^i(h) \rangle_{L^2(\partial D, d \sigma)}  =  \int_{ \left\{F = (f_1, \cdots, f_k) \in \mathcal{C}\right\} } a  \, d \sigma \otimes d \sigma^{-1} + o_{\mathcal{C}}(1),
\label{weyl}
\eqn
where $c_i := | \phi^i |_{H^{-\frac{1}{2}}(\partial X, d \sigma)}^{-2} $ and the little-$o$ depends on $\mathcal{C}$.
\end{Proposition}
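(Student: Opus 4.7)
The plan is to reduce the proposition to an application of the trace formula by combining (i) a smooth approximation of $\mathbf{1}_{\mathcal{C}}$, (ii) a joint semiclassical functional calculus for the commuting family $\{\mathrm{Op}_{f_j,h}\}_{j=1}^{k}$, and (iii) the spectral expansion of the resulting trace in the joint eigenbasis. Since $\mathcal{C}$ is a compact convex polytope, standard mollification produces two families $\chi_\varepsilon^{\pm}\in C_c^\infty(\mathbb{R}^k)$ with $\chi_\varepsilon^-\le \mathbf{1}_{\mathcal{C}}\le \chi_\varepsilon^+$, both supported in an $\varepsilon$-neighbourhood of $\mathcal{C}$ and both converging to $\mathbf{1}_{\mathcal{C}}$ in $L^1(\mathbb{R}^k)$ and pointwise a.e.\ as $\varepsilon\to 0^+$.

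Because $\{\mathrm{Op}_{f_j,h}\}$ pairwise commute, the joint spectral theorem defines $\chi(\mathrm{Op}_{f_1,h},\ldots,\mathrm{Op}_{f_k,h})$ for any bounded Borel $\chi$. For $\chi\in C_c^\infty(\mathbb{R}^k)$ I would apply the Helffer--Sj\"ostrand formula one variable at a time (using commutativity to freely rearrange factors), together with the Schwartz functional calculus of the preceding lemma, to obtain
\[
\chi(\mathrm{Op}_{f_1,h},\ldots,\mathrm{Op}_{f_k,h})=\mathrm{Op}_{\chi\circ F,h}+\mathcal{O}\bigl(h\,\Phi\mathrm{SO}_{h}^{-\infty}\bigr).
\]
Composing with $\mathrm{Op}_{a,h}$ then yields a pseudodifferential operator with principal symbol $(\chi\circ F)\cdot a$, which is compactly supported in $T^*(\partial D)$ thanks to Assumption (A) bounding $H$ uniformly away from its degenerate value on the zero section.

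Now apply the trace formula to this operator and expand the trace in the joint eigenbasis $\{\phi^i(h)\}$. Because $\mathcal{K}^*_{h,\partial D}$ is naturally self-adjoint in $H^{-1/2}(\partial D,d\sigma)$ whereas the bracket on the left-hand side of \eqref{weyl} is the $L^2$-bracket, the change of basis produces precisely the weight $c_i=|\phi^i|_{H^{-1/2}}^{-2}$, and one obtains, for fixed $\chi\in C_c^\infty(\mathbb{R}^k)$,
\[
(2\pi h)^{d-k}\sum_i c_i\,\chi(\vec{\lambda}^i(h))\bigl\langle \mathrm{Op}_{a,h}\phi^i,\phi^i\bigr\rangle_{L^2}=\int_{T^*(\partial D)}(\chi\circ F)\cdot a\,d\sigma\otimes d\sigma^{-1}+o(1).
\]
Inserting $\chi=\chi_\varepsilon^\pm$, sending $h\to 0^+$ and then $\varepsilon\to 0^+$, and using that Assumption (A) together with the functional independence of the $f_j$'s forces $F^{-1}(\partial\mathcal{C})$ to carry Liouville measure zero (so $\chi_\varepsilon^\pm\circ F\to \mathbf{1}_{F^{-1}(\mathcal{C})}$ in $L^1$), the sandwich collapses to the claimed identity.

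The decisive difficulty will be the boundary contribution: eigenvalues $\vec{\lambda}^i(h)$ lying within $\mathcal{O}(\varepsilon)$ of $\partial\mathcal{C}$, which is only piecewise smooth and contains lower-dimensional faces and corners of the polytope. What saves the argument is the absolute continuity of the Duistermaat--Heckman-type push-forward $F_*(d\sigma\otimes d\sigma^{-1})$ on the regular values of $F$, guaranteed by Assumption (A) and the functional independence of the Hamiltonians; this makes the Liouville measure of an $\varepsilon$-neighbourhood of $\partial\mathcal{C}$ tend to zero as $\varepsilon\to 0^+$, controlling the error $o_{\mathcal{C}}(1)$ uniformly.
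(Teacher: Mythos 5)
Your proposal is correct and follows essentially the same route as the paper: approximate the indicator of the polytope by smooth cutoffs, convert these into spectral projectors via the (joint) semiclassical functional calculus, apply the trace formula to the cutoff operators composed with $\mathrm{Op}_{a,h}$, and pass to the limit, with the weights $c_i$ arising from the $L^2$ versus $H^{-1/2}$ normalization of the eigenfunctions. The only cosmetic difference is that the paper first treats rectangles with product cutoffs $\prod_j\chi_{j,\varepsilon}$ and then covers a general polytope by rectangles, whereas you mollify $\mathbf{1}_{\mathcal{C}}$ directly and control the boundary contribution via absolute continuity of $F_*(d\sigma\otimes d\sigma^{-1})$ — a point the paper leaves implicit.
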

\begin{proof}
Let us first take $\mathcal{C} := \prod_{j =1}^k [r_j, s_j]$ a $k$ dimensional rectangle.
Take $$\chi_{1,\varepsilon} \left(\rho [ \mathcal{K}^*_{h,\partial D} ]^2 \right), \chi_{2,\varepsilon}\left( L_{2,h} \right), \cdots, \chi_{k,\varepsilon}\left( L_{k,h} \right) \,, $$ where $\chi_{\varepsilon}(x) := \prod_{j=1}^k  \chi_{j,\varepsilon} (x_j) \in {\tilde{\mathcal{S}}}(\mathbb{R}^{k})$ approximate $\chi_{ \prod_{j =1}^k [r_j, s_j]   }$. Then $\chi_{1,\varepsilon} \left(\rho [ \mathcal{K}^*_{h,\partial D} ]^2 \right) \in \Phi \text{SO}_h^{-\infty}$ and  $\chi_{j,\varepsilon}\left( L_{j,h} \right)\in \Phi \text{SO}_h^{-\infty}$, for each $j \in \{2, \cdots, k\}$ by the functional calculus with the trace formula 
\begin{equation}\label{eq:wyl1}
    \begin{split}
(2 \pi h )^{(d-k)}  \text{tr}  \left(  \chi_{1,\varepsilon} \left(\rho [ \mathcal{K}^*_{h,\partial D} ]^2 \right) \prod_{j=2}^k    \chi_{j,\varepsilon}\left(  L_{j,h} \right)  \, \text{Op}_{a,h}   \,  \chi_{1,\varepsilon} \left(\rho [ \mathcal{K}^*_{h,\partial D} ]^2 \right) \prod_{j=2}^k    \chi_{j,\varepsilon}\left( L_{j,h} \right)  \right) \\
=  \int_{T^*(\partial D)} a  \chi_{1,\varepsilon}^2(\rho(H) )\prod_{j=2}^k  \chi_{j,\varepsilon}^2 ( f_j ) \, d \sigma \otimes d \sigma^{-1} + \mathcal{O}_{\mathcal{C},\varepsilon}(h) \,,
    \end{split}
\end{equation}
where $\mathcal{O}$ depends on $\mathcal{C} ,\varepsilon$.
Passing $\varepsilon$ to $0$ in \eqref{eq:wyl1}, $  \chi_{1,\varepsilon} \left(\rho [ \mathcal{K}^*_{h,\partial D} ]^2 \right) \prod_{j=2}^k    \chi_{j,\varepsilon}\left(  L_{j,h} \right)  $ converges to the spectral projection operator when the joint spectrum $ (  \lambda^i_1({h}) , \cdots, \lambda^i_{k}({h}))  \in \mathcal{C} = \prod_{j =1}^k [r_j, s_j]$, which readily gives \eqref{weyl} when $\mathcal{C} = \prod_{j =1}^k [r_j, s_j]$.   Next we realize that a general compact convex polytope $\mathcal{C}$ is rectifyable, and can hence be approximated by artituary refinement of cover by a finite disjoint union of $k$ dimensional rectangles, and a standard approximation argument leads ust to \eqref{weyl} for a general $\mathcal{C}$. Finally, we notice that $
 \| \phi^i (h) \|_{L^{2}(X , d \sigma)} =  \| \phi^i \|_{H^{-\frac{1}{2}}(X, d \sigma)} $.
 
 The proof is complete. 
\end{proof}

If taking $a =1 $ in \eqref{weyl}, one readily has the classical Weyl's law \cite{Hor1,Hor2,trace,erg1,erg2} and \cite{ACL2}.
An algebraic proof of this result can also be found in \cite{Ngoc1, Ngoc2}.

\begin{Corollary}  Let $\mathcal{C} \subset \mathbb{R}^k$ be a compact convex polytope.
\beqn
 \sum_{( \lambda^i_1(h),  \cdots, \lambda^i_k(h)) \in \mathcal{C} } 1  = (2\pi{h})^{k-d} \int_{ \left\{ F = (f_1, \cdots, f_k) \in \mathcal{C} \right\} }  \, d \sigma \otimes d \sigma^{-1} + o_{\mathcal{C}}({h}^{-n}).
\label{weyl2}
\eqn
\end{Corollary}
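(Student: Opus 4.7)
The plan is to derive \eqref{weyl2} as a direct specialization of the preceding proposition, taking the test symbol to be the constant $a \equiv 1 \in \mathcal{S}^m(T^*(\partial D))$. The only remaining work is to convert the weighted trace on the left-hand side of \eqref{weyl} into the raw eigenvalue count appearing in \eqref{weyl2}.

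To this end, observe that $\mathrm{Op}_{1,h}$ is a pseudodifferential operator on $\partial D$ whose principal symbol is identically $1$, as recalled in Section~\ref{sec:3.3}. Hence $\mathrm{Op}_{1,h} - \mathrm{Id}$ lies in a lower-order semiclassical class, and the Calder\'on--Vaillancourt theorem yields $\|\mathrm{Op}_{1,h} - \mathrm{Id}\|_{L^2(\partial D, d\sigma) \to L^2(\partial D, d\sigma)} = O(h)$ uniformly as $h \to 0$. Combined with the normalization $\|\phi^i(h)\|_{L^2(\partial D, d\sigma)} = \|\phi^i\|_{H^{-1/2}(\partial D, d\sigma)}$ recorded in \eqref{eq:eg1} and the definition $c_i = \|\phi^i\|_{H^{-1/2}}^{-2}$, this gives the eigenfunction-by-eigenfunction identity
\[
c_i\, \langle \mathrm{Op}_{1,h}\,\phi^i(h), \phi^i(h) \rangle_{L^2(\partial D, d\sigma)} = 1 + O(h),
\]
with the implied constant independent of the joint-eigenvalue index $i$.

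Letting $N(h)$ denote the number of joint eigenvalues $(\lambda_1^i(h),\ldots,\lambda_k^i(h))$ inside $\mathcal{C}$, summing this identity over such $i$ and substituting into \eqref{weyl} with $a=1$ gives
\[
(2\pi h)^{d-k}\, N(h)\, (1 + O(h)) = \int_{\{F \in \mathcal{C}\}} d\sigma \otimes d\sigma^{-1} + o_{\mathcal{C}}(1).
\]
Since the right-hand side is bounded as $h \to 0$, a one-step bootstrap forces $(2\pi h)^{d-k} N(h)$ itself to be bounded, so the multiplicative factor $1+O(h)$ is absorbed into the $o_{\mathcal{C}}(1)$ remainder; multiplying through by $(2\pi h)^{k-d}$ then yields \eqref{weyl2}. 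No substantive obstacle is anticipated beyond this bookkeeping: the whole argument is the standard passage from a symbol-weighted Weyl asymptotic to an eigenvalue count, and the only nontrivial input beyond \eqref{weyl} is the principal-symbol identification placing $\mathrm{Op}_{1,h} - \mathrm{Id}$ in a pseudodifferential remainder of operator norm $O(h)$ on $L^2(\partial D, d\sigma)$.
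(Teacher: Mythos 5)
Your proposal is correct and follows essentially the same route as the paper, which simply takes $a=1$ in \eqref{weyl}; your additional bookkeeping (that $\mathrm{Op}_{1,h}$ is the identity modulo $O(h)$ — in fact exactly the identity under the quantization \eqref{eq:pseudoX}, since $\sum_j \chi_j^2 = 1$ — and that $c_i\|\phi^i(h)\|_{L^2}^2 = 1$ by the normalization noted at the end of the proposition's proof) is precisely the intended justification.
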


\subsection{Ergodic decomposition theorem and quantum ergodicity on the leaves of the foliation by the integrable system.} 
From now on, denoting
\beqn
\mathcal{T} : T^*(\partial D) \times [0,\infty)^k & \rightarrow& T^*(\partial D) \notag \\
\mathcal{T} \left( (x,\xi), t \right) &=& (\gamma(t),p(t))
\label{Transformations}
\eqn
where $ (\gamma(\cdot),p(\cdot))$ is the \eqref{ODE_joint_flow},
we may adopt the notion in \cite{joint_ergodicity_1} in our case when $[0,\infty)^k$ forms a semi-group (we may in fact extend it to $\mathbb{R}^k$ by extending the Hamiltonian flow, but it is not necessary) and recall the definition of ergodicity for our purpose.
\begin{Definition}
Consider the family of maps $\{ \mathcal{T}_t (\cdot ) :=\mathcal{T} (\cdot,t)\}_{ t \in [0,\infty)^k}$.  Consider an invariant subspace $M \subset T^*(\partial D)$, i.e. $\mathcal{T}_t (M) = M$ for all $t \in [0,\infty)^k$.  We call a Radon measure over $M \subset T^*(\partial D)$ an invariant measure $\mu$ with respect to the joint Hamiltonian flow of the vector fields $X_{f_j}, j \in \{1, \cdots, k\}$ on $M$ if
\[
[ \mathcal{T}_t ]_{\#} \mu = \mu \text{ for all } t \in [0,\infty)^k,
\]
i.e. the push-foward measure of the measure coincide with itself.  We denote the set of such invariant measure as $M_{X_{F}}(M)$ (which forms a convex set.)
An invariant measure is ergodic with respect to the joint Hamiltonian flow generated by the vector fields $X_{f_j}, j \in \{1, \cdots, k\}$ on $M$ if for any measurable set $A \subset M$:
\[
\mu( A \Delta \mathcal{T}_t (A) ) = 0 \text{ for all } t \in [0,\infty)^k \ \quad \Longrightarrow \quad \mu(A) = 0 \text{ or }1\,.
\]
We denote the set of such ergodic measures as $M_{X_{F},\text{erg}}(M) $ (which can be directly checked to be the set of extremal points of $M_{X_{F}}(M)$.)
\end{Definition}
We remark that the standard equivalence of ergodicity, say e.g. for any measurable set $A \subset M$:
\[
\mathcal{T}_{t}^{-1} (A) \subset A \text{ for all } t \in [0,\infty)^k \ \quad \Longrightarrow \quad \mu(A) = 0 \text{ or }1\,,
\]
can be readily shown via standard and elementary arguments.
We would also like to remark that this definition of ergodicity is not that of the joint ergodicity of the family of commuting one parameter subgroups generated by each $X_{f_i}$ as introduced in e.g. \cite{joint_ergodicity_1,joint_ergodicity_2}, (which is instead a generalization of the mixing properties.)

Now, similar to our study in \cite{ACL2}, let us consider the set $\{F = (f_1, \cdots, f_k) = (\rho(1), e_2, \cdots, e_k)\}$ denoted by $F_{(1, e_2, \cdots, e_k)}$, for  the functions $f_j \in \mathcal{S}^m({T^*(\partial D)})$ defined as above, with $f_1 = \tilde{H}$. Let us denote $\sigma_{F_{(1, e_2, \cdots, e_k)}} $ as the Liouville measure on $F_{(1, e_2, \cdots, e_k)} \subset T^*(\partial D)$, and the same $\sigma_{F_{(E, e_2, \cdots, e_k)}} $ as that of $F_{(E, e_2, \cdots, e_k)} := \{F = (f_1, \cdots, f_k) = (\rho(E), e_2, \cdots, e_k)\} $ when $E\neq 1$.
For notational sake, from now on, we write 
\[
\mathcal{F} = \{ e := (e_2,...,e_k) \in \mathbb{R}^{k-1} \,:\, F_{(1,e_2,...,e_k)} \neq \emptyset \}\,.
\]
For all such $e \in \mathcal{F}$, since $X_{f_j} f_i = 0 $ and $\mathcal{L}_{X_{f_j}} \omega^{d-1} = 0 $ for all $i, j \in \{1, \cdots, k\}$,  {\color{black} we have that
$$ d \sigma_{F_{(1,e)}} := \lim_{\varepsilon \rightarrow 0} \, (2\varepsilon)^{ - k } \, \chi_{\bigcup_{v \in [-\varepsilon,\varepsilon]^k }F_{(1, e) + v }}   \, d \sigma \otimes d \sigma^{-1} $$ is an invariant measure of the (joint) flow on $ F_{(1, e)}  $.}

With the previous notion of ergodicity in hand, we next consider $M_{X_{F}}(F_{(1, e)})$, which is the set of invariant measures on  $F_{(1, e)}$, and $M_{X_{F},\text{erg}}(F_{(1, e)} ) $, which is the set of ergodic measures on $F_{(1, e)}$.
Since $F_{(1, e)}$ has a countable base, the weak-* topology of $M_{X_{F}}(F_{(1, e)})$ is metrizable, and hence Choquet's theorem can be applied to obtain the following  generalized version of the ergodic decomposition theorem in \cite{ergodic}.
\begin{Lemma} \label{full_decomposition}
Given a probability measure $\eta \in M_{X_{F}}(M)$, there exists a probability measure $\nu_e \in M ( M_{X_{F},\text{erg}}(F_{(1, e)})) $ such that
\[
\eta = \int_{M_{X_{F},\text{erg}}(F_{(1, e)})}  \mu_e \,  d \nu_e (\mu_e ) \,.
\]
\end{Lemma}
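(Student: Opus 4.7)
The plan is a direct application of Choquet's theorem to the convex set of invariant probability measures on the leaf $F_{(1,e)}$, once we verify the three standard hypotheses: the set is (i) non-empty, (ii) convex and weak-$*$ compact, and (iii) metrizable. The hint in the statement already records that the last condition follows because $F_{(1,e)}$ has a countable base (so $\mathcal{C}(F_{(1,e)})$ is separable and the weak-$*$ topology on probability measures is metrizable by, e.g., a Wasserstein-type or L\'evy--Prokhorov metric). We interpret $\eta$ as an invariant probability measure supported on (or restricted to) $F_{(1,e)}$, since $F_{(1,e)}$ is an invariant submanifold under each flow generated by $X_{f_j}$, $j \in \{1,\dots,k\}$, a fact that is immediate from $X_{f_j} f_i = 0$.

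First, I would verify convexity and weak-$*$ compactness of the set $\mathcal{P}_{X_F}(F_{(1,e)})$ of $X_F$-invariant Borel probability measures on $F_{(1,e)}$. Convexity is trivial. For compactness, observe that $F_{(1,e)}$ is a level set of a proper pseudo-differential symbol under Assumption (A), so it is compact in $T^*(\partial D)$; Banach--Alaoglu then gives weak-$*$ compactness of probability measures, and invariance under each $\mathcal{T}_{t}$ is preserved under weak-$*$ limits because $\mathcal{T}_{t}$ is continuous (it is the time-$t$ map of a smooth flow on a compact manifold, defined for all $t \in [0,\infty)^k$ since $F_{(1,e)}$ is invariant and compact). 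Hence $\mathcal{P}_{X_F}(F_{(1,e)})$ is a metrizable, convex, compact subset of the locally convex space of signed measures.

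Second, I would identify the extremal points of $\mathcal{P}_{X_F}(F_{(1,e)})$ with $M_{X_F,\mathrm{erg}}(F_{(1,e)})$. One direction: if $\mu$ is not ergodic, there is a measurable set $A$ with $\mu(A\triangle \mathcal{T}_t(A))=0$ for all $t\in[0,\infty)^k$ and $0<\mu(A)<1$; then writing $\mu = \mu(A)\,\mu_A + (1-\mu(A))\,\mu_{A^c}$, where $\mu_A$ and $\mu_{A^c}$ are the normalized restrictions, both are $X_F$-invariant (by invariance of $A$), so $\mu$ is not extremal. Conversely, if $\mu = \tfrac{1}{2}(\mu_1+\mu_2)$ with $\mu_1\neq \mu_2$ both in $\mathcal{P}_{X_F}(F_{(1,e)})$, then $\mu_1\ll\mu$ with a density that is not $\mu$-a.e.\ constant; a standard argument (using that under ergodicity, any invariant $L^1$ density must be constant, via the joint Birkhoff ergodic theorem applied successively to each coordinate flow, which commute) shows this density is necessarily $X_F$-invariant, contradicting ergodicity.

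Third, with convexity, metrizable compactness, and identification of the extremal set established, Choquet's theorem yields a probability measure $\nu_e$ on the Borel $\sigma$-algebra of $M_{X_F,\mathrm{erg}}(F_{(1,e)})$ such that
\[
\eta \;=\; \int_{M_{X_F,\mathrm{erg}}(F_{(1,e)})} \mu_e \, d\nu_e(\mu_e),
\]
interpreted weakly, i.e.\ $\int \varphi\, d\eta = \int\!\!\int \varphi\, d\mu_e\, d\nu_e(\mu_e)$ for every $\varphi\in \mathcal{C}(F_{(1,e)})$. I expect the main technical obstacle to be the identification of extremals: one must verify that the joint Birkhoff ergodic theorem for the commuting family $\{X_{f_j}\}_{j=1}^{k}$ produces, at $\mu$-a.e.\ point, a well-defined time-average that coincides with the expectation under a single invariant density. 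This requires either appealing to the joint ergodic theorem for commuting measure-preserving $\mathbb{R}$-actions, or bootstrapping by applying the classical one-parameter Birkhoff theorem successively to each $X_{f_j}$ and using $[X_{f_i},X_{f_j}]=0$ (which follows from $\{f_i,f_j\}=0$) to commute the limits. The Borel measurability of the map $\mu_e \mapsto \int \varphi\, d\mu_e$ needed to make the Choquet integral well-defined is immediate from weak-$*$ measurability, so no further work is needed there.
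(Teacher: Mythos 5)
Your proposal is correct and follows essentially the same route as the paper: the paper justifies the lemma in one line by noting that $F_{(1,e)}$ has a countable base, so the weak-$*$ topology on $M_{X_F}(F_{(1,e)})$ is metrizable and Choquet's theorem applies, having already recorded (parenthetically, in the definition of ergodicity) that $M_{X_F,\mathrm{erg}}$ is the set of extreme points of the convex set $M_{X_F}$. Your write-up simply supplies the compactness, convexity, and extremal-point verifications that the paper leaves implicit.
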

Applying Lemma~\ref{full_decomposition} to $\sigma_{F_{1,e}} / \sigma_{F_{(1,e)}} (F_{(1, e)})$, we have a probability measure $\nu_e \in M ( M_{X_{{F}},\text{erg}} (F_{(1, e)} )$ such that
\[
\sigma_{{F_{(1,e)}}} = \sigma_{{F_{(1,e)}}} (F_{(1, e)}) \int_{M_{X_{{F}},\text{erg}} (F_{(1, e)}) }  \mu_e \,  d \nu (\mu_e) \,.
\]
Note by rescaling $F_{(E, e)} = E^{-1/2} F_{(1, e)} $, we have $ \sigma_{{F_{(E,e)}}}  = E^{\frac{1+k-d}{2} }  \sigma_{{F_{(1,e)}}}  $.
Then, from the smoothness of $F$ and the non-degeneracy of $DF$ (up to a co-dimensional $1$ subset), we can see that the decomposition 
\[
d \sigma \otimes d \sigma^{-1} = \int_{\mathcal{F} } \left( E^{\frac{1+k-d}{2} }  d E \otimes d \sigma_{{F_{(1,e)}}}  \right) \varphi(e) \, de
\]
holds for some density $\varphi \in L^1(\mathcal{F} , de) $ (with $\{\varphi=0 \}$ of measure $0$ with respect to $de$) via a change of variable formula.

For any $\mu_e \in  M_{X_{{F}},\text{erg}}(F_{(1, e)})$, we let $\mu_{(E,e)} : = [ m_{ E^{-1/2} } ]_{\#} \mu_e  \in M_{X_{{f}},\text{erg}} (F_{(E, e)})$ be the push-forward measure given by
\[
m_{ E^{-1/2} } : T^*(\partial D) \rightarrow   T^*(\partial D)  \,,\, \quad  (x,\xi) \mapsto (x, E^{-1/2}  \xi)  \,. 
\] 
It then holds that
\[
 \sigma \otimes \sigma^{-1}   =
 \int_{\mathcal{F} } \int_{(0,\infty) \times M_{X_{{F}},\text{erg}} (F_{(1, e)}) }  \mu_{(E,e)}   \, \sigma_{{F_{(1,e)}}} (F_{(1, e)}) \, h(e)  E^{\frac{1+k-d}{2} }  \, ( d E \otimes
 \,  d \nu_e)  ( E, \mu_e )  
 \, de \,.
\]

Next, we shall derive a general version of the quantum ergodicity on the leaves of the foliation by a quantum integrable system.
Since $[0,\infty)^k$ is a countable amenable semi-group, ergodicity of a measure $\mu$ on $T^*(\partial D) $ is equivalent to that for all $f \in L^2 (T^*(\partial D), d \mu)$ (cf. \cite{joint_ergodicity_1}):
\[
\lim_{T \rightarrow 0} \frac{1}{\prod_{j=1}^k |T_j|}\int_{\prod_{j=1}^k [0,T_j]} f \circ \mathcal{T}_{t}  \, dt = \int_{T^*(\partial D)} f d \mu 
\]
in the $L^2 (T^*(\partial D), d \mu)$ metric,
which is in fact the Von-Neumann's ergodic theorem \cite{vonneumann} in this scenario.
Following \cite{ACL}, we have the following application from Birkhoff \cite{birkhoff} and Von-Neumann's ergodic theorems \cite{vonneumann}.

\begin{Lemma} 
 \label{birkroff} 
Under assumption (A), {\color{black} when $d \geq 3$,} for any $r_j\leq s_j, j \in \{1, \cdots, k\}$ and all $a_0 \in {\tilde{\mathcal{S}}}^m(T^*X)$,
we have as $T \rightarrow \infty$,
\beqnx
\displaystyle{\frac{1}{ \prod_{j=1}^k T_j} \int \limits_{\prod_{j=1}^k [0, T_j]} a_{(x,\xi)}(t) dt \rightarrow_{a.e. d \sigma \otimes d \sigma^{-1} \text{ and } L^2(\cap_{j =1}^k\{ r_j \leq f_j \leq s_j\}, d \sigma \otimes d \sigma^{-1} )}}\ \  \bar{a}(x,\xi)
\eqnx
for $\bar{a} \in L^2(\cap_{j =1}^k\{ r_j \leq f_j \leq s_j\}, d \sigma \otimes d \sigma^{-1} ) $ and a.e. $ ( d E \otimes
 \,  d \nu_e) ( E, \mu_e )  
 \, de $, with
\beqnx
\bar{a} (x,\xi ) = \int_{F_{(E, e)} } a_0   \, d \mu_{(E,e)}  \quad   \text{ a.e. } d \mu_{(E,e)} .
\eqnx
\end{Lemma}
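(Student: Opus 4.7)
The plan is to combine a multiparameter (semi-)group version of the Birkhoff/Von-Neumann ergodic theorem with the ergodic decomposition in Lemma~\ref{full_decomposition}. First I would fix the invariant set $\Omega := \bigcap_{j=1}^k \{r_j \leq f_j \leq s_j \} \subset T^*(\partial D)$. Because the $f_j$ Poisson commute, $\Omega$ is invariant under each of the Hamiltonian flows generated by $X_{f_j}$, hence invariant under the joint semi-group action $\{\mathcal{T}_t\}_{t \in [0,\infty)^k}$ defined in \eqref{Transformations}. Moreover, since $f_1=\tilde{H}=\rho(H)$ is bounded away from $0$ on $\Omega$ (by Assumption (A), $H$ is bounded from below by a positive constant on a neighborhood, and $\rho$ is strictly increasing), and since the other $f_j \in \mathcal{S}^m(T^*(\partial D))$ with $m \geq -2$, one checks via the decay in $\xi$ and compactness of $\partial D$ that $\Omega$ has finite Liouville measure and that $a_0\in L^2(\Omega, d\sigma\otimes d\sigma^{-1})$ for any $a_0\in \tilde{\mathcal{S}}^m(T^*(\partial D))$.

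Next I would apply the multi-parameter Birkhoff--Von Neumann ergodic theorem to the action of the amenable semi-group $[0,\infty)^k$ on $(\Omega, d\sigma\otimes d\sigma^{-1})$, averaging along the Følner sequence of cubes $\prod_{j=1}^k [0, T_j]$. This yields the existence of $\bar{a}\in L^2(\Omega, d\sigma\otimes d\sigma^{-1})$ such that
\begin{equation*}
\frac{1}{\prod_{j=1}^k T_j}\int_{\prod_{j=1}^k [0,T_j]} a_{(x,\xi)}(t)\,dt\; \longrightarrow\; \bar{a}(x,\xi)
\end{equation*}
in both the $L^2(\Omega, d\sigma\otimes d\sigma^{-1})$ norm (Von-Neumann) and pointwise almost everywhere with respect to $d\sigma\otimes d\sigma^{-1}$ (Birkhoff), and such that $\bar{a}$ is invariant under every $\mathcal{T}_t$. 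The existence of such an $\bar{a}$ uses only that $[0,\infty)^k$ is a countable (after discretization) amenable semi-group with a Følner sequence of product boxes, so the standard Tempelman/Lindenstrauss type pointwise ergodic theorem applies, while the $L^2$ convergence is Von Neumann's theorem \cite{vonneumann} (whose mean-ergodic formulation was recalled just before the statement of Lemma~\ref{birkroff}).

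The remaining step is to identify $\bar{a}$ with $\int_{F_{(E,e)}} a_0 \, d\mu_{(E,e)}$. For this I would invoke the ergodic decomposition formula established right before the statement,
\begin{equation*}
d\sigma\otimes d\sigma^{-1}\bigl|_{\Omega}= \int_{\mathcal{F}}\int_{(0,\infty)\times M_{X_F,\mathrm{erg}}(F_{(1,e)})}\!\!\!\!\!\mu_{(E,e)}\, \sigma_{F_{(1,e)}}(F_{(1,e)})\, E^{\tfrac{1+k-d}{2}}\varphi(e)\,(dE\otimes d\nu_e)(E,\mu_e)\,de,
\end{equation*}
restricted to the bounded region $\Omega$. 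For each ergodic measure $\mu_{(E,e)}$ appearing in the decomposition, Birkhoff's theorem applied on the leaf $F_{(E,e)}$ gives that the time average converges $\mu_{(E,e)}$-a.e.\ to the spatial average $\int a_0\, d\mu_{(E,e)}$, which is a constant independent of the base point. Disintegrating the $d\sigma\otimes d\sigma^{-1}$-a.e.\ limit $\bar{a}$ against the above decomposition, Fubini together with uniqueness of the ergodic limit forces
\begin{equation*}
\bar{a}(x,\xi)=\int_{F_{(E,e)}} a_0\, d\mu_{(E,e)}\quad \text{for}\;\; \mu_{(E,e)}\text{-a.e. }(x,\xi),
\end{equation*}
for $(dE\otimes d\nu_e)(E,\mu_e)\, de$-almost every $(E,\mu_e,e)$, which is exactly the asserted formula.

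The main obstacle I foresee is not the pointwise convergence (which is classical for amenable flows and Følner sequences of cubes) but a careful bookkeeping of the disintegration: one needs to argue that the $d\sigma\otimes d\sigma^{-1}$-a.e.\ version of $\bar{a}$ produced by the multiparameter Birkhoff theorem agrees, leaf by leaf, with the ergodic-theorem limit on $(F_{(E,e)},\mu_{(E,e)})$. This is handled by choosing a countable dense subfamily of $a_0$'s in $L^2(\Omega)$, applying Birkhoff simultaneously on the full measure set of leaves, and then invoking the uniqueness of the Lebesgue-type decomposition above; the $L^2$ half follows in parallel from Von Neumann's theorem and dominated convergence on $\Omega$.
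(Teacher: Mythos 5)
Your proposal is correct and follows essentially the same route as the paper's proof: a global Birkhoff/Von Neumann ergodic theorem on the invariant set $\bigcap_{j}\{r_j\leq f_j\leq s_j\}$, followed by the ergodic decomposition of Lemma~\ref{full_decomposition} to push the null exceptional set down to almost every leaf, Birkhoff again on each $(F_{(E,e)},\mu_{(E,e)})$, and uniqueness of limits to identify $\bar{a}$ with the leafwise spatial average. The disintegration bookkeeping you flag as the main obstacle is exactly what the paper handles via the exceptional sets $\mathcal{E}$ and $\mathcal{E}_{\mu_{(E,e)}}$.
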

\begin{proof}
By Birkhoff and Von-Neumann's ergodic theorems \cite{birkhoff,vonneumann} on $\chi_{\{\cap_{j=1}^k\{ r_j \leq f_j \leq s_j\}\} } \,  d \sigma \otimes d \sigma^{-1}  $, we have as $T \rightarrow \infty$:
\beqnx
\frac{1}{\prod_{j=1}^k |T_j|}\int_{\prod_{j=1}^k [0,T_j]} a_{(x,\xi)}(t) dt \rightarrow_{a.e. d \sigma \otimes d \sigma^{-1} \text{ and } L^2(\{\cap_{j=1}^k\{ r_j \leq f_j \leq s_j\}\}, d \sigma \otimes d \sigma^{-1} ) } \bar{a}(x,\xi),
\eqnx
for some  $\bar{a} \in L^2 ( \cap_{j=1}^k\{ r_j \leq f_j \leq s_j\} , d \sigma \otimes d \sigma^{-1} ) $ invariant under the joint Hamiltonian flow. 
Set 
\[ \mathcal{E} := \left \{ (x,\xi) \in \cap_{j=1}^k\{ r_j \leq f_j \leq s_j\}   : \limsup_T \left| \frac{1}{\prod_{j=1}^k |T_j|}\int_{\prod_{j=1}^k [0,T_j]} a_{(x,\xi)}(t) dt  - \bar{a}(x,\xi) \right | > 0   \right \} \, . \] 
It is clearly seen that $ \sigma \otimes \sigma^{-1} ( \mathcal{E} ) = 0$.  Next, we can show by Lemma \ref{full_decomposition} that
\beqnx
&&\hspace*{-3mm} \int \limits_{\mathcal{F}  \bigcap \prod_{j=2}^k [r_j, s_j] } \int \limits_{[r_1,s_1] \times M_{X_{{F}},\text{erg}} (F_{(1, e)}) }  \mu_{(E,e)} ( \mathcal{E} )     \, \sigma_{{F_{(1,e)}}} (F_{(1, e)}) \, \varphi(e)  E^{\frac{1+k-d}{2} }  \, ( d E \otimes
 \,  d \nu_e) ( E, \mu_e )  
 \, de \\
&&= \sigma \otimes \sigma^{-1} ( \mathcal{E} ) = 0 \,.
\eqnx
 Since $\{\varphi=0\}$ is of measure zero with respect to $de$, we have, for a.e. $ ( d E \otimes
 \,  d \nu_e) ( E, \mu_e )  
 \, de $, we have $\mu_{(E,e)} (\mathcal{E} ) = 0 $.  Meanwhile, by using the Birkhoff and Von-Neumann's ergodic theorems \cite{birkhoff,vonneumann} again, we have on each leaf that
\beqnx
\frac{1}{\prod_{j=1}^k |T_j|}\int_{\prod_{j=1}^k [0,T_j]} a_{(x,\xi)}(t) dt  \rightarrow_{a.e. \mu_{(E,e)} \text{ and } L^2(F_{(E,e)} , d \mu_{(E,e)} )} 
 \int_{F_{(E,e)} } a_0   \, d \mu_{(E,e)}  \quad  \text{ as $T \rightarrow \infty$.}
\eqnx
Finally, setting
\[
\begin{split} 
\mathcal{E}_{\mu_{(E,e)}} := &\Bigg \{ (x,\xi) \in \cap_{j=1}^k\{ r_j \leq f_j \leq s_j \}  :\\
 &\limsup_T \left |\frac{1}{\prod_{j=1}^k |T_j|}\int_{\prod_{j=1}^k [0,T_j]} a_{(x,\xi)}(t) dt  -  \int_{F_{(E,e)} } a_0   \, d \mu_{(E,e)} \right | > 0   \Bigg \} \,, 
\end{split}
\]
we can show that $\mu_{(E,e)} ( \mathcal{E}_{\mu_{(E,e)}}  ) = 0 $.  Therefore, a.e. $  ( d E \otimes
 \,  d \nu_e) ( E, \mu_e )  
 \, de  $, $\mu_{(E,e)} \left(\mathcal{E} \bigcup \mathcal{E}_{\mu_{(E,e)} } \right) = 0 $.  By the uniqueness of the limit, the proof can be readily concluded.
\end{proof}


With the above preparations, we can establish the following theorem that shall play an important role in our subsequent analysis. 

\begin{Theorem}\label{thm:ergo1}
  Let $\mathcal{C} \subset \mathbb{R}^k$ be a compact convex polytope and $c_i := | \phi^i |_{H^{-\frac{1}{2}}(X, d \sigma)}^{-2} $. Then under assumption (A), {\color{black} when $d \geq 3$,} the following (variance-like) estimate holds as $h\rightarrow+0$:
\begin{equation}\label{eq:est1}
\begin{split}
& \frac{1}{ \sum_{ (  \lambda^i_1(h) , \cdots, \lambda^i_k(h) )\in \mathcal{C}} 1 } \sum_{  ( \lambda^i_1(h)  , \cdots, \lambda^i_k(h) )\in \mathcal{C}} 
c_i^2 \bigg|   \langle  A_h \,  \phi^i(h) ,  \phi^i(h) \rangle_{L^2(\partial D , d \sigma)} \\
&\hspace*{4cm} -  \langle \mathrm{Op}_{\bar{a}, h}   \,  \phi^i(h) ,  \phi^i(h) \rangle_{L^2(\partial D , d \sigma)}   \bigg|^2 \rightarrow 0.
\end{split}
 \end{equation}
\end{Theorem}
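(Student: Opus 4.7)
The strategy is to adapt the classical Shnirelman--Zelditch--Colin de Verdi\`ere quantum-ergodicity scheme to the joint spectrum of the quantum integrable system $(\mathrm{Op}_{f_1,h},\ldots,\mathrm{Op}_{f_k,h})$, chaining Corollary \ref{congugation} (Heisenberg--Egorov), the generalized Weyl's law \eqref{weyl}, and the Birkhoff--von Neumann averaging in Lemma \ref{birkroff}.

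The first step is to exploit that each $\phi^i(h)$ is a joint eigenfunction, so the operators $e^{\mathrm{i} t_j\mathrm{Op}_{f_j,h}/h}$ act on $\phi^i(h)$ by pure phases $e^{\mathrm{i} t_j\lambda_j^i(h)/h}$. Conjugation by the joint wave propagator therefore leaves the diagonal matrix coefficient invariant:
\[
\langle A_h(t)\phi^i,\phi^i\rangle_{L^2(\partial D, d\sigma)} \;=\; \langle A_h\phi^i,\phi^i\rangle_{L^2(\partial D, d\sigma)},\qquad t\in\textstyle\prod_{j=1}^k[0,T_j].
\]
Averaging this identity in $t$ over $\prod_j [0,T_j]$ and invoking Corollary \ref{congugation} to replace $A_h(t)$ by $\mathrm{Op}_{a(t),h}$ modulo $\mathcal{O}(h\Phi\mathrm{SO}_h^{m-1})$, I obtain
\[
\langle A_h\phi^i,\phi^i\rangle \;=\; \langle\mathrm{Op}_{a_T,h}\phi^i,\phi^i\rangle + \mathcal{O}(h)\,\|\phi^i\|_{L^2}^2,
\]
where $a_T(x,\xi):=\tfrac{1}{\prod_j T_j}\int_{\prod_j[0,T_j]}a_{(x,\xi)}(t)\,dt$ is the finite-time joint average of the principal symbol.

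Subtracting $\langle\mathrm{Op}_{\bar a,h}\phi^i,\phi^i\rangle$, squaring, and applying the Cauchy--Schwarz bound $|\langle B\phi,\phi\rangle|^2\le\|\phi\|^2\langle B^{*}B\phi,\phi\rangle$ together with the symbol identity $\mathrm{Op}_{a_T-\bar a,h}^{*}\mathrm{Op}_{a_T-\bar a,h}=\mathrm{Op}_{|a_T-\bar a|^2,h}+\mathcal{O}(h\Phi\mathrm{SO}_h^{2m-1})$, I reduce the variance of the matrix element to a diagonal matrix element of a positive pseudo-differential operator. Using that $c_i^{2}\|\phi^i\|_{L^2}^{4}=1$, summing over those $i$ with $(\lambda_1^i(h),\ldots,\lambda_k^i(h))\in\mathcal{C}$, and invoking the generalized Weyl's law \eqref{weyl} together with \eqref{weyl2}, I arrive at the key estimate
\[
\frac{1}{\sum_{(\lambda^i)\in\mathcal{C}}1}\sum_{(\lambda^i)\in\mathcal{C}}c_i^{2}\,\bigl|\langle(A_h-\mathrm{Op}_{\bar a,h})\phi^i,\phi^i\rangle\bigr|^2\;\le\;\frac{\int_{F\in\mathcal{C}}|a_T-\bar a|^2\,d\sigma\otimes d\sigma^{-1}}{\int_{F\in\mathcal{C}}d\sigma\otimes d\sigma^{-1}}+o_T(1),
\]
as $h\to 0^{+}$ for each fixed $T$, where $o_T(1)$ absorbs the Egorov and symbol-calculus remainders.

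The conclusion follows by sending first $h\to 0^{+}$ for $T$ fixed (which is safely within the Ehrenfest window $|t|\le C\log(1/h)$ on which Corollary \ref{congugation} is valid) and then $T\to\infty$: by Lemma \ref{birkroff}, $a_T\to\bar a$ in $L^2(F^{-1}(\mathcal{C}),d\sigma\otimes d\sigma^{-1})$, so the right-hand side above tends to zero, giving \eqref{eq:est1}. The main obstacle is precisely this non-commutative order of limits together with the book-keeping of error terms: one has to verify that every contribution of the form $c_i^{2}\cdot\mathcal{O}(h)\|\phi^i\|_{L^2}^{4}$ aggregates, via \eqref{weyl2}, to $\mathcal{O}(h)$ times the joint Weyl count and so vanishes after normalization; and one has to appeal to the ergodic-decomposition machinery of Lemma \ref{full_decomposition} to guarantee that $\bar a$ is well-defined as an $L^2$ limit on the level sets of the moment map $F$, without presupposing ergodicity on a single leaf of the Lagrangian foliation.
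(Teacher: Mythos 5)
Your proposal follows essentially the same route as the paper's proof: invariance of the diagonal matrix elements under the joint wave propagator via Corollary \ref{congugation}, time-averaging to replace $A_h$ by $\mathrm{Op}_{a_T,h}$, a Cauchy--Schwarz reduction to a diagonal element of $\Xi^*\Xi$, the generalized Weyl's law \eqref{weyl}--\eqref{weyl2} to convert the averaged variance into a phase-space integral of $|a_T-\bar a|^2$, and finally Lemma \ref{birkroff} to kill that integral as $T\to\infty$ after $h\to 0^+$. The argument and the order of limits match the paper's; no gap.
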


\begin{proof}
Via considering the Hamiltonian flow of the principle symbol, we can lift the Birkhoff and Von-Neumann to the operator level.    Set $A_{h}(0) = A_{h}$. From the definition of $\phi^i(h)$, we have, for each $i$,
\begin{equation}\label{eq:est2}
\begin{split}
 & \langle A_{h}(t) \phi^i(h) , \phi^i(h) \rangle_{L^2(\partial D, d \sigma)}  \\
=& \langle  A_{{h}}(0) \, e^{ \frac{\mathrm{i} t_1}{h}  \rho \left( [ \mathcal{K}^*_{h,\partial D} ]^2 \right)  - \sum_{j=2}^k \frac{\mathrm{i} t_j}{{h}} L_{j,h} }   \phi^i(h) , \, e^{ \frac{\mathrm{i} t_1}{h}  \rho \left( [ \mathcal{K}^*_{h,\partial D} ]^2 \right)  - \sum_{j=2}^k \frac{\mathrm{i} t_j}{{h}} L_{j,h} }  \phi^i(h) \rangle_{L^2(\partial D, d \sigma)}   + \mathcal{O}_t (h) \\
=&  \langle A_{h}\,  \phi^i(h) ,  \phi^i(h) \rangle_{L^2(X, d \sigma)}+ \mathcal{O}_t (h) ,
\end{split}
\end{equation}
where we make use of Proposition \ref{prop:4.1} as well as the definition of the NP eigenfunctions in \eqref{eq:eg1}.  Averaging both sides of \eqref{eq:est2} with respect to $T$, we can arrive at
\beqnx
 \left \langle  \Upsilon \phi^i(h) , \phi^i(h) \right \rangle_{L^2(\partial D, d \sigma)}  =  \langle A_{h}   \phi^i(h) ,  \phi^i(h) \rangle_{L^2(X , d \sigma)} + \mathcal{O}_T (h), 
\eqnx
where
\[
\Upsilon:=\frac{1}{ \prod_{j=1}^k |T_j|}\int \limits_{\prod_{j=1}^k [0,T_j] }  A_{h}(t) dt.
\]
By using Proposition \ref{prop:4.1} again, one can directly verify that
\beqnx
\frac{1}{ \prod_{j=1}^k |T_j|}\int \limits_{\prod_{j=1}^k [0,T_j] }  A_{h}(t) dt  -   \text{Op}_{\bar{a}, h} = \mathrm{Op}_{ \frac{1}{ \prod_{j=1}^k |T_j|}\int \limits_{\prod_{j=1}^k [0,T_j] }  a_{(x,\xi)}(t) dt  - \bar{a} } + \mathcal{O}_T (h ). 
\eqnx
Next by using the Cauchy-Schwarz inequality, we have
\begin{equation}\label{eq:sum1}
\begin{split}
 &\left|  \frac{ \left \langle   \text{Op}_{\bar{a}, h}  \phi^i(h) , \phi^i(h) \right \rangle_{L^2(\partial D, d \sigma)} }{{\langle \phi^i(h) , \phi^i(h) \rangle_{L^2(\partial D, d \sigma)}} } - \frac{ \langle  A_{{h}}     \phi^i(h) ,  \phi^i(h) \rangle_{L^2(\partial D, d \sigma)} }{{\langle \phi^i(h) , \phi^i(h) \rangle_{L^2(\partial D, d \sigma)}} } \right|^2 \\
\leq & \frac{  \left \langle  \Xi^* \Xi  \phi^i(h) ,  \phi^i(h) \right \rangle_{L^2(\partial D, d \sigma)} }{{\langle \phi^i(h) , \phi^i(h) \rangle_{L^2(\partial D, d \sigma)}} } + \mathcal{O}_T (h^2),  
\end{split}
\end{equation}
with
\[
\Xi:=\Upsilon  -   \text{Op}_{\bar{a}, h}.
\]
Therefore, summing up over the joint spectrum of $\phi^i$ of \eqref{eq:sum1} and applying \eqref{weyl} and \eqref{weyl2}, we have
\begin{equation}\label{eq:ee4}
\begin{split}
& \frac{1}{ \sum_{ ( | \lambda^i_1(h) | , \cdots, \lambda^i_k(h) )\in \mathcal{C}} 1 }
  \sum_{ ( | \lambda^i_1(h) | , \cdots, \lambda^i_k(h) )\in \mathcal{C}}   c_i^2 \bigg|   \langle  A_h \,  \phi^i(h) ,  \phi^i(h) \rangle_{L^2(\partial D , d \sigma)}\\
  &\hspace*{4cm}  -  \langle \text{Op}_{\bar{a}, h}   \,  \phi^i(h) ,  \phi^i(h) \rangle_{L^2(\partial D , d \sigma)}   \bigg|^2 \\
\leq&
\frac{\int_{ \{F = (f_1, \cdots, f_k )\in \mathcal{C}\} } \left| \frac{1}{ \prod_{j=1}^k |T_j|}\int \limits_{\prod_{j=1}^k [0,T_j] }  a_{(x,\xi)}(t) dt  - \bar{a} \right|^2 \, d \sigma \otimes d \sigma^{-1}}{\int_{ \{F = (f_1, \cdots, f_k )\in \mathcal{C}\} } \, d \sigma \otimes d \sigma^{-1}} + o_{\mathcal{C},T}(1).
\end{split}
\end{equation}
Finally, by noting that the first term at the right-hand side of \eqref{eq:ee4} goes to zero as $T = (T_1,...,T_k)$ goes to infinity, one can readily have \eqref{eq:est1}, which completes the proof.  
\end{proof}

With Theorem~\ref{thm:ergo1}, together with Chebeychev's trick and a diagonal argument, we can have the following quantum ergodicity result, {\color{black} which generalizes the relevant results in \cite{erg1,erg11,Toth02,Toth13,erg32,erg33,erg34,erg35,erg2,erg_a,erg_b,trace}.}
\begin{Corollary}
  Let $\mathcal{C} \subset \mathbb{R}^k$ be a compact convex polytope. Under assumption (A), {\color{black} when $d \geq 3$,} there exists $S(h) \subset J(h) := \{i \in \mathbb{N} : (\lambda^i_1(h), \cdots, \lambda^i_k(h)) \in \mathcal{C}   \}$ such that 
for all $a_0 \in S^{m}(T^*X)$, we have as $h\rightarrow +0$,
\beqn
\max_{i \in S(h)} c_i \left|   \left \langle ( A_h - \mathrm{Op}_{\bar{a}, h} )  \,  \phi^i(h) ,  \phi^i(h) \right \rangle_{L^2(X , d \sigma)} \right| = o_{\mathcal{C}}(1) \, \text{ and } \, \frac{\sum_{i \in S(h) } 1 }{ \sum_{i \in J(h) } 1 } = 1 + o_{\mathcal{C}}(1) \,.
\label{ergodicity}
\eqn
It is noted that the choice of $S(h) $ is independent of $a_0$. 
\end{Corollary}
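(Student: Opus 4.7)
The plan is to derive the corollary from the variance-like bound in Theorem~\ref{thm:ergo1} via the standard Chebyshev-plus-diagonal extraction argument. First, for each $a_0 \in \mathcal{S}^m(T^*(\partial D))$ and each $\varepsilon > 0$, introduce the bad set
$$B_{a_0,\varepsilon}(h) := \Big\{ i \in J(h) \,:\, c_i \big|\langle (A_h - \mathrm{Op}_{\bar{a},h}) \phi^i(h), \phi^i(h)\rangle_{L^2(\partial D,d\sigma)}\big| > \varepsilon \Big\}.$$
Chebyshev's inequality applied to \eqref{eq:est1} immediately yields
$$\frac{\#B_{a_0,\varepsilon}(h)}{\#J(h)} \leq \frac{1}{\varepsilon^2}\cdot \frac{1}{\#J(h)}\sum_{i \in J(h)} c_i^2 \big|\langle (A_h - \mathrm{Op}_{\bar{a},h}) \phi^i(h), \phi^i(h)\rangle\big|^2 = o_{\mathcal{C},a_0,\varepsilon}(1),$$
so for each individual $a_0$ we already have a density-one subsequence along which the matrix coefficient vanishes.

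To upgrade this into a single $S(h)$ valid for every $a_0$, I fix a countable family $\{a_0^{(n)}\}_{n \in \mathbb{N}}$ that is dense in $\mathcal{S}^m(T^*(\partial D))$ in a family of seminorms controlling the $L^2$-operator norms of the quantizations via Calder\'on--Vaillancourt. Choose a decreasing null sequence $h_n \to 0^+$ so that
$$\frac{\#B_{a_0^{(n)},\, 1/n}(h)}{\#J(h)} < \frac{1}{n} \qquad \text{for all } h < h_n,\; n \in \mathbb{N},$$
and define
$$S(h) := J(h) \setminus \bigcup_{n \,:\, h < h_n} B_{a_0^{(n)},\, 1/n}(h).$$
This choice depends neither on $a_0$ nor on the symbol class, and a routine counting gives $\#S(h)/\#J(h) = 1 + o_{\mathcal{C}}(1)$, which settles the density statement.

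For the operator statement, given any $a_0$ I approximate by some $a_0^{(n)}$ from the dense family and decompose
$$A_h - \mathrm{Op}_{\bar{a},h} = (A_h - A_h^{(n)}) + (A_h^{(n)} - \mathrm{Op}_{\overline{a^{(n)}},h}) + (\mathrm{Op}_{\overline{a^{(n)}},h} - \mathrm{Op}_{\bar{a},h}).$$
The middle term is bounded by $1/n$ on $S(h)$ for $h < h_n$ by construction. The first and third terms are then controlled uniformly in $i$ via Calder\'on--Vaillancourt, using the normalization $c_i \|\phi^i(h)\|_{L^2(\partial D,d\sigma)}^2 = 1$ that is built into the definition of $c_i$. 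Letting $n \to \infty$ concludes the estimate for arbitrary $a_0$.

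The main technical obstacle I anticipate is the regularity of the time-average $\bar{a}$: Lemma~\ref{birkroff} only provides it as an $L^2$ function on the sublevel sets of $F$, not as an element of a H\"ormander class to which Calder\'on--Vaillancourt applies verbatim. The natural remedy is to multiply $a_0$ by a smooth cutoff compactly supported in a phase-space neighborhood of $F^{-1}(\mathcal{C})$ (Assumption (A) guarantees that $F^{-1}(\mathcal{C})$ stays away from the zero section of $T^*(\partial D)$, so such a cutoff is symbolic), and to smoothen the Birkhoff average by pre-convolving against a small mollifier in the flow variable before extracting the symbol. With this regularization the approximation $\overline{a^{(n)}} \to \bar{a}$ is genuine in a symbol seminorm and the diagonal extraction proceeds without further issue; the argument is then a routine adaptation of Shnirelman--Zelditch--Colin de Verdi\`ere, the novelty being that the spectral window is the preimage of the polytope $\mathcal{C}$ under the joint moment map rather than an interval of a single operator.
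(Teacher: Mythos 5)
Your proposal is exactly the argument the paper intends: the paper offers no written proof of this corollary beyond the remark that it follows from Theorem~\ref{thm:ergo1} ``together with Chebeychev's trick and a diagonal argument,'' and your Chebyshev-plus-diagonal extraction over a countable dense family of symbols is the standard Shnirelman--Zelditch--Colin de Verdi\`ere route that this remark points to. Your closing worry about the regularity of $\bar a$ is legitimate, but note that it already afflicts the statement of Theorem~\ref{thm:ergo1} itself (which writes $\mathrm{Op}_{\bar a,h}$ for an $\bar a$ that Lemma~\ref{birkroff} only produces in $L^2$), so it is an upstream issue rather than a gap specific to your deduction; your mollification remedy is a reasonable patch.

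One concrete slip in your bookkeeping: with the thresholds $\#B_{a_0^{(n)},1/n}(h)/\#J(h)<1/n$ for $h<h_n$, the set $\{n: h<h_n\}$ is $\{1,\dots,N(h)\}$ with $N(h)\to\infty$, and the union bound gives a removed fraction of at most $\sum_{n\le N(h)}1/n\sim\log N(h)$, which does not tend to zero. The standard fix is to form the union before estimating: for each $n$ the density of $\bigcup_{j\le n}B_{a_0^{(j)},1/n}(h)$ tends to $0$ as $h\to 0$ by finitely many applications of Chebyshev, so one can choose $h_n\downarrow 0$ with this density below $1/n$ for $h<h_n$, and set $S(h):=J(h)\setminus\bigcup_{j\le N(h)}B_{a_0^{(j)},1/N(h)}(h)$. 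With that correction the density claim and the approximation step for general $a_0$ go through as you describe.
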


{\color{black} To our best knowledge, the first time when a quantum integrable system is considered to show eigenfunction concentration on Lagrangian submanifolds is in \cite{Toth02}, where the Laplacian eigenfunctions are discussed instead.}

\begin{Remark}
It is indeed a bit paradoxical to still call the above theorem as that of quantum ergodicity when we now have a quantum integrable system. Since as is customarily understood, the case of (complete) integrable system and that of ergodicity is almost on the opposite side of the spectrum in the description of a dynamical system.
However, our discussion is on the ergodicity on the leaves of the foliation given by the integrable system (e.g. the Lagrangian tori if we have a complete integrable system), and therefore no paradox emerges.
\end{Remark}

\section{Locolization/concentration of plasmon resonances in electrostatics}

In this section, we are in a position to present one of our main results on the localization/concentration of plasmon resonances in electrostatics. 

\subsection{Consequences of generalized Weyl's law and quantum ergodicity}  
In the following, we let $\sigma_{x,F_{(1,e)}}$ signify the Liouville measure on $ F_{(1,e)}(x) := \{F(x, \cdot) = (\rho(1), e ) \} \subset T^*_x (\partial D)$. By the generalized Weyl's law in Section~\ref{sect:3}, we can obtain the following result, which characterizes the local behaviour of the NP eigenfunctions and their relative magnitude.

\begin{Theorem}
\label{theorem1}
Given any $x \in \partial D$, we consider $ \{ \chi_{x,\delta} \}_{\delta > 0}$ being a family of smooth nonnegative bump functions compactly supported in $B_{\delta} (x)$ with $ \int_{\partial D} \chi_{p,\delta} \, d \sigma = 1$.
Under Assumption (A), {\color{black} when $d \geq 3$,} fixing a compact convex polytope $ \mathcal{C}  \subset \mathcal{F} \subset \mathbb{R}^{k-1}$, $[r,s] \subset{\mathbb{R}}$, $\alpha\in\mathbb{R}$ and $p, q \in \partial D$, there exists a choice of $\delta(h)$ depending on $\mathcal{C},p,q, r,s$ and $\alpha$ such that, as $h\rightarrow +0$, we have $\delta(h) \rightarrow 0$ and
\beqn\label{concentration1}
\begin{split}
&\frac{ \sum_{(\lambda^i_1 (h), \cdots, \lambda^i_k (h)) \in [r,s] \times \mathcal{C}} c_i  \int_{\partial D} \chi_{p,\delta (h) }(x) | | D |^{\alpha}   \, \phi^i (x) |^2 d \sigma(x) }{ \sum_{(\lambda^i_1 (h), \cdots, \lambda^i_k (h)) \in [r,s] \times \mathcal{C}} c_i  \int_{\partial D} \chi_{q,\delta (h) }(x) | | D |^{\alpha}   \, \phi^i (x) |^2 d \sigma(x) }\medskip\\
 =&  \frac{     \int_{\mathcal{F} } \int_{ F_{(1,e)}(p)  }  | \xi |_{g(y)}^{1+ 2 \alpha } d \sigma_{p,F_{(1,e)}} \, h(e) d e  }{   \int_{\mathcal{F} } \int_{F_{(1,e)}(q)  }  | \xi |_{g(y)}^{1+ 2 \alpha } d \sigma_{q,F_{(1,e)}} \, h(e) d e   } + o_{\mathcal{C},r,s,p,q,\alpha}(1),
\end{split}
\eqn
where $c_i := | \phi^i |_{H^{-\frac{1}{2}}(\partial D , d \sigma)}^{-2} $.  In particular, if $\alpha = - \frac{1}{2}$, the RHS term of \eqref{concentration1} is the ratio between the volumes of $\bigcup_{e \in \mathcal{F}} F_{(1,e)}(\cdot) $ at the respective points.
\end{Theorem}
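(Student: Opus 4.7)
The plan is to recast the localized spatial integrals as expectation values of $h$-pseudodifferential operators, apply the generalized Weyl's law \eqref{weyl} to both numerator and denominator, cancel the common $h$-prefactors in the ratio, and finally extract the pointwise $\delta\to 0$ limit through the coarea-type fiber decomposition of Section~\ref{sect:3}. Concretely, self-adjointness of $|\mathcal{D}|^\alpha$ together with $\phi^i = |\mathcal{D}|^{1/2}\phi^i(h)$ from \eqref{eq:eg1} gives
$$\int_{\partial D}\chi_{p,\delta}(x)\bigl||\mathcal{D}|^\alpha\phi^i(x)\bigr|^2\,d\sigma(x) = \bigl\langle A^{p,\delta}_h \phi^i(h),\, \phi^i(h)\bigr\rangle_{L^2(\partial D, d\sigma)},$$
where $A^{p,\delta}_h := |\mathcal{D}|^{\alpha+1/2}\chi_{p,\delta}|\mathcal{D}|^{\alpha+1/2}$ has principal symbol $\chi_{p,\delta}(x)|\xi|^{1+2\alpha}_{g(x)}$. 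The two extra half-powers of $|\mathcal{D}|$ produced by the identification $\phi^i = |\mathcal{D}|^{1/2}\phi^i(h)$ are the origin of the exponent $1+2\alpha$ (rather than $2\alpha$) featured on the right-hand side of \eqref{concentration1}. Rewriting $|\mathcal{D}|^\beta = h^{-\beta}\mathrm{Op}_{|\xi|^\beta,h}$ in semiclassical form yields $A^{p,\delta}_h = h^{-(2\alpha+1)}\mathrm{Op}_{a^{p,\delta},h}$ modulo lower order, with symbol $a^{p,\delta}(x,\xi) = \chi_{p,\delta}(x)|\xi|^{1+2\alpha}_{g(x)}$; the analogous operator $A^{q,\delta}_h$ is defined at $q$.

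Using $c_i = \|\phi^i\|_{H^{-1/2}}^{-2} = \|\phi^i(h)\|_{L^2}^{-2}$, the generalized Weyl's law \eqref{weyl} applied to the polytope $[r,s]\times\mathcal{C}$ rewrites the numerator of \eqref{concentration1} as
$$h^{-(2\alpha+1)}(2\pi h)^{k-d}\int_{\{F\in [r,s]\times\mathcal{C}\}}\chi_{p,\delta}(x)|\xi|^{1+2\alpha}_{g(x)}\,d\sigma\otimes d\sigma^{-1} + o_{\mathcal{C},\delta}\bigl(h^{-(2\alpha+1)}(2\pi h)^{k-d}\bigr),$$
with the identical expression at $q$ (featuring $\chi_{q,\delta}$). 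Since the prefactor $h^{-(2\alpha+1)}(2\pi h)^{k-d}$ is independent of $p,q$, it cancels upon forming the ratio. Since $\int_{\partial D}\chi_{p,\delta}\,d\sigma = 1$ and $\chi_{p,\delta}$ concentrates at $p$, Fubini combined with the continuity of the inner $\xi$-integral in $x$ yields, as $\delta\to 0$,
$$\int_{\{F\in [r,s]\times\mathcal{C}\}}\chi_{p,\delta}(x)|\xi|^{1+2\alpha}_{g(x)}\,d\sigma\otimes d\sigma^{-1} \longrightarrow \int_{\{F(p,\cdot)\in [r,s]\times\mathcal{C}\}}|\xi|^{1+2\alpha}_{g(p)}\,d\xi.$$

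Next, applying the pointwise analog on $T^*_p(\partial D)$ of the fiber decomposition of Section~\ref{sect:3}---invoking the rescaling $F_{(E,e)}(p) = E^{-1/2}F_{(1,e)}(p)$, under which $|\xi|^{1+2\alpha}$ transforms by $E^{-(1+2\alpha)/2}$, the $(d-1-k)$-dimensional fiber volume by a further power of $E$, and the coarea Jacobian of the moment map by yet another---the right-hand side separates into an $E$-integration over $[r,s]$ producing a constant $\kappa(r,s,\alpha,d,k)$ independent of $p$, multiplied by
$$\int_{\mathcal{C}} h(e) \int_{F_{(1,e)}(p)}|\xi|^{1+2\alpha}_{g(p)}\,d\sigma_{p,F_{(1,e)}}\,de,$$
where $h(e)$ denotes the density $\varphi(e)$ inherited from the decomposition of the ambient Liouville measure in Section~\ref{sect:3} (and is distinct from the semiclassical parameter). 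The identical formula holds with $p$ replaced by $q$ carrying the same $\kappa$, so $\kappa$ cancels in the ratio and gives exactly \eqref{concentration1}.

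The principal technical obstacle is the coupled limit $h\to 0$ and $\delta\to 0$: the Weyl's law remainder $o_{\mathcal{C},\delta}(1)$ depends on the Calder\'on--Vaillancourt seminorms of $a^{p,\delta}$, which blow up like $\delta^{-|\beta|}$ due to the derivatives of $\chi_{p,\delta}$, while the localization step requires $\delta(h)\to 0$. A diagonal argument analogous to the one producing the density-one subsequence in the Corollary following Theorem~\ref{thm:ergo1} provides an admissible rate $\delta(h)\to 0$, slow enough to absorb the Weyl remainder yet fast enough to enforce concentration at $p$ and at $q$; achieving this compatible rate is the main quantitative balance in the proof.
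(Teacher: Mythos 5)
Your proposal is correct and follows essentially the same route as the paper's proof: both choose the symbol $a(x,\xi)=\chi_{p,\delta}(x)\,|\xi|^{1+2\alpha}_{g(x)}$ (the paper via the factorization $\mathrm{Op}_{a,h}=h^{1+2\alpha}|D|^{1/2+\alpha}\mathrm{Op}_{\chi_{p,\delta},h}|D|^{1/2+\alpha}-h\,\mathrm{Op}_{\tilde a_{p,\delta},h}$, you by starting from the operator and reading off its principal symbol), apply the generalized Weyl's law \eqref{weyl} to numerator and denominator, cancel the $p,q$-independent prefactor in the ratio, choose $\delta(h)\to 0$ slowly enough to control the remainders while forcing localization, and finish with the rescaling $F_{(E,e)}=E^{-1/2}F_{(1,e)}$ so that the $E$-integral over $[r,s]$ cancels between $p$ and $q$. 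Your explicit discussion of the $h$--$\delta$ balance via symbol seminorms makes the same point the paper handles by applying \eqref{weyl} a second time to the remainder symbol $\tilde a_{p,\delta}$ and then tuning $\delta(h)$.
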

\begin{proof}
Taking $p\in \partial D$, we consider $a(x,\xi) := \chi_{p,\delta}(x)  | \xi |^{1+ 2 \alpha}_{g(x)}  $ in \eqref{weyl}.
With the fact that $\mathrm{Op}_{a,h} = h^{1+ 2 \alpha}  | D |^{1/2+ \alpha} \mathrm{Op}_{\chi_{p,\delta}(x),h} | D |^{1/2+ \alpha} - h \mathrm{Op}_{\tilde{a}_{p,\delta},h} $ for some $\tilde{a}_{p,\delta} \in S^{2 \alpha}(T^*(\partial D)) $, we have after applying \eqref{weyl} once more upon $\tilde{a}_{p,\delta}$:
\begin{equation}\label{eq:nnn1}
\begin{split}
&  (2 \pi h )^{ (d  + 2 \alpha ) }  \sum_{(\lambda^i_1 (h), \cdots, \lambda^i_k (h)) \in [r,s] \times \mathcal{C}} c_i  \int_{\partial D} \chi_{p,\delta}(x) | | D |^{\alpha}   \, \phi^i (x) |^2 d \sigma(x)  \\
= & \int_{ \{ (f_1, \cdots, f_k) \in [r,s] \times \mathcal{C} \} } \chi_{p,\delta}(x)  | \xi |^{1+ 2 \alpha}_{g(x)}  \, d \sigma \otimes d \sigma^{-1} \\
&\qquad\qquad+  h \int_{ \{(f_1, \cdots, f_k) \in [r,s] \times \mathcal{C} \} } \tilde{a}_{p,\delta} \, d \sigma \otimes d \sigma^{-1} + o_{\mathcal{C}, r,s,\alpha}(1). 
\end{split}
\end{equation}
With \eqref{eq:nnn1}, we have, after choosing another point $q \in \partial D$ and taking a quotient between the two, that
\beqnx
& & \frac{ \sum_{(\lambda^i_1 (h), \cdots, \lambda^i_k (h)) \in [r,s] \times \mathcal{C}} c_i  \int_{\partial D} \chi_{p,\delta}(x) | | D |^{\alpha}   \, \phi^i (x) |^2 d \sigma (x) }{ \sum_{(\lambda^i_1 (h), \cdots, \lambda^i_k (h)) \in [r,s] \times \mathcal{C}} c_i  \int_{\partial D} \chi_{q,\delta}(x) | | D |^{\alpha}   \, \phi^i (x) |^2 d \sigma (x) } \\
&=&  \frac{ \int_{ \{ (f_1, \cdots, f_k) \in [r,s] \times  \mathcal{C} \} } \chi_{p,\delta}(x)  | \xi |^{1+ 2 \alpha}_{g(x)}  \, d \sigma \otimes d \sigma^{-1} +  h \int_{ \{ (f_1, \cdots, f_k) \in [r,s] \times \mathcal{C} \} } \tilde{a}_{p,\delta} \, d \sigma \otimes d \sigma^{-1} }{ \int_{ \{ (f_1, \cdots, f_k) \in [r,s] \times \mathcal{C} \} } \chi_{q,\delta}(x)  | \xi |^{1+ 2 \alpha}_{g(x)}  \, d \sigma \otimes d \sigma^{-1} +  h \int_{ \{ (f_1, \cdots, f_k) \in [r,s] \times \mathcal{C} \} } \tilde{a}_{q,\delta}\, d \sigma \otimes d \sigma^{-1}} + o_{\mathcal{C},r,s,\alpha}(1).  
\eqnx
Now, for any given $h$, we can make a choice of $\delta (h)$ depending on $\mathcal{C},r,s,p,q,\alpha$ such that as $h \rightarrow +0$, we have $\delta (h) \rightarrow 0$ (much slower than $h$) and 
\[
\left| h \int_{ \{ (f_1, \cdots, f_k) \in [r,s] \times \mathcal{C} \} } \tilde{a}_{p,\delta (h)} \, d \sigma \otimes d \sigma^{-1}  \right| 
+  \left| h \int_{ \{(f_1, \cdots, f_k) \in [r,s] \times \mathcal{C} \} } \tilde{a}_{q,\delta (h)} \, d \sigma \otimes d \sigma^{-1} \right|  \rightarrow 0 \,.
\]
We also realize as $h \rightarrow +0$, with this choice of $\delta (h)$ that $\delta (h) \rightarrow 0$, and one has for $y = p,q$ that
\[
 \int_{ \{(f_1, \cdots, f_k) \in [r,s] \times \mathcal{C} \} } \chi_{y,\delta(h) }(x)  | \xi |^{1+ 2 \alpha}_{g(x)}  \, d \sigma \otimes d \sigma^{-1}  \rightarrow  \int_{ \{(f_1(y, \cdot), \cdots, f_k(y, \cdot))\in [r,s] \times \mathcal{C}\} } | \xi |^{1+ 2 \alpha}_{g(y)}  d \sigma^{-1} \,.
\]
Therefore, we have
\beqnx
&&\frac{ \sum_{(\lambda^i_1 (h), \cdots, \lambda^i_k (h)) \in [r,s] \times \mathcal{C}} c_i  \int_{\partial D} \chi_{p,\delta(h) }(x) | | D |^{\alpha}   \, \phi^i (x) |^2 d \sigma(x) }{ \sum_{(\lambda^i_1 (h), \cdots, \lambda^i_k (h)) \in [r,s] \times \mathcal{C}} c_i  \int_{\partial D} \chi_{q,\delta (h) }(x) | | D |^{\alpha}   \, \phi^i (x) |^2 d \sigma(x) }\\
& = & \frac{   \int_{ \{(f_1(p, \cdot), \cdots, f_k(p, \cdot))\in [r,s] \times \mathcal{C}\} } | \xi |^{1+ 2 \alpha}_{g(p)}  d \sigma^{-1}  }{   \int_{ \{ (f_1(q, \cdot), \cdots, f_k(q, \cdot))\in [r,s] \times \mathcal{C}\} } | \xi |^{1+ 2 \alpha}_{g(q)}  d \sigma^{-1}  } + o_{\mathcal{C}, r,s,p,q,\alpha}(1)  \,.
\eqnx
To conclude our proof, we realize that for all $y = p,q$,
\[
\begin{split}
 &\int_{ \{ (f_1(y, \cdot), \cdots, f_k(y, \cdot))\in [r,s] \times \mathcal{C}\} } | \xi |^{1+ 2 \alpha}_{g(y)}  d \sigma^{-1}\\
  =& \left(  \int_{r}^s E^{-\frac{k}{2} - \alpha - \frac{d}{2} } d E \right) \left( \int_{\mathcal{F} } \int_{  F_{(1,e)}(y)  }  | \xi |_{g(y)}^{1+ 2 \alpha } d \sigma_{y,F_{(1,e)}} \, h(e) d e \right),
\end{split}
\]
where we recall $h \in L^1(\mathcal{F} , de) $ and $d-1-k$ is the generic dimension of $F_{(1,e)}(y)$.

The proof is complete. 
\end{proof}

Theorem~\ref{theorem1} states that, given $p,q \in \partial D$, the relative magnitude between a $c_i$-weighted sum of a weighed average of $| |D|^{\alpha} \phi^i |^2$ over a small neighborhood of $p$ to that of $q$ asymptotically depends on the ratio between the weighted volume of $ \{ (f_1(p, \cdot), \cdots, f_k(p, \cdot))= (\rho(1), e) \,, e \in \mathcal{C} \} $ and that of $ \{ (f_1(q, \cdot), \cdots, f_k(q, \cdot))= (\rho(1), e)  \,, e \in \mathcal{C}  \}  $. This is critical for our subsequent analysis since it reduces our study to analyzing the aforementioned weighted volumes. 


\begin{Theorem}
\label{theorem2}
Under Assumption (A), {\color{black} when $d \geq 3$,} there is a family of distributions:
 \[
 \{ \Phi_{\mu, e}  \}_{\mu \in M_{X_{F},\text{erg}}(F_{(1,e)} ) , e \in   \mathcal{F}  } \in \mathcal{D}' ( \partial D \times \partial D),
 \] 
which are the Schwartz kernels of $\mathcal{K} _{\mu_e}$ such that they form a partition of the identity operator $Id$ as follows:
\begin{equation}\label{eq:pu1}
{Id =    \int_{\mathcal{F} } \int_{M_{X_{F},\text{erg}}( F_{(1,e)} ) }  \mathcal{K}_{\mu_e} \,  d \nu_e \, (\mu_e ) \,  de (e) \, .}
\end{equation}
It holds in the weak operator topology satisfying that for any given compact convex polytope $\mathcal{C} \subset (0,\infty) \times \mathcal{F} \subset \mathbb{R}^{k}$, there exists
$S(h) \subset J(h) := \{i \in \mathbb{N} : (\lambda^i_1(h), \cdots, \lambda^i_k(h) )\in \mathcal{C}  \}$ such that for all $\varphi \in C^\infty ( \partial D)$ and as $h\rightarrow +0$,
\begin{equation}\label{eq:pu2}
\begin{split}
&\max_{i \in S(h)}  \Bigg|   \int_{ \partial D}  \varphi(x) \bigg( c_i \, | |D|^{-\frac{1}{2}} \phi^i (x) |^2\\
 &-\int_{\mathcal{F} } \int_{M_{X_{F},\text{erg}}( F_{(1,e)} )}   \mu(x,e)     \, g_{i} (\mu_e)  \,  d\nu_e \, (\mu_e  )  \, de (e)  \bigg) d \sigma(x)
 \Bigg| = o_{\mathcal{C}}(1)  \,.
 \end{split}
\end{equation}
In \eqref{eq:pu1},
\begin{equation}\label{eq:pu3}
\begin{split}
&\qquad g_{i} (\mu_e)  :=  c_i 
 \langle  \mathcal{K}_{\mu_e}   |D|^{-\frac{1}{2}} \phi^i   , |D|^{-\frac{1}{2}} \phi^i  \rangle_{L^2(\partial D , d \sigma)}
\,, \\
&   \int_{\mathcal{F} } \int_{M_{X_{F},\text{erg}}( F_{(1,e)} ) }   g_{i} (\mu_e) \,  d \nu_e \, (\mu_e ) \,  de (e) = 1 \, , \, \
\frac{\sum_{i \in S(h) } 1 }{ \sum_{i \in J(h) } 1 } = 1 + o_{\mathcal{C}}(1) \,,
\end{split}
\end{equation}
and moreover,
\begin{equation}\label{eq:pu4}
\begin{split}
 &\qquad\quad \mu (p,e)  \geq 0 \,, \, \int_{\partial \Omega} \mu(p,e) d \sigma(p)  = 1\,, \, \\
  & \frac{\int_{M_{X_{F},\text{erg}}(F_{(1,e)})}  \mu(p,e)  \, d \nu_e(\mu_e)  }{  \int_{M_{X_{F},\text{erg}}(F_{(1,e)})}  \mu(q,e)  \, d \nu_e(\mu_e) }  =  \frac{ \int_{F_{(1,e)}(p) } d \sigma_{p,F_{(1,e)}}  }{  \int_{F_{(1,e)}(q) } d \sigma_{q,F_{(1,e)}}  } \text{ a.e. } (d \sigma \otimes d \sigma) (p,q) \,.
\end{split}
\end{equation}
\end{Theorem}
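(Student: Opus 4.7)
The plan is to combine the quantum ergodicity statement \eqref{ergodicity} with the ergodic decomposition (Lemma~\ref{full_decomposition}) applied leaf-by-leaf on the Lagrangian foliation. The operator $\mathcal{K}_{\mu_e}$ is to be defined so that its bilinear form against $|D|^{-1/2}\phi^i$ coincides with evaluating a test symbol $\varphi(x)|\xi|^{-1}_{g(x)}$ against the ergodic measure $\mu_e$; its Schwartz kernel $\Phi_{\mu,e}$ is the associated distribution on $\partial D\times\partial D$, and \eqref{eq:pu1} is to be read in the weak operator topology as the quadratic-form identity \eqref{eq:pu2} tested on joint eigenfunctions in the density-one subset $S(h)$.

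First I would fix $\varphi\in C^\infty(\partial D)$ and choose the symbol $a_0(x,\xi):=\varphi(x)|\xi|^{-1}_{g(x)}$, so that $\mathrm{Op}_{a_0,h}$ agrees with $\varphi(x)|D|^{-1}$ modulo $h\widetilde{\Phi\mathrm{SO}}_h^{-2}$. Using the identifications \eqref{eq:eg1} together with the normalization $c_i=|\phi^i|^{-2}_{H^{-1/2}(\partial D,d\sigma)}=|\phi^i(h)|^{-2}_{L^2(\partial D,d\sigma)}$, a direct computation gives
\[
c_i\langle \mathrm{Op}_{a_0,h}\phi^i(h),\phi^i(h)\rangle_{L^2}=c_i\int_{\partial D}\varphi(x)\bigl||D|^{-1/2}\phi^i(x)\bigr|^2\,d\sigma(x)+o(1).
\]
Invoking \eqref{ergodicity} then extracts a density-one subsequence $S(h)\subset J(h)$ along which the left-hand side is asymptotically equal to $c_i\langle\mathrm{Op}_{\bar a,h}\phi^i(h),\phi^i(h)\rangle_{L^2}$, where $\bar a$ is the time-average furnished by Lemma~\ref{birkroff}.

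Next I would insert $\bar a(x,\xi)=\int_{F_{(E,e)}}a_0\,d\mu_{(E,e)}$ from Lemma~\ref{birkroff} and disintegrate the Liouville measure on each leaf via Lemma~\ref{full_decomposition}, writing $\sigma_{F_{(1,e)}}$ as a $d\nu_e$-integral of the extremal ergodic measures $\mu_e$. Substituting back and re-collecting terms exhibits the bilinear form as an iterated integral over $(\mu_e,e)$ of expressions of the form $\langle \mathcal{K}_{\mu_e}|D|^{-1/2}\phi^i,|D|^{-1/2}\phi^i\rangle_{L^2}$; this is precisely the weak partition identity \eqref{eq:pu1} tested against $S(h)$, and it defines $g_i(\mu_e)$ via \eqref{eq:pu3}. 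The normalization $\int_{\mathcal{F}}\int g_i(\mu_e)\,d\nu_e\,de=1$ would follow by specializing $\varphi\equiv 1$ and using that each $\mu_e$ is a probability measure.

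The main obstacle will be interpreting the Schwartz kernels $\Phi_{\mu,e}$ rigorously when the ergodic measure $\mu_e$ is singular: it may live on a lower-dimensional invariant subset of the leaf, so $\mathcal{K}_{\mu_e}$ need not extend to a well-behaved operator on $L^2(\partial D)$. I would bypass this by treating \eqref{eq:pu1} only weakly, as the quadratic-form identity \eqref{eq:pu2}, rather than as a genuine operator identity. Finally, the properties in \eqref{eq:pu4} follow by combining the non-negativity and probability normalization of each $\mu_e$ with an application of Theorem~\ref{theorem1} at $\alpha=-1/2$, which yields exactly the ratio of Liouville volumes $\int d\sigma_{p,F_{(1,e)}}/\int d\sigma_{q,F_{(1,e)}}$; disintegrating this ratio across the ergodic decomposition produces the desired pointwise equality almost everywhere in $(p,q)$.
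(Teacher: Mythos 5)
Your overall strategy --- test a multiplication-type operator on the joint eigenfunctions, apply the quantum ergodicity corollary \eqref{ergodicity} to replace it by $\mathrm{Op}_{\bar a,h}$, then unfold $\bar a$ via Lemma~\ref{full_decomposition} and a disintegration so as to define $\mathcal{K}_{\mu_e}$, $g_i(\mu_e)$ and $\mu(p,e)$ --- is the same as the paper's, and your decision to read \eqref{eq:pu1} only in the weak (quadratic-form) sense is also what the paper does. However, there is a concrete error at the very first step: the symbol must be $a_0(x,\xi)=\varphi(x)$, not $\varphi(x)|\xi|_{g(x)}^{-1}$. Since $\phi^i(h)=|D|^{-1/2}\phi^i$ by \eqref{eq:eg1}, the quantity to be captured is $\int_{\partial D}\varphi\,\big||D|^{-1/2}\phi^i\big|^2 d\sigma=\langle\varphi\,\phi^i(h),\phi^i(h)\rangle_{L^2}$, i.e.\ the pairing of the multiplication operator $\mathrm{Op}_{\varphi,h}$ with $\phi^i(h)$. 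With your choice, $\mathrm{Op}_{\varphi|\xi|^{-1},h}\approx h^{-1}\varphi(x)|D|^{-1}$, so $\langle\mathrm{Op}_{a_0,h}\phi^i(h),\phi^i(h)\rangle$ produces (up to the spurious $h^{-1}$) $\langle\varphi|D|^{-3/2}\phi^i,|D|^{-1/2}\phi^i\rangle$, which carries an extra factor of $|D|^{-1}$; your first displayed identity is therefore false. The error is not cosmetic: the extra weight $|\xi|^{-1}$ survives the time-averaging, so $\bar a$ becomes $\int_{F_{(E,e)}}\varphi\,|\xi|^{-1}d\mu_{(E,e)}$ and, after disintegration, the ratio in \eqref{eq:pu4} would come out as $\int_{F_{(1,e)}(p)}|\xi|^{-1}d\sigma_{p,F_{(1,e)}}\big/\int_{F_{(1,e)}(q)}|\xi|^{-1}d\sigma_{q,F_{(1,e)}}$ (the $\alpha=-1$ weighting in the notation of Theorem~\ref{theorem1}) rather than the plain fiber volumes ($\alpha=-1/2$) asserted in the theorem.

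A secondary issue: \eqref{eq:pu4} cannot be obtained ``from Theorem~\ref{theorem1} at $\alpha=-1/2$'' --- that theorem is a statement about averages of eigenfunctions, whereas \eqref{eq:pu4} is a purely measure-theoretic identity about $\mu(p,e)$ and $\nu_e$. The correct route (and the paper's) is to apply the disintegration theorem to the probability measure $d\mu_e(x,\xi)\,d\nu_e(\mu_e)$, obtaining $d\mu_{p,e}\,(d\nu_e\otimes d\sigma)(\mu_e,p)$ with $\mu_{p,e}$ concentrated on $F_{(1,e)}(p)\cap\mathrm{spt}(\mu_e)$; setting $\mu(p,e):=\mu_{p,e}(F_{(1,e)})$ then yields the nonnegativity, the normalization $\int_{\partial D}\mu(p,e)\,d\sigma(p)=1$, and the volume ratio essentially by construction. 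With the symbol corrected to $\varphi(x)$ and \eqref{eq:pu4} derived through the disintegration rather than through Theorem~\ref{theorem1}, the remainder of your outline goes through and coincides with the paper's argument.
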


\begin{proof}
Let $f, \varphi \in \mathcal{C}^{\infty}(\partial D)$ be given.  Let us consider $a(x,\xi) := \varphi(x) $. Then we have 
\[
 \int_{ F_{(E, e)} } \varphi \, d \mu_{E,e}  =  \int_{F_{(1, e)}  } \varphi d \mu_{e} .
 \]  
Take a partition of unity $\{\chi_i\}$ on $\{U_i\}$. With an abuse of notation via identification of points with the local trivialisation $\{F_i\}$, by Lemmas \ref{full_decomposition} and \ref{birkroff}, we have
\begin{equation}\label{eq:pp1}
\begin{split}
& [ \text{Op}_{\bar{\varphi} , h} f ](y) \\
= &  \int_{\mathcal{F} } \int_{(0,\infty) \times M_{X_{{F}},\text{erg}} (F_{(1, e)}) } \sum_l   \left(  \int_{F_{(E,e)}}  \exp( \langle x-y, \xi \rangle / h )  \bar{a}(x,\xi)   \chi_l (x) f (x)  d \mu_{(E,e)}(x)  \right)   \\
&\hspace*{4cm}\times  \sigma_{{F_{(1,e)}}} (F_{(1, e)}) \, h(e)  E^{\frac{1 + k -  d}{2} } \, ( d E \otimes
 \,  d \nu_e)  ( E, \mu_e )  
 \, de (e) \,. \\
= &  \int_{\mathcal{F} } \int_{(0,\infty) \times M_{X_{{F}},\text{erg}} (F_{(1, e)}) } \sum_l   \left(  \int_{F_{(1, e)}  } \varphi d \mu_{e}  \right)    \left(  \int_{F_{(1,e)}}  \exp( \langle  x-y, E^{ - \frac{1}{2}} \xi \rangle / h )  \chi_l (x) f (x)  d \mu_{e}(x)  \right)   \\
&\hspace*{4cm}\times  \sigma_{{F_{(1,e)}}} (F_{(1, e)}) \, h(e)  E^{\frac{1 + k -  d}{2} } \, ( d E \otimes
 \,  d \nu_e)  ( E, \mu_e )  
 \, de (e) \,. 
\end{split}
\end{equation}
On the other hand, considering $Id = \mathrm{Op}_{1,h} = \text{Op}_{ \bar{1}, h}$ (which is independent of $h$), one can show that
\begin{equation}
    \begin{split}
&  [ \text{Op}_{ 1, h} f ](y)  \\
= &  \int_{\mathcal{F} } \int_{(0,\infty) \times M_{X_{{F}},\text{erg}} (F_{(1, e)}) } \sum_l   \left(  \int_{F_{(E,e)}}  \exp( \langle x-y, \xi \rangle / h )   \chi_l (x) f (x)  d \mu_{(E,e)}(x)  \right)   \\
&\hspace*{4cm}\times  \sigma_{{F_{(1,e)}}} (F_{(1, e)}) \, h(e)  E^{\frac{1 + k -  d}{2} } \, ( d E \otimes
 \,  d \nu_e)  ( E, \mu_e )  
 \, de (e) \,. \\
= &  \int_{\mathcal{F} } \int_{(0,\infty) \times M_{X_{{F}},\text{erg}} (F_{(1, e)}) } \sum_l   \left(  \int_{F_{(1,e)}}   \exp( \langle  x-y, E^{- \frac{1}{2} } \xi \rangle / h )  \chi_l (x) f (x)  d \mu_{e}(x)  \right)  \, \\
&\hspace*{4cm}\times   \sigma_{{F_{(1,e)}}} (F_{(1, e)}) \, h(e)  E^{\frac{ 1 + k- d}{2} } \, ( d E \otimes
 \,  d \nu_e)  ( E, \mu_e )  
 \, de (e) \,.
\end{split}
\end{equation}
If defining $ \mathcal{K}_{\mu}$ (which is again independent of $h$) to be such that
\beqnx
&&  [ \mathcal{K}_{\mu_{e}} f ](y)  := \int_{(0,\infty) } \sum_l   \left(  \int_{F_{(1,e)}}   \exp( \langle x-y,  E^{- \frac{1}{2} }  \xi \rangle / h )  \chi_l (  x) f (x)  d \mu_{e}(x)  \right)\\
  &&\hspace*{2cm}\times \sigma_{{F_{(1,e)}}} (F_{(1, e)}) \, h(e)  E^{\frac{ 1 + k- d}{2} } \,  d E  ( E  )  \,,
\eqnx
we have by definition in the weak operator topology that
\[
Id =  \int_{\mathcal{F} } \int_{M_{X_{F},\text{erg}}( F_{(1,e)} ) }  \mathcal{K}_{\mu_e} \,  d \nu_e \, (\mu_e )  \, de (e) \, 
\]
That is, 
\[
 \langle f , f  \rangle_{L^2(\partial D , d \sigma)} = 
\int_{\mathcal{F} } \int_{M_{X_{F},\text{erg}}( F_{(1,e)} ) }  \langle  \mathcal{K}_{\mu_e}   f , f  \rangle_{L^2(\partial D , d \sigma)} \,  d \nu_e \, (\mu_e )  \, de (e) \,,
\]
whereas
\beqnx
 \langle \text{Op}_{  \overline{ \varphi }  , h} f , f  \rangle_{L^2(\partial D , d \sigma)}  =  
\int_{\mathcal{F} } \int_{M_{X_{F},\text{erg}}( F_{(1,e)} ) }  \left(  \int_{F_{(1, e)}  } \varphi d \mu_{e}  \right)   \, \langle  \mathcal{K}_{\mu_e}   f , f  \rangle_{L^2(\partial D , d \sigma)} \,  d \nu_e \, (\mu_e )  \, de \,.
\eqnx
Recall that $d \sigma_{F_{(1,e)}}(x,\xi) / \sigma_F ( F_{(1, e)} ) =  d \mu_e (x,\xi) \,  d \nu_e (\mu_e) $ is a probability measure. We now apply the disintegration theorem to the measure $d \mu_e (x,\xi) \,  d \nu_e (\mu_e) $ and obtain a disintegration $d \mu_{p,e} (x,\xi) \, d\nu_e(\mu_e) \otimes d \sigma(p) $, where the measure-valued map $ ( \mu_e , p ) \mapsto \mu_{p,e}$ is a $d\nu_e \otimes d \sigma $ measurable function together with $\mu_{p,e} \left( F_{(1, e) }  \backslash ( F_{(1, e) }(p) \bigcap \text{spt}(\mu_e) ) \right) = 0 $ a.e. $d\nu_e \otimes d \sigma$.  Therefore, we obtain
\beqnx
&& \langle \text{Op}_{  \overline{ \varphi }  , h} f , f  \rangle_{L^2(\partial D , d \sigma)}\\
  &= & 
\int_{\mathcal{F} } \int_{M_{X_{F},\text{erg}}( F_{(1,e)} )  \times \partial D}  \int_{F_{(1, e)}  } \varphi   \, \langle  \mathcal{K}_{\mu_e}   f , f  \rangle_{L^2(\partial D , d \sigma)} \,  
d \mu_{p,e} \,  (d\nu_e \otimes d \sigma) \, (\mu_e ,p )  \, de (e) \,.
\eqnx
It is also observed that
\[
\int_{F_{(1,e)} }  \varphi \, d \mu_{p,e}  =  \int_{ F_{(1,e)}(p) }  \varphi \, d \mu_{p,e} = \varphi(p) \, \mu_{p,e}(  F_{(1,e)}  ) .
\]
If we denote
\[
\mu(p,e) := \mu_{p,e}(  F_{(1,e)}  )  \geq 0 \,,
\]
then a.e. $d \nu_e (\mu_e)$, the function $\mu( \cdot , e) \in L^1(\partial \Omega, d \sigma)$. As a result of the disintegration, we have a.e. $d \nu_e (\mu_e)$,
\[
\int_{\partial \Omega} \mu(p,e) d \sigma(p) = \mu_e ( F_{(1,e)}) = 1 \,.
\]
Furthermore, we have
\beqnx
&& \langle \text{Op}_{  \overline{ \varphi }  , h} f , f  \rangle_{L^2(\partial D , d \sigma)}\\
 & =&  
\int_{\mathcal{F} } \int_{M_{X_{F},\text{erg}}( F_{(1,e)} )  \times \partial D}   \varphi(x) \, \mu(x,e)     \, \langle  \mathcal{K}_{\mu_e}   f , f  \rangle_{L^2(\partial D , d \sigma)} \,  (d\nu_e \otimes d \sigma) \, (\mu_e ,x )  \, de (e)  \,.
\eqnx

Finally, we choose $f = \phi^i (h) = |D|^{-\frac{1}{2}} \phi^i$ and apply \eqref{ergodicity} to obtain the conclusion of our theorem. It is noted that the choice of $S(h)$ is independent of $\varphi \in \mathcal{C}^{\infty} (\partial D)$.
The ratio in the last line of the theorem comes from the fact that a.e. $ d \sigma(p)$ we have by definition
\begin{equation}
\begin{split}
    \int_{M_{X_{F},\text{erg}}(F_{(1,e)})}  \mu(p,e)  \, d \nu_e(\mu_e) :=&    \int_{M_{X_{F},\text{erg}}(F_{(1,e)})}  \mu_{p,e} ( F_{(1,e)}(p))  \, d \nu_e(\mu_e)  \\ =&  \frac{ \int_{F_{(1,e)}(p) } d \sigma_{p,F_{(1,e)}}  }{  \int_{F_{(1,e)} } d \sigma_{ F_{(1,e)}}   } \,.
\end{split}
\end{equation}
 
The proof is complete. 
\end{proof}

Theorem~\ref{theorem2} indicates that most of the function $ c_i ||D|^{-\frac{1}{2}} \phi^i (x) |^2$ weakly converges to a $g_{i} (\mu_e) \, d \nu_e(\mu_e)$-weighted average of  $ \mu(x,e) $ on each leaf $F_{(1,e)}$,
where the ratio between a $d \nu_e(\mu_2)$-weighted average of $ \mu(p,e) $ and that of $ \mu(q,e) $ 
depends solely on the ratio between the volume of $ F_{(1,e)}(p)$ and that of $ F_{(1,e)}(q)$.

For the sake of completeness, we also give the following corollary, which generalizes a similar result in \cite{ACL2} and can be viewed as a generalization of the quantum ergodicity over the leaves of the foliation generated by the integrable system. 

\begin{Corollary}
\label{corollary3}
Under Assumption (A), {\color{black} when $d \geq 3$,} if the joint Hamiltonian flow given by $X_{f_j}$'s is ergodic on $ F_{(1, e)} $ with respect to the Liouville measure for each $e \in \mathcal{F}$, 
then there is a family of distributions $\{ \Phi_{e}  \}_{e \in   \mathcal{F}  } \in \mathcal{D}' ( \partial D \times \partial D)$ as the Schwartz kernels of $\mathcal{K} _{e}$ such that they form a partition of the identity operator $Id$ as follows:
\begin{equation}\label{eq:pu1}
Id =    \int_{\mathcal{F} }  \mathcal{K}_{e} \,  de (e) \, ,
\end{equation}
which holds in the weak operator topology satisfying that for any given compact convex polytope $\mathcal{C} \subset (0,\infty) \subset \mathbb{R}^{k}$, there exists
$S(h) \subset J(h) := \{i \in \mathbb{N} : (\lambda^i_1(h), \cdots, \lambda^i_k(h) )\in \mathcal{C}  \}$ such that for all $\varphi \in C^\infty ( \partial D)$ and as $h\rightarrow +0$,
{\small
\begin{equation}\label{eq:pu2_a}
\max_{i \in S(h)}  \left|   \int_{ \partial D}  \varphi(x) \left( c_i \, | |D|^{-\frac{1}{2}} \phi^i (x) |^2 -
\int_{\mathcal{F} }   \frac{ \sigma_{x,F_{(1,e)}} \left( F_{(1,e)}(x)  \right)  }{  \sigma_{ F_{(1,e)}}  \left( F_{(1,e)} \right)   }   \, g_{i} (e)  \, de (e)  \right) d \sigma(x)
 \right| = o_{\mathcal{C}}(1)  \,.
\end{equation}
}In \eqref{eq:pu1},
\begin{equation}\label{eq:pu3_a}
 g_{i} (e)  :=  c_i 
 \langle  \mathcal{K}_{e}   |D|^{-\frac{1}{2}} \phi^i   , |D|^{-\frac{1}{2}} \phi^i  \rangle_{L^2(\partial D , d \sigma)}
\,,    \int_{\mathcal{F} }   g_{i} (e) \,   de (e) = 1 \, , \, 
\frac{\sum_{i \in S(h) } 1 }{ \sum_{i \in J(h) } 1 } = 1 + o_{\mathcal{C}}(1) \,.
\end{equation}
\end{Corollary}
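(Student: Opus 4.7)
The plan is to derive Corollary~\ref{corollary3} as a direct specialization of Theorem~\ref{theorem2} under the ergodicity hypothesis. The key observation is that when the joint Hamiltonian flow generated by $X_{f_1},\dots,X_{f_k}$ is ergodic on each leaf $F_{(1,e)}$ with respect to the Liouville measure $d\sigma_{F_{(1,e)}}/\sigma_{F_{(1,e)}}(F_{(1,e)})$, the set $M_{X_F,\text{erg}}(F_{(1,e)})$ of ergodic invariant probability measures is a singleton, consisting of the normalized Liouville measure itself. Consequently, $\nu_e$ collapses to a Dirac mass at this unique ergodic measure and the inner integral over $M_{X_F,\text{erg}}(F_{(1,e)})$ becomes trivial.

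First, I would invoke Theorem~\ref{theorem2} and simply substitute this singleton structure into \eqref{eq:pu1}, \eqref{eq:pu2}, and \eqref{eq:pu3}. Define $\mathcal{K}_e := \mathcal{K}_{\mu_e}$ where $\mu_e = d\sigma_{F_{(1,e)}}/\sigma_{F_{(1,e)}}(F_{(1,e)})$, and let $\Phi_e$ denote its Schwartz kernel. Then \eqref{eq:pu1} of Theorem~\ref{theorem2} immediately reduces to $Id = \int_{\mathcal{F}} \mathcal{K}_e \, de(e)$ in the weak operator topology, and the normalization condition together with $\frac{\sum_{i\in S(h)} 1}{\sum_{i\in J(h)} 1} = 1 + o_{\mathcal{C}}(1)$ carries over verbatim with the same choice of $S(h)$.

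Second, I would identify the function $\mu(x,e)$ of Theorem~\ref{theorem2} in this ergodic setting with the explicit volume ratio $\sigma_{x,F_{(1,e)}}(F_{(1,e)}(x))/\sigma_{F_{(1,e)}}(F_{(1,e)})$. Indeed, the disintegration theorem applied to the normalized Liouville measure on $F_{(1,e)}$ with respect to the projection to $\partial D$ yields conditional measures supported on the fibers $F_{(1,e)}(p) \subset T_p^*(\partial D)$, with weight $\mu_{p,e}(F_{(1,e)}) = \sigma_{p,F_{(1,e)}}(F_{(1,e)}(p))/\sigma_{F_{(1,e)}}(F_{(1,e)})$ almost everywhere in $d\sigma(p)$; this is precisely the content of the last display in the proof of Theorem~\ref{theorem2}. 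Substituting this into \eqref{eq:pu2} then yields \eqref{eq:pu2_a}, and \eqref{eq:pu3_a} follows from \eqref{eq:pu3} by the same substitution.

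The main obstacle, if any, is bookkeeping: one must verify that the uniform $o_{\mathcal{C}}(1)$ control and the independence of $S(h)$ from $\varphi \in \mathcal{C}^\infty(\partial D)$ are preserved under the collapse of the inner integral, and that the measurable choice of $\mu_{p,e}$ from the disintegration is consistent with the Liouville measure on each leaf (so that $\mu(\cdot,e) \in L^1(\partial D, d\sigma)$ is well-defined and normalized). Both points follow from the fact that under ergodicity the $\nu_e$-integral trivializes and the entire construction depends only on the smooth family $e \mapsto d\sigma_{F_{(1,e)}}$, which varies smoothly by Assumption (A) and the smoothness of the moment map $F$; hence no further analytical work beyond that already done in Theorem~\ref{theorem2} is required.
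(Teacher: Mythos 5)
Your proposal is correct and follows essentially the same route as the paper: the paper's proof simply observes that ergodicity of the joint flow with respect to the Liouville measure makes $\sigma_{F_{(1,e)}}$ itself an element of $M_{X_F,\text{erg}}(F_{(1,e)})$, so one may take $\nu_e = \delta_{\sigma_{F_{(1,e)}}}$ in Theorem~\ref{theorem2} and collapse the inner integral exactly as you describe. One small caveat: your assertion that $M_{X_F,\text{erg}}(F_{(1,e)})$ is a singleton is stronger than needed and not true in general (ergodicity of the Liouville measure does not imply unique ergodicity); the argument only requires that the ergodic decomposition of the Liouville measure is the Dirac mass at itself, which is what you in fact use.
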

\begin{proof}

The conclusion follows by noting that if the joint flow of $X_{f_j}$'s is ergodic with respect to $\sigma_{F_{(1,e)}}$ for each $e \in \mathcal{F}$, then $\sigma_{F_{(1,e)}} \in M_{X_{F},\text{erg}} ( F_{(1,e)})$ and we can take $ \nu = \delta_{\sigma_{F_{(1,e)}} } $ which is the Dirac measure of $\sigma_{F_{(1,e)}} \in M_{X_{F},\text{erg}} ( F_{(1,e)})$.
\end{proof}

By Corollary~\ref{corollary3}, we see that if the joint flow of $X_{f_j}$ is ergodic on $ F_{(1, e)} $ with respect to the Liouville measure for all $e \in \mathcal{F}$, then most of the function $ c_i ||D|^{-\frac{1}{2}} \phi^i (x) |^2$ weakly converges to a $\frac{g_{i} (e)}{\sigma_{F_{(1,e)}} (F_{(1,e)}) } $-weighted average of the volumns of $F_{(1,e)}(x)$ over $e \in \mathcal{F}$.  Therefore the value of eigenfunction at $x \in \partial D$ goes high as the volumn of $F_{(1,e)}(x)$ goes up for each leaf indexed by $e \in \mathcal{F}$.

\subsection{Localization/concentration of plasmon resonance at high-curvature points}

From Theorems~\ref{theorem1} and \ref{theorem2} in the previous subsection, it is clear that the relative magnitude of the NP eigenfunction $\phi^i $ at a point $x$  depends on the (weighted) volume of each leaf $F_{(1,e)}(x)$.   Therefore, in order to understand the localization of plasmon resonance, it is essential to obtain a better description of this volume.  Again, as we notice similar to \cite{ACL2}, this volume heavily depends on the magnitude of the second fundamental form $\mathscr{A}(x)$ at the point $x$.   As we will see in this subsection, in general, the higher the magnitude of the second fundamental form $\mathscr{A}(x)$ is, the larger the volume of the characteristic variety becomes. In particular, in a relatively simple case when the second fundamental forms at two points are constant multiple of each other, we have the following volume comparison.
\begin{Lemma}
\label{theorem3}
Let $p,q \in \partial D$ be such that $\mathscr{A}(p) = \beta \mathscr{A}(q) $ for some $\beta > 0$ and $g(p) = g(q)$. Then $|F_{(1,e)} (p)  | = \beta^{d-2} | |F_{(1,e)} (q)  | $.  We also have $$ \int_{ F_{(1,e)} (p)  }  | \xi |_{g(p)}^{1+ 2 \alpha }  d \sigma_{p,F_{(1,e)}} = \beta^{d- 1+ 2 \alpha } \int_{ F_{(1,e)} (q)  }  | \xi |_{g(q)}^{1+ 2 \alpha }  d \sigma_{q,F_{(1,e)}} . $$
\end{Lemma}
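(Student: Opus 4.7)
The plan is to realise the lemma as a change-of-variables identity under the linear dilation $\Phi_\beta\colon T^*_q(\partial D) \to T^*_p(\partial D)$ defined by $\eta \mapsto \beta\eta$. First I would fix geodesic normal coordinates at $p$ and at $q$, in which both metrics become the identity at the origin; the hypothesis $g(p) = g(q)$ then means that under the induced identification $T^*_p(\partial D) \cong T^*_q(\partial D) \cong \mathbb{R}^{d-1}$ the two cotangent fibres carry the same Euclidean structure.

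The key algebraic step is to use the principal symbol formula \eqref{eq:s1}. Since $\mathscr{A}(p) = \beta\mathscr{A}(q)$ together with $g(p) = g(q)$ forces $\mathscr{H}(p) = \frac{1}{d-1}\mathrm{tr}_{g(p)}\mathscr{A}(p) = \beta\mathscr{H}(q)$, substitution gives
\[
p_{\mathcal{K}^*_{\partial D},-1}(p,\xi) = (d-1)\mathscr{H}(p)|\xi|^{-1} - \langle\mathscr{A}(p)\xi,\xi\rangle|\xi|^{-3} = \beta\, p_{\mathcal{K}^*_{\partial D},-1}(q,\xi),
\]
so $H(p,\xi) = \beta^2 H(q,\xi)$. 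Combining this with the $\xi$-homogeneity $H(x,\lambda\xi) = \lambda^{-2}H(x,\xi)$ produces $\{H(p,\cdot)=1\} = \beta\{H(q,\cdot)=1\}$. In the setting relevant here (in particular, the auxiliary $f_j$ for $j \geq 2$ cut no further dimension in the fibre, consistently with the $(d-2)$-dimensional leaves reflected by the $\beta^{d-2}$ exponent in the statement), this upgrades to $\Phi_\beta$ being a diffeomorphism $F_{(1,e)}(q) \to F_{(1,e)}(p)$.

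The two claims then follow from the Jacobian of the dilation. The uniform scaling by $\beta$ of $\mathbb{R}^{d-1}$ transforms the $(d-2)$-dimensional Hausdorff measure on a hypersurface by the factor $\beta^{d-2}$, giving the first identity $|F_{(1,e)}(p)| = \beta^{d-2}|F_{(1,e)}(q)|$. For the weighted integral, the pointwise identity $|\xi|_{g(p)} = |\beta\eta|_{g(q)} = \beta|\eta|_{g(q)}$ yields the extra factor $\beta^{1+2\alpha}$, which combines with the surface-measure factor $\beta^{d-2}$ to produce the claimed $\beta^{d-1+2\alpha}$. The main technical point I anticipate is making the identification of the fibre Liouville measure $d\sigma_{\cdot,F_{(1,e)}}$ with the induced Euclidean surface measure fully rigorous: one must apply the coarea formula to the defining functions $H, f_2, \ldots, f_k$ and check that any auxiliary constraints contribute no further Jacobian factor under $\Phi_\beta$, which in turn requires the $f_j$ to be consistent with the homogeneity and identification that $\Phi_\beta$ exploits.
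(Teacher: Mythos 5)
Your proposal is correct and follows essentially the same route as the paper: the paper's own (two-line) proof also derives $H(p,\xi)=H(q,\xi/\beta)$ from the principal symbol formula together with the $-2$ homogeneity of $H$, concludes $F_{(1,e)}(p)=\beta F_{(1,e)}(q)$, and reads off the scaling factors $\beta^{d-2}$ and $\beta^{d-1+2\alpha}$ from the dilation. The measure-theoretic point you flag at the end (compatibility of the auxiliary $f_j$ and of the Liouville measure with the dilation) is simply taken for granted in the paper's proof, so your version is, if anything, the more careful one.
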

\begin{proof}
From $-2$ homogeneity of $H$, we have $H(p,\xi  ) = H(q,\xi / \beta )$, and therefore 
$\{F(x, \xi) = (\rho(1), e_2, \cdots, e_k)\}  = \beta \{ F(q,\xi)= (\rho(1), e_2, \cdots, e_k) \}$, which readily yields the conclusion of the theorem. 
\end{proof}

Similar to \cite{ACL2}, localization can be better understood via a more delicate volume comparison of the characteristic variety at different points with the help of Theorems \ref{theorem1} and \ref{theorem2} and Corollary \ref{corollary3}.  However, it is difficult to give a more explicit comparison of the volumes between $ F_{(1,e)} (p)  $ and $F_{(1,e)} (q) $ by their respective second fundamental forms $\mathscr{A}(p)$ and $\mathscr{A}(q)$. The following lemma provides a detour to control how the (weighted) volume of $F_{(1,e)} (p)  $ depends on the principal curvatures $\{ \kappa_i(p) \}_{i=1}^{d-1}$.


\begin{Lemma}
\label{theorem_volume}
Let $G_{\alpha}^{e} : \partial D \times  \mathbb{R}^{d-1} \rightarrow \mathbb{R}$ be given as
\begin{equation}\label{eq:d1}
G^e_{\alpha} \left( p,  \{ \kappa_i \}_{i=1}^{d-1} \right)  := \int_{\bigcap_{l=2}^k \left\{ f_l \left (p, \omega \sum_{i=1}^{d-1} \widetilde{\kappa_i  } \, \omega_i^2 \right) = e_l \right\} }   \left| \sum_{i=1}^{d-1} \widetilde{\kappa_i } \omega_i^2 \right|^{d-1 + 2 \alpha} \sqrt{ \sum_{i=1}^{d-1} \widetilde{\kappa_i }^2 \omega_i^2 }\  d \omega_{p,e}  ,
\end{equation}
where 
\begin{equation}\label{eq:d2}
 \widetilde{\kappa_i } := \sum_{j=1}^{d-1} \kappa_j  - \kappa_i . 
\end{equation}
Then the following inequality holds:
\begin{equation}\label{eq:d3}
G^e_{\alpha} \left( p,  \{ \kappa_i(p) \}_{i=1}^{d-1} \right)  \leq \int_{ F_{(1,e)} (p)  } | \xi |_{g(p)}^{1+2\alpha} \, d \sigma_{p,F_{(1,e)}} \leq 2 G^e_{\alpha} \left( p, \{ \kappa_i(p) \}_{i=1}^{d-1} \right).
\end{equation}
\end{Lemma}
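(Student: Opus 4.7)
The plan is to carry out a direct computation in geodesic normal coordinates at $p$, reducing the bound to a sharp two-sided Cauchy--Schwarz estimate of a single explicit expression.

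First, in geodesic normal coordinates at $p$ one has $g(p) = \mathrm{Id}$ and, after an orthogonal change of frame, $\mathscr{A}(p) = \mathrm{diag}(\kappa_1(p),\ldots,\kappa_{d-1}(p))$. The principal symbol in \eqref{eq:s1} collapses to $p_{\mathcal{K}^*_{\partial D},-1}(p,\xi) = |\xi|^{-3}\sum_i \widetilde{\kappa_i}\xi_i^2$, so the Hamiltonian from \eqref{eq:hf1} reads $H(p,\xi) = |\xi|^{-6}\bigl(\sum_i \widetilde{\kappa_i}\xi_i^2\bigr)^2$. Writing $\xi = r\omega$ with $\omega \in S^{d-2}$, the equation $H(p,\xi)=1$ becomes $r = |A(\omega)|$ with $A(\omega) := \sum_i \widetilde{\kappa_i}\omega_i^2$. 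Thus $\Phi:\omega \mapsto \omega\, A(\omega)$ parametrises the hypersurface $\{H(p,\cdot) = 1\} \subset T^*_p \partial D$ appearing in the integration domain of $G^e_\alpha$ in \eqref{eq:d1}.

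Second, the plan is to express $d\sigma_{p,F_{(1,e)}}$ in this $\omega$-parametrisation via the coarea formula applied on the fibre $T^*_p\partial D$ to $F = (\tilde H, f_2,\ldots,f_k)$. A short computation yields, for $v \in T_\omega S^{d-2}$, the identity $D\Phi(v) = A\,v + (\nabla_S A \cdot v)\omega$, so the pullback area element on $\{H=1\}$ is $|A|^{d-3}\sqrt{A^2 + |\nabla_S A|^2}\,d\omega$; an analogous computation gives $|\nabla_\xi H| = 2|A|^{-2}\sqrt{A^2+|\nabla_S A|^2}$ on $\{H=1\}$, where $\nabla_S$ denotes the spherical gradient on $S^{d-2}$. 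After the Jacobian contributions from the remaining constraints $f_l = e_l$ ($l \geq 2$) are absorbed into a residual measure $d\omega_{p,e}$ on $\bigcap_{l\geq 2}\{f_l\circ\Phi = e_l\} \subset S^{d-2}$, this rewrites $\int_{F_{(1,e)}(p)} |\xi|^{1+2\alpha}\, d\sigma_{p,F_{(1,e)}}$ as an explicit integral on that spherical submanifold with an integrand proportional to $|A|^{d-1+2\alpha}\,\sqrt{A^2+|\nabla_S A|^2}$.

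Third, a direct calculation using $|\nabla A|^2 = 4\sum_i \widetilde{\kappa_i}^2\omega_i^2 =: 4S$ and $\omega\cdot\nabla A = 2A$ yields the clean identity $A^2 + |\nabla_S A|^2 = 4S - 3A^2$. The Cauchy--Schwarz inequality
\[
A^2 \;=\; \Bigl(\sum_i \omega_i \cdot \widetilde{\kappa_i}\omega_i\Bigr)^{\!2} \;\leq\; \Bigl(\sum_i \omega_i^2\Bigr)\Bigl(\sum_i \widetilde{\kappa_i}^2 \omega_i^2\Bigr) \;=\; S
\]
then produces the sharp pointwise two-sided bound
\[
\sqrt{S} \;\leq\; \sqrt{A^2 + |\nabla_S A|^2} \;\leq\; 2\sqrt{S},
\]
from which the desired inequality $G^e_\alpha(p,\{\kappa_i(p)\}) \leq \int_{F_{(1,e)}(p)} |\xi|^{1+2\alpha}_{g(p)} d\sigma_{p,F_{(1,e)}} \leq 2\,G^e_\alpha(p,\{\kappa_i(p)\})$ follows by substitution into the integrand identified in the previous paragraph. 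The main technical obstacle will lie in the second step: carefully bookkeeping the Jacobian $\sqrt{\det(DF\,DF^T)}$ on the fibre in order to identify precisely which factors are absorbed into $d\omega_{p,e}$ and to confirm that the remaining prefactor matches $|A|^{d-1+2\alpha}\sqrt{A^2+|\nabla_S A|^2}$ exactly as required by \eqref{eq:d1}; once this identification is in place, the whole estimate reduces to the single sharp Cauchy--Schwarz inequality displayed above, which is the only place where a non-trivial analytic bound is used.
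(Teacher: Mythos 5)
Your proposal follows essentially the same route as the paper: fix geodesic normal coordinates at $p$ diagonalizing $\mathscr{A}(p)$, write $H(p,\xi)=\bigl(\sum_i\widetilde{\kappa_i}\xi_i^2\bigr)^2/|\xi|^6$, parametrize $\{H(p,\cdot)=1\}$ radially over $\mathbb{S}^{d-2}$ via $\xi(\omega)=r(\omega)\omega$ with $r(\omega)=\sum_i\widetilde{\kappa_i}\omega_i^2$, and then prove the estimate as a pointwise two-sided comparison of measure densities on the spherical variety. Where you add value is at the final step: the paper simply asserts the density inequality "by following a similar argument to that of Lemma 4.5 in [ACL2]", whereas you exhibit the mechanism explicitly — the identity $A^2+|\nabla_S A|^2=4S-3A^2$ (with $A=\sum_i\widetilde{\kappa_i}\omega_i^2$, $S=\sum_i\widetilde{\kappa_i}^2\omega_i^2$) combined with the Cauchy--Schwarz bound $A^2\le S$, which is indeed the only natural source of the exact constants $1$ and $2$. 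Your computations of $D\Phi$, of the pullback area element $|A|^{d-3}\sqrt{A^2+|\nabla_S A|^2}\,d\omega$, and of $|\nabla_\xi H|=2|A|^{-2}\sqrt{A^2+|\nabla_S A|^2}$ on $\{H=1\}$ all check out. The one genuine point of caution is the exponent of $|A|$ in your second step: with the plain Hausdorff measure on $\{H=1\}$ the prefactor comes out as $|A|^{d-2+2\alpha}\sqrt{A^2+|\nabla_S A|^2}$, while with the coarea normalization (dividing by $|\nabla_\xi H|$) the factor $\sqrt{A^2+|\nabla_S A|^2}$ cancels entirely; so obtaining exactly $|A|^{d-1+2\alpha}\sqrt{A^2+|\nabla_S A|^2}\,d\omega_{p,e}$ requires tracking how $d\sigma_{p,F_{(1,e)}}$ arises from the disintegration of the Liouville measure and how $d\omega_{p,e}$ is normalized, neither of which the paper pins down precisely. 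You flag this honestly as the remaining obstacle, and it is the same gap the paper itself leaves open by deferring to [ACL2].
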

\begin{proof}
We first fix a point $p$ and choose a geodesic normal coordinate with the principal curvatures along the directions $\xi_i$. In doing so, we can simplify the expression of $H(p,\xi) =1$. In fact, we then have
$$H(p,\xi) = \left(   \sum_{i=1}^{d-1} \widetilde{\kappa_i (p) } \, \xi_i^2 \right)^2 \bigg/  \left(   \sum_{i=1}^{d-1} \xi_i^2 \right)^3 \,. $$
Due to the $-2$ homogeneity of $H(p,\xi)$ with respect to $\xi$, we parametrize the surface $ \{ H(p,\xi) =1 \} $ by $\omega \in \mathbb{S}^{d-2}$ with $\xi (\omega) := r(\omega) \, \omega$.  Then we have
$$
r(\omega) =  \sum_{i=1}^{d-1} \widetilde{\kappa_i (p) } \, \omega_i^2  \,,
$$
Now with the above parametrization $\xi (\omega) = r(\omega) \, \omega$, we substitute to have
\beqnx
F_{(1,e)} (p)    =  \bigcap_{l=2}^k \left\{ \left(p, r(\omega) \omega \right)  \, :\,  \omega \in \mathbb{S}^{d-2},  f_l \left (p, \omega \sum_{i=1}^{d-1} \widetilde{\kappa_i (p) } \, \omega_i^2 \right) = e_l \right\} \,.
\eqnx
Writing the $(d-1-k)$-Hausdorff measure of the variety $ \bigcap_{l=2}^k \{ f_l \left (p, \omega \sum_{i=1}^{d-1} \widetilde{\kappa_i (p) } \, \omega_i^2 \right) = e_l  \} $ on $\mathbb{S}^{d-2}$ as:
\[
d \omega_{p,e} := \delta_{  \bigcap_{l=2}^k \left\{ \omega \in \mathbb{S}^{d-2} \, :\,  f_l \left (p, \omega \sum_{i=1}^{d-1} \widetilde{\kappa_i (p) } \, \omega_i^2 \right) = e_l \right\} } (d \omega) 
\]
and by following a similar argument to that of Lemma 4.5 in \cite{ACL2}, we can show that
\beqnx
 &&\left| \sum_{i=1}^{d-1} \widetilde{\kappa_i(p) } \omega_i^2 \right|^{d-1 + 2 \alpha} \sqrt{ \sum_{i=1}^{d-1} \widetilde{\kappa_i (p)}^2 \omega_i^2 } \, d \omega_{p,e}\\
 && \leq  | \xi |_{g(p)}^{1+2\alpha}  d \sigma_{p,F_{(1,e)} } \leq  2 \left| \sum_{i=1}^{d-1} \widetilde{\kappa_i(p) } \omega_i^2 \right|^{d-1 + 2 \alpha} \sqrt{ \sum_{i=1}^{d-1} \widetilde{\kappa_i (p)}^2 \omega_i^2 } \, \, d \omega_{p,e}  \,.
\eqnx

The proof is complete. 
\end{proof}

By Lemma~\ref{theorem_volume}, we can readily see that in order to compare the ration of the magnitudes of the NP eigenfunctions, one can actually compare the ration of the magnitudes of the principal curvatures at the respective point. For instance, if it happens that $\min_i |\widetilde{\kappa_i (p)}| \gg \max_i |\widetilde{\kappa_i (q)}| $, then it is clear that the weighted volume of  $F_{(1,e)} (p)$  is much bigger than that at $q$.

In the next subsection, we discuss a motivating example which shows how the above lemma can be simplified and provide precise and concrete description.

\subsection{A motivating example: surface with rotational symmetry} \label{example_revolution}  In what follow, we discuss a motivating example, which illustrates how the knowledge of another communting Hamiltonian simplifies the understanding of the Hamiltonian flow of our concern and provide explicit description of eigenfunction concentration.

\begin{Example}
   Consider that $D \subset \mathbb{R}^3$ is convex and take $ G:= \left \{ \begin{pmatrix} U & 0 \\ 0 & 1 \end{pmatrix} : U \in SO(2) \right \}  \subset SO(3) $ without loss of generality such that $G (D) = D$, i.e. $D$ (and hence $\partial D$) is invariant under the rotation group $ G $. 
Then, writing  $(x_1, x_2, x_3, \xi_1, \xi_2, \xi_3 )$ as a coordinate in $T^*( \mathbb{R}^3 )$, we recall the Lie algebra isomorophism:
\beqnx
j: \mathfrak{so}(3) = \{ A \in \mathbb{R}^{3\times 3} \,:\, A + A^T = 0 \} & \rightarrow & \mathbb{R}^3, \\
\begin{pmatrix}
0      & -a_3  &  a_2 \\
a_3  &  0      & -a_1\\
-a_2 &  a_1  & 0
\end{pmatrix}
&\mapsto&  (a_1, a_2, a_3),
\eqnx
where $j ([A,B]) = j (A) \times j (B)$ is the three dimensional cross product,
and the moment map:
\beqnx
\mu: T^*( \mathbb{R}^3 ) &\rightarrow & \mathfrak{so}(3)^* ,\\
\mu (x,\xi) & = &  \xi \times x \,.
\eqnx
Therefore the Hamiltonian that generates the one parameter subgroup $G \subset SO(3)$ is given by
\[
  \langle  \mu (x,\xi) , \overrightarrow{e_3}  \rangle = \langle  \xi \times x, (0,0,1)  \rangle = \xi_1 x_2 - \xi_2 x_1  \,.
\]
Now we define $F = (f_1, f_2)$, where $f_1(x,\xi) = \tilde{H} (x,\xi)$ defined as above and 
\beqnx
f_2 : T^*(\partial D) & \rightarrow  &\mathbb{R} \\
f_2(x,\xi) &=&     \langle  \mu ( \iota(x , \xi) ) ,  \overrightarrow{e_3}   \rangle,
\eqnx
where $\iota: T^*(\partial D) \rightarrow T^*( \mathbb{R}^3 )$ is the canonical embedding.
Notice that $\{f_i,f_j\} =  \delta_{ij}$ for $i =1,2$ since $G (\partial D) = \partial D $ and for all $g\in G$, the rotational symmetry gives that $g^* \tilde{H} = \tilde{H}$, and hence $\{f_2,f_1\} = X_{f_2} (\tilde{H}) = 0$. Hence $(f_1,f_2)$ forms a completely integrable system.

Next we gaze at $G^e_\alpha$, where $e = e_2 \in \mathcal{F} \subset \mathbb{R}$.
Let us first look into the case when $e_2 \neq 0$. With this system, we realize the two principal curvatures satisfy, for $i = 1,2$,
\[
\kappa_i (p) =  \kappa_i (  p_z ) \,,
\]
where $p \in \partial D \mapsto (p_x,p_y, p_z) \in \mathbb{R}^3$ is the canonical embedding (which can be represented via the parametrization $\mathbb{X}$), and hence in \eqref{eq:d2}:
\[
 \widetilde{\kappa_1 }(p) := \kappa_2 (p_z ) \,, \quad  \widetilde{\kappa_2 }(p) := \kappa_1 (p_z )\,.
\]
Moreover,  for $\omega = (\cos (\theta), \sin(\theta) ) \in \mathbb{S}^1$, writing $\{ v_i (p) \}_{i=1,2} \in T_p(\partial D) \cong T^*_p(\partial D)$ to be the principal directions, $A(p) v_i(p) = \kappa_i(p) v_i(p) $ under a geodesic normal coordinate and denoting $ \iota(p,\xi) = \left( p_x,p_y, p_z,  ( \xi_p )_x, ( \xi_p )_y, ( \xi_p )_z \right) \in \mathbb{R}^6$ with the choice that $(v_1(p) )_z = 0 $ (which makes the choice unique), we have $ f_2\left (p, \omega \sum_{i=1}^{2} \widetilde{\kappa_i (p) } \, \omega_i^2 \right) = e_2 $  if and only if $\theta$ satisfies
\beqn
 e_2 &=&  \left( \kappa_2^2 (p_z ) \cos^2 (\theta) + \kappa_1^2 (p_z ) \sin^2(\theta) \right)\notag\\
 &&\qquad\times  \left( 
  \cos(\theta) \left(   (v_1(p) )_x   p_y -  (v_1(p) )_y p_x    \right)   +
   \sin(\theta)  \left(   (v_2(p) )_x  p_y  - (v_2(p) )_y  p_x  \right)  \right) \notag \\
 & = & \left( \kappa_2^2 (p_z ) \cos^2 (\theta) + \kappa_1^2 (p_z ) \sin^2(\theta) \right) \notag\\
  &&\qquad\times  {   \sin\left(\theta + \tilde{ \theta }(p)  \right) } {\color{black}\times {
\sqrt{ \left(  (v_1(p) )_x   p_y -  (v_2(p) )_y p_x   \right)^2 + \left(  (v_2(p) )_x   p_y -  (v_1(p) )_y p_x   \right)^2 }
}},\notag \\
\label{eqn_leaf}
\eqn
where 
\[
\tilde{ \theta }(p) = \tan^{-1} \left( \frac{  (v_1(p) )_x   p_y -  (v_1(p) )_y p_x    }{ {\color{black}{ (v_2(p) )_x   p_y -  (v_2(p) )_y p_x  }} } \right).
\]
The RHS term in \eqref{eqn_leaf} is invariant by the rotational action (recall $g^* f_2 = f_2$ for all $g \in G$), and therefore is a function only of $(z_p,\theta)$.
$\theta$ can now be found via the tangent-half-angle formula and a solution to a sixth order polynomial. In any case, it is either an empty set if $e_2$ is too large, or a set of a finite number of points. Let us also denote $|e_{2}|_{\max} $ as the extremal value of $|e_{2}|$ such that the solution to \eqref{eqn_leaf} is non-empty, i.e. $$\mathcal{F} = [- |e_{2}|_{\max},  |e_{2}|_{\max} ] \,.$$ For a given $(p_z,e_2)$, we denote the number of solutions to \eqref{eqn_leaf} as $N(p_z,e_2)$.
If $ 0 < |e_2| \leq |e_{2}|_{\max}$, we can see that $1 \leq N(p_z,e_2) \leq 2$ and the solutions have the same value of $\cos(\theta)$.  Denoting them as $\theta_l(p_z,e_2)$, $l = 1,...,N(p_z,e_2) $, we have in \eqref{eq:d1}:
\[
\begin{split}
 G_{\alpha} \left( p, \{ \kappa_i(p) \}_{i=1}^{2} \right)  =& 2 \left(  \kappa_2 (p_z ) \cos^2 (\theta_1 (p_z,e_2)) + \kappa_1 (p_z ) \sin^2(\theta_1 (p_z,e_2)) \right)^{2 + 2 \alpha}\\
&\qquad\times \sqrt{     \kappa_2^2 (z_p ) \cos^2 (\theta_1 (p_z,e_2)) + \kappa_1^2 (z_p ) \sin^2(\theta_1 (p_z,e_2))    }\,.
\end{split}
\] 
It is now clear that $G^e_{\alpha} \left( p, \{ \kappa_i(p) \}_{i=1}^{2} \right)  $ increases seperately when either one of $ \kappa_1(p), \kappa_2(p) $ increases.
We remark that, in the case when $|e_2| = |e_{2}|_{\max}$, the only solution to \eqref{eqn_leaf} is when $\theta = 0$ if $e_2 > 0$ and $\theta = \pi$ if $e_2 < 0$, i.e. $N(p_z,e_2) = 1$.
We next gaze at when $e_2 = 0$.  When $(p_x,p_y) \neq (0,0) $, and taking $(v_1(p) )_z = 0 $, we only have $\theta = \frac{\pi}{2}$ and $\frac{\pi}{3}$ that satisfies \eqref{eqn_leaf}, and hence
\[
G^e_{\alpha} \left( p, \{ \kappa_i(p) \}_{i=1}^{2} \right)  = 2  \kappa_1^{3 + 2 \alpha} (p_z ) \,.
\]
When $p_x = p_y = 0$, we have $ \widetilde{\kappa_1 }(p)  =  \widetilde{\kappa_2 }(p) = 0$, and however we choose the directions $v_i(p)$, we then have that \eqref{eqn_leaf} is satisfied for all $ \theta \in [0, 2\pi)$, and in such a case:
\[
G^e_{\alpha} \left( p, \{ \kappa_i(p) \}_{i=1}^{2} \right)  =   \int_0^{2\pi}  \kappa (p_z )^{3 + 2 \alpha} d \theta  =  2 \pi \kappa (p_z )^{3 + 2 \alpha}  \,.
\]
Again, we can see that in either situations $G_{\alpha} \left( p, \{ \kappa_i(p) \}_{i=1}^{2} \right)  $ increases separately when either one of $ \kappa_1(p) $ increases.

Next, we look more closely into the flow induced by $X_{\tilde{\tilde{H}}}$ on $F_{(1, e_2)} $.  We first focus on the case when $ 0 < |e_2| < |e_{2}|_{\max} $. 
We denote $ p_{z,\max, e_2} $ and  $ p_{z,\min, e_2} $ as respectively the largest and smallest values of $p_z$ for $(p,\xi) \in F_{ (1, e_2) } $. Then we realize that
\[
\begin{split}
F_{ (1, e_2) } = & \bigcup_{p_z \in [p_{z,\min, e_2}, p_{z,\max, e_2}] } \bigg\{  \bigg( p, r(p_z,e_2) \left( \cos( \theta_l (p_z,e_2))  v_1 (p) + \sin( \theta_l (p_z,e_2))   v_2 (p) \right) \bigg) \\
&\hspace*{6cm}  l = 1,..., N(p_z,e_2)  \bigg\},
\end{split}
\]
where $ r(p_z,e_2) := \kappa_2^2 (z_p ) \cos^2 ( \theta_1 (p_z,e_2)) + \kappa_1^2 (z_p ) \sin^2( \theta_1 (p_z,e_2))  $.
It is remarked that when $p =  p_{z,\min, e_2} $ or $p = p_{z,\min, e_2} $, $\theta = 0 $ or $\pi$ and $p$ satisfies
\beqnx
 |e_2| =  \kappa_2^2 (p_z )  \sqrt{ p_x^2 +  p_y^2  },
\eqnx
where the RHS term is again a function only of $p_z$.
One can show that on $F_{(1, e_2)}$, we have $X_{\tilde{H}} = 0$ if and only if $\xi = 0$. 
Assume that we start with $(x,\xi) \in F_{(1, e_2)}$ and flow accordingly with $a_t = (p(t),\xi(t))$. Then there exists $[s_1,s_2]$ with the smallest interval length containing $0$ such that $p_z'(s_1) = p_z'(s_2) = 0 $ (which coincides with that when $p_z = p_{z,\max, e_2} $ or $p_z = p_{z,\min, e_2} $ ). Via reflection symmetry , we have for all $t \in \mathbb{R}, n \in \mathbb{Z}$:
\[
a_{t - s_1 + 2n (s_2-s_1) } = a_{ -(t - s_1) + n (s_2-s_1) } = a_{t - s_1 }.
\]
That is, $a_t$ is $2 (s_2-s_1) $ periodic and symmetric about $s_2$.  Hence the orbit of the flow generated by $X_{\tilde{H}} = 0$ on  $F_{ (1, e_2) }$ can either be periodic or quasi-periodic. It is realized that when $p_{z,\min, e_2}< p_z <  p_{z,\max, e_2}$, we have $N(p_z,e_2)= 2$; whereas, at $p_z =  p_{z,\max, e_2} $ or $p_z =  p_{z,\min, e_2}$, $\theta_l (p_z,e_2)) = 0 $ or $\pi$, and hence $N(p_z,e_2) = 1$.  
Therefore, tracing the points, we have
\[
F_{ (1, e_2) }  \cong \mathbb{T}^2,
\]
which is a smooth $2$-dimensional surface.
With that, there are only two cases if $ 0 < |e_2|  < |e_{2,\max}|$:
\begin{enumerate}
\item
When a Hamiltonian curve is periodic on $ \mathbb{T}^2$, a shift of the curve gives all the periodic orbits of the Hamiltonian flow. Hence we can exhaust the ergodic measures of $X_{\tilde{H}}$ on $F_{ (1, e_2) }$ indexed as $ \{ \mu_{(1,e_2),a} \}_{a \in [0,1)} $
where $\mu_{(1,e_2),a}$ contains the point $(p,\xi)$ such that $(p_x,p_y,p_z) = \left( \sqrt{ p_x^2+ p_y^2} \cos\left(2a \theta_{\text{per}}  \right),  \sqrt{ p_x^2+ p_y^2} \sin\left(2a \theta_{\text{per}} \right), p_{z,\max, e_2} \right)$ with $\theta_{\text{per}} = \cos^{-1} \left( \frac{ p_x(s_2) p_x(s_1) + p_y(s_2) p_y(s_1)}{ \sqrt{p_x^2(s_2)  + p_y^2(s_2)  } \sqrt{p_x^2(s_1)  + p_y^2(s_1)  }  } \right)$;
\item
When a curve is on the other hand quasi-periodic on $ \mathbb{T}^2$, we have that the flow of $X_{\tilde{H}}$ is ergodic on the torus $ \mathbb{T}^2$ on the Liouville measure restricted on $ \mathbb{T}^2$.
\end{enumerate}
If $|e_2| = |e_{2,\max}|$, we realize $N(p_z,e_2) = 1$, and 
\[
F_{ (1, e_2) }  \cong \mathbb{S}^1.
\]
If $e_2 = 0$, we can verify that $N(p_z,e_2) = 2$, and the solutions to \eqref{eqn_leaf} are $\theta = \frac{\pi}{2}$ and $\frac{\pi}{3}$. The flow is periodic with $\theta_{\text{per}}  = \cos^{-1} \left( \frac{ p_x(s_2) p_x(s_1) + p_y(s_2) p_y(s_1)}{ \sqrt{p_x^2(s_2)  + p_y^2(s_2)  } \sqrt{p_x^2(s_1)  + p_y^2(s_1)  }  } \right) = \pi $. We have
\[
F_{ (1, e_2) }  \cong \mathbb{T}^2 \,,
\]
and we can exhaust the ergodic measures of $X_{\tilde{H}}$ on $F_{ (1, e_2) }$ indexed as $ \mu_{(1,e_2),[0,1]} $
$\mu_{(1,e_2),a}$ containing the point $(p,\xi)$ such that $(p_x,p_y,p_z) = \left( \sqrt{ p_x^2+ p_y^2} \cos\left(2\pi a \right),  \sqrt{ p_x^2+ p_y^2} \sin\left(2\pi a \right), p_{z,\max, e_2} \right)$.

Finally, we notice that in all the cases, the joint flow given by $X_F$ is ergodic on $F_{ (1, e_2) }$ with respect to the Liouville measure for all possible values of $(1,e_2)$ such that $F_{ (1, e_2) }$ is non-empty:
it trivially holds when $F_{ (1, e_2) }  \cong \mathbb{S}^1$ and if the flow by $X_{\tilde{H}}$ is quasi-periodic on $F_{ (1, e_2) }  \cong \mathbb{T}^2$; and it holds when the flow by $X_{\tilde{H}}$ is periodic on $F_{ (1, e_2) }  \cong \mathbb{T}^2$ since $G$ induces a transitive on the index set $\{ \mu_{(1,e_2),a} \}_{a \in [0,1)} $.
Therefore Corollary \ref{corollary3} applies to obtain a density-one subsequence $i \in S(h) \subset J(h)$ of $ c_i | |D|^{-\frac{1}{2}} \phi^i (x) |^2 $ weakly converging to $\int \limits_{-|e_{2}|_{\max} }^{|e_{2}|_{\max}}  \frac{ \sigma_{x,F_{(1,e)}} \left( F_{(1,e_2)}(x)  \right)  }{  \sigma_{ F_{(1,e_2)}}  \left( F_{(1,e_2)} \right)   }   \, g_{i} (e_2)  \, de (e_2) $ where
\[
G^e_{-\frac{1}{2}} \left( p,  \{ \kappa_i(p) \}_{i=1}^{2} \right) \leq \sigma_{x,F_{(1,e)}} \left( F_{(1,e_2)}(x)  \right)  \leq 2 G^e_{-\frac{1}{2}} \left( p,  \{ \kappa_i(p) \}_{i=1}^{2} \right),
\]
with $G^e_{-\frac{1}{2}} \left( p,  \{ \kappa_i(\cdot) \}_{i=1}^{d-1} \right) $ in Lemma \ref{theorem_volume} as
\beqnx
G^e_{\frac{1}{2}} \left( p, \{ \kappa_i(p) \}_{i=1}^{2} \right) =
\begin{cases}
2 \sqrt{     \kappa_2^2 (z_p ) \cos^2 (\theta_1 (p_z,e_2)) + \kappa_1^2 (z_p ) \sin^2(\theta_1 (p_z,e_2))    }  & \text{ when }  e_2 \neq 0 \,, \\
2 \pi \kappa (p_z )  &  \text{ when }  e_2 = 0 \,. 
\end{cases}
\eqnx
It is clear now that the magnitude is monotonically increasing separately as each of the $\kappa_i$ increasing.

As a remark, a direct check of the (singular) fibration $\tilde{\pi}: \{\tilde{H} = 1\} \rightarrow \mathcal{F} := [ -|e_{2}|_{\max} ,  |e_{2}|_{\max}  ] $
with 
\[
\tilde{\pi}^{-1}(e_2) = 
\begin{cases}
\mathbb{T}^2 & \text{ when }  |e_2| < |e_{2,\max}| \,, \\
\mathbb{S}^1 & \text{ when } |e_2| = |e_{2,\max}| \, ,
\end{cases}
\]
is that, topologically,
\[
\{\tilde{H} = 1\} \cong S ( \mathbb{S}^2) \cong SO(3) \cong \mathbb{RP}^3
\]
via diffeomorphism where $S ( \mathbb{S}^2)$ is the sphere bundle of $\mathbb{S}^2$. Via the well known Gysin sequence, we obtain that
\[
H^0(\{\tilde{H} = 1\}; \mathbb{Z})  = \mathbb{Z}, \, H^1(\{\tilde{H} = 1\};\mathbb{Z} )  = 0, \, H^2(\{\tilde{H} = 1\};\mathbb{Z})  = \mathbb{Z}/2 \mathbb{Z},\, H^3(\{\tilde{H} = 1\};\mathbb{Z} )  = \mathbb{Z} \,.
\]
Now, one can verify that
\[
\bigcup_{e_2 \in   \mathcal{F}  } \tilde{\pi}^{-1}(e_2)  =  \mathbb{S}^1 \bigcup \left(  \mathbb{T}^2 \times (-|e_{2}|_{\max} ,  |e_{2}|_{\max} ) \right) \bigcup \mathbb{S}^1 \cong S ( \mathbb{S}^2) \,.
\]

    \end{Example}

\begin{Remark}
Our previous description and analysis may extend to systems that are nearly integrable via a perturbation analysis. Here, the system is given as a Komorogov non-degenerate perturbation of a (completely) integrable system (of class at least $C^{2(d-1), \alpha}$) via a classical KAM theory \cite{referee2_31,referee2_32,referee2_33}.  In such a case, if $k = n = d-1$, it is known that for an $\epsilon$-perturbation of the system, the flow will stay quasi-periodic on the surviving invariant Lagrangian tori which foliates/occupies $1-O(\sqrt \epsilon)$ of the space.   The invariant measures will then be localized on the these surviving invariant Lagrangian tori.
The dynamics in the remaining $O(\sqrt \epsilon)$-space may on the other hand be complicated, say e.g. Arnold diffusion may occur. However, when $d -1 = 2$, i.e. $d=3$, topological obstruction prevents the Arnold diffusion from happening. 

In the previous example with rotational symmetry with $d=3$, we may perturb a rotational symmetric shape to a shape of thin rod, and our result echoes with that in \cite{DLZ}.

\end{Remark}

\section{Localization/concentration of plasmon resonances for quasi-static wave scattering}\label{sect:5}

In this section, we extend all of the electrostatic results to the quasi-static case governed by the Helmholtz system. We refer to \cite{ACL2} for the discussion of the physical background, and moreover, by following the treatment therein, the concentration result in the quasi-static regime can be obtained by directly modifying the relevant results in the previous section. Hence, in what follows, we shall be brief in our discussion. 

Let $\varepsilon_0, \mu_0, \varepsilon_1 , \mu_1$ be real constants and assume that $\varepsilon_0$ and $\mu_0$ are positive. Let $D$ be given as that in Section~\ref{sect:1}, and set
\[
\mu_{D}  = \mu_1 \chi(D) +  \mu_0 \chi(\mathbb{R}^d \backslash \overline{D}),\quad 
\varepsilon_{D}  = \varepsilon_1 \chi(D) +  \varepsilon_0 \chi(\mathbb{R}^d \backslash \overline{D}). 
\]
Let $\omega\in\mathbb{R}_+$ denote the angular frequency of the operating wave. Set $k_0  := \omega \sqrt{ \varepsilon_0 \mu_0} $ and $k_1  := \omega \sqrt{ \varepsilon_1 \mu_1} $, with $\Im k_j\geq 0$, $j=0, 1$. 
Let $u_0$ be an entire solution to $(\Delta+k_0^2) u_0=0$ in $\mathbb{R}^d$. Consider the following Helmholtz scattering problem for $u\in H_{loc}^1(\mathbb{R}^d)$ satisfying
\beqn
    \begin{cases}
        \nabla \cdot (\frac{1}{\mu_{D} } \nabla u) + \omega^2 \varepsilon_{D} u = 0 &\text{ in }\; \mathbb{R}^d, \\[1.5mm]
         (\frac{\partial}{\partial |x| } - \mathrm{i} k_0 ) (u - u_0) = o(|x|^{ - \frac{d-1}{2}}) &\text{ as }\; |x| \rightarrow \infty. 
    \end{cases}
    \label{transmissionk}
\eqn
Henceforth, we assume that $\omega \ll 1$, or equivalently $k_0\ll 1$, which is known as the quasi-static regime. 

Let 
\beqn
    \Gamma_k (x-y) := C_{d} ( k|x-y| )^{- \frac{d-2}{2}} H^{(1)}_{\frac{d-2}{2}}(k|x-y|) ,
    \label{fundamental2}
\eqn
be the outgoing fundamental solution to $-(\Delta+k^2)$, where $C_d$ is some dimensional constant and  $H^{(1)}_{\frac{d-2}{2}}$ is the Hankel function of the first kind and order $(d-2)/2$. We introduce the following single-layer and NP operators associated with a given wavenumber $k\in\mathbb{R}_+$:
\beqn
    \mathcal{S}^k_{\partial D} [\phi] (x) &:=&  \int_{\partial D} \Gamma_k(x-y) \phi(y) d \sigma(y) ,\ \ x\in\partial D,\\
    {\mathcal{K}^k_{\partial D}}^* [\phi] (x) &:=& \int_{\partial D} \partial_{\nu_x} \Gamma_k(x-y) \phi(y) d \sigma(y) \, ,\ \ x\in\partial D. 
    \label{operatorK2}
\eqn

{\color{black}
Following the discussion in \cite{ACL2}, in the spirit of \eqref{eq:eigen1}, we consider the following generalized plasmon resonance problem when $\omega\in\mathbb{R}_+$: find a nontrivial $\phi \in H^{-1/2} ( \partial D, d \sigma)$ such that for some $m \in \mathbb{N} $,
\beqn
\bigg\{   \f{1}{2}  \left( \frac{1}{\mu_0} Id + \frac{1}{\mu_1} \, \left( \mathcal{S}^{k_1}_{\partial D} \right)^{-1}  \mathcal{S}^{k_0}_{\partial D} \right) +
\frac{1}{\mu_0} {\mathcal{K}^{k0}_{\partial D}}^* 
- \frac{1}{\mu_1} {\mathcal{K}^{k_1}_{\partial D}}^*   \left( \mathcal{S}^{k_1}_{\partial D} \right)^{-1}  \mathcal{S}^{k_0}_{\partial D}  \bigg\}^m  \phi  = 0.
\label{generalized_resonance}
\eqn
Here, $(\epsilon_1, \mu_1)$ is said to be a (pair of) generalized plasmonic eigenvalue.
We emphasize that $m$ must be finite in the equation.  When $m=1$, we refer to this problem as the plasmon resonance problem for $\omega\in\mathbb{R}_+$, and $(\epsilon_1, \mu_1)$ as a (pair of) generalized plasmonic eigenvalue.}

The following two lemmas in \cite{ACL2} characterize the plasmon resonance when $ \omega \ll  1$.

\begin{Lemma}
\label{close}
Under Assumption (A) and supposing $ \omega \ll  1$, we have that a solution $(  ( \mu_0,\mu_1,\eps_0,\eps_1,\omega ) , m, \phi_{\mu_0,\mu_1,\eps_0,\eps_1,\omega,m} ) $ to \eqref{generalized_resonance} with a unit $L^2$-norm possesses the following property for all $s \in \mathbb{R}$:
\beqnx
\begin{cases}
\| |D|^{s} \phi_{\mu_0,\mu_1,\eps_0,\eps_1,\omega, m}  - |D|^{s} \phi^i \|_{\mathcal{C}^{0}(\partial D)} &= \mathcal{O}_{i,s}(\omega^2),\medskip\\
 \lambda(\mu_0^{-1}, \mu_1^{-1}) - \tilde{\lambda}^i &= \mathcal{O}_{i}(\omega^2),\\
\end{cases}
\eqnx
for some eigenpair $( \tilde{\lambda}^i , \phi^i )$ of the Neumann-Poincar\'e operator $ {\mathcal{K}^*_{\partial D}}$, and $m \leq m_i$, where $m$ and $m_i$ signify the algebraic multiplicities of $\lambda$ and $\lambda_i$, respectively. 
\end{Lemma}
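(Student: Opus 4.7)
The plan is to treat $T(\omega)$, the operator inside the braces of \eqref{generalized_resonance}, as an analytic perturbation of its $\omega=0$ limit $T_{0}$ and apply Riesz-projection (Gohberg-Sigal) perturbation theory. First I would record the classical low-frequency expansion of the Helmholtz layer potentials: for $d\geq 3$ and $k$ small,
\[
\mathcal{S}^{k}_{\partial D} = \mathcal{S}_{\partial D} + k\,\mathcal{P} + O(k^{2}),\qquad (\mathcal{K}^{k}_{\partial D})^{*} = \mathcal{K}^{*}_{\partial D} + O(k^{2}),
\]
in operator norm on $H^{-1/2}(\partial D)$, where $\mathcal{P}[\phi] = c_{d}\bigl(\int_{\partial D}\phi\,d\sigma\bigr)\mathbf{1}_{\partial D}$ is rank-one (the $k$-linear correction to $\Gamma_{k}$ is spatially constant, so its normal derivative vanishes, killing the $k$-linear term in $(\mathcal{K}^{k}_{\partial D})^{*}$). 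Substituting into the defining expression for $T(\omega)$, expanding $(\mathcal{S}^{k_{1}}_{\partial D})^{-1}$ by a Neumann series, and using $k_{j}=\omega\sqrt{\varepsilon_{j}\mu_{j}}$, I would obtain
\[
T(\omega) = T_{0} + \omega\, T_{1} + O(\omega^{2}),\qquad T_{0} = \tfrac{1}{\mu_{0}}\bigl(\tfrac12 Id + \mathcal{K}^{*}_{\partial D}\bigr) + \tfrac{1}{\mu_{1}}\bigl(\tfrac12 Id - \mathcal{K}^{*}_{\partial D}\bigr),
\]
with $T_{1} = \frac{\sqrt{\varepsilon_{0}\mu_{0}}-\sqrt{\varepsilon_{1}\mu_{1}}}{\mu_{1}}\bigl[\tfrac12 Id - \mathcal{K}^{*}_{\partial D}\bigr]\mathcal{S}_{\partial D}^{-1}\mathcal{P}$. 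The equation $T_{0}\phi = 0$ is, up to the nonzero factor $\mu_{0}^{-1}-\mu_{1}^{-1}$, precisely the NP eigenvalue problem, identifying $\lambda(\mu_{0}^{-1},\mu_{1}^{-1})$ with some NP eigenvalue $\tilde\lambda^{i}$ at leading order.

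The crucial cancellation that produces the $O(\omega^{2})$ rate (rather than the naive $O(\omega)$) is that $T_{1}$ annihilates the entire generalized NP eigenspace $E_{i}$. Indeed, integrating $\mathcal{K}^{*}_{\partial D}\phi = \lambda\phi$ against $\mathbf{1}_{\partial D}$ and using the duality identity $\int_{\partial D}\mathcal{K}^{*}_{\partial D}[\phi]\,d\sigma = \int_{\partial D}\phi\,\mathcal{K}_{\partial D}[\mathbf{1}]\,d\sigma = -\tfrac12\int_{\partial D}\phi\,d\sigma$ forces $(\lambda+\tfrac12)\int_{\partial D}\phi\,d\sigma = 0$. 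Since the NP spectrum lies in $(-\tfrac12,\tfrac12]$, one has $\int_{\partial D}\phi^{i}\,d\sigma = 0$; induction along a Jordan chain (applying the same identity to $(\mathcal{K}^{*}_{\partial D}-\tilde\lambda^{i})\psi = \phi_{0}\in E_{i}$ with $\int_{\partial D}\phi_{0}\,d\sigma = 0$) extends this to every $\psi\in E_{i}$. Hence $\mathcal{P}|_{E_{i}}\equiv 0$, and therefore $T_{1}|_{E_{i}}\equiv 0$.

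With the cancellation in hand, I would fix a small circle $\Gamma_{i}\subset\mathbb{C}$ enclosing only the eigenvalue $0$ of $T_{0}$ that corresponds to $\tilde\lambda^{i}$ and consider the Riesz projection $\Pi(\omega):=(2\pi i)^{-1}\oint_{\Gamma_{i}}(zId-T(\omega))^{-1}dz$. Analytic Fredholm theory makes $\omega\mapsto\Pi(\omega)$ analytic for $|\omega|\ll 1$ and gives $\mathrm{rank}\,\Pi(\omega) = \mathrm{rank}\,\Pi(0) = m_{i}$, producing $m\leq m_{i}$. The resolvent identity combined with $T_{1}\Pi(0) = 0$ upgrades the naive bound $\|\Pi(\omega)-\Pi(0)\|=O(\omega)$ to $\|\Pi(\omega)-\Pi(0)\|=O(\omega^{2})$ on $H^{-1/2}(\partial D)$. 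A standard eigenvalue perturbation argument inside $\mathrm{Range}(\Pi(\omega))$ then yields $|\lambda(\mu_{0}^{-1},\mu_{1}^{-1})-\tilde\lambda^{i}|=O(\omega^{2})$, and projecting $\phi_{\mu_{0},\mu_{1},\varepsilon_{0},\varepsilon_{1},\omega,m}\in\mathrm{Range}(\Pi(\omega))$ onto a matching eigenpair $(\tilde\lambda^{i},\phi^{i})$ gives $H^{-1/2}$-closeness at rate $O(\omega^{2})$. To upgrade to the stated $\mathcal{C}^{0}(\partial D)$-closeness of $|D|^{s}(\phi_{\ldots}-\phi^{i})$ for arbitrary $s\in\mathbb{R}$, I would bootstrap through the residual equation $(\mathcal{K}^{*}_{\partial D}-\tilde\lambda^{i})(\phi_{\ldots}-\phi^{i}) = O(\omega^{2})$ using the ellipticity of $\mathcal{K}^{*}_{\partial D}$ guaranteed by Assumption~(A) together with Sobolev embedding, which also yields smoothness of $\phi^{i}$. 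The main obstacle of the entire argument is the cancellation established in the second paragraph; without $T_{1}|_{E_{i}}\equiv 0$, the convergence rate would be only $O(\omega)$ and the lemma's $O(\omega^{2})$ assertion would fail.
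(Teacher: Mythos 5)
The paper offers no proof of Lemma~\ref{close}: it is imported verbatim from the cited reference \cite{ACL2} (``The following two lemmas in \cite{ACL2} characterize the plasmon resonance\dots''), so there is no in-paper argument to compare against. Your route --- low-frequency asymptotics of the Helmholtz layer potentials followed by a Gohberg--Sigal/Riesz-projection perturbation of the operator pencil $T(\omega)$ --- is the standard one and is in substance what the cited source does; the overall architecture is sound and the identification of the rank-one first correction as the obstacle to the $\mathcal{O}(\omega^2)$ rate is exactly the right place to focus.

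Two concrete inaccuracies should be repaired. First, the duality identity has the wrong sign: by the Gauss identity $\mathcal{K}_{\partial D}[\mathbf{1}]=+\tfrac12$ on a smooth boundary, so pairing $\mathcal{K}^*_{\partial D}\phi=\lambda\phi$ with $\mathbf{1}$ yields $(\lambda-\tfrac12)\int_{\partial D}\phi\,d\sigma=0$, not $(\lambda+\tfrac12)\int_{\partial D}\phi\,d\sigma=0$. Since $\tfrac12$ \emph{is} an NP eigenvalue (with the equilibrium density $\mathcal{S}_{\partial D}^{-1}\mathbf{1}$ as eigenfunction), your deduction ``the spectrum lies in $(-\tfrac12,\tfrac12]$, hence $\int\phi^i=0$'' fails precisely at $\lambda=\tfrac12$; it is harmless for the plasmonic eigenvalues near $0$ that the paper cares about, but as written the step is false. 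A cleaner fix makes the restriction to $E_i$ unnecessary: since $\mathcal{P}$ maps onto constants and, by the symmetrization identity \eqref{eq:s3}, $\mathcal{K}^*_{\partial D}\mathcal{S}_{\partial D}^{-1}\mathbf{1}=\mathcal{S}_{\partial D}^{-1}\mathcal{K}_{\partial D}\mathbf{1}=\tfrac12\mathcal{S}_{\partial D}^{-1}\mathbf{1}$, your $T_1=c\,[\tfrac12 Id-\mathcal{K}^*_{\partial D}]\mathcal{S}_{\partial D}^{-1}\mathcal{P}$ vanishes identically on all of $H^{-1/2}(\partial D)$. Second, the expansion $\mathcal{S}^k_{\partial D}=\mathcal{S}_{\partial D}+k\mathcal{P}+O(k^2)$ is a $d=3$ phenomenon, not valid ``for $d\geq 3$'': for odd $d\geq5$ the first correction to $\Gamma_k$ is already $O(k^2)$ (so no cancellation is needed), while for even $d$ there are $k^{d-2}\log k$-type constant terms --- in $d=4$ a rank-one $k^2\log k$ term that the same cancellation must be invoked to remove, or else the rate degrades to $\mathcal{O}(\omega^2|\log\omega|)$. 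The argument survives in every dimension, but the dimension-uniform expansion you state does not.
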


%

\begin{Lemma}
\label{existence}
Given any non-zero $\lambda_i \in \sigma( \mathcal{K}_{\partial D}^* )$ and for any $ (\tilde{\mu}_0, \tilde{\mu}_1) \in  D_i := \{ (\mu_0, \mu_1) \in \mathbb{C}^2\backslash \{(0,0)\} \,: \, \lambda(\tilde{\mu}_0^{-1}, \tilde{\mu}_1^{-1} ) = \tilde{\lambda}^i \,, \, \mu_0 - \mu_1 \neq 0 \,  \} $ (which is non-empty), there exists $ 0 < \omega_i \ll  1$ such that for all $\omega < \omega_i$, the set 
\beqnx
 &&\bigg\{ ( \mu_0,\mu_1,\eps_0,\eps_1 ) \in  \mathbb{C}^2 \backslash\{\mu_0 - \mu_1 = 0 \} \times ( \mathbb{C} \backslash \mathbb{R}^+ )^2 ; \\
&& \text{ there exists } m \in \mathbb{N}, \phi \in H^{-1/2} (\partial D, \sigma)  \text{ such that } (  ( \mu_0,\mu_1,\eps_0,\eps_1,\omega ) , \phi, m) \text { satisfies } \eqref{generalized_resonance} \bigg\}
\eqnx
forms a complex co-dimension $1$ surface in a neighborhood of $(\tilde{\mu}_0, \tilde{\mu}_1)$.
\end{Lemma}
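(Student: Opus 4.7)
The plan is to reduce the generalized resonance condition \eqref{generalized_resonance} to the vanishing of a single holomorphic scalar function via a Riesz-projection (Lyapunov--Schmidt) argument, and then to invoke the Weierstrass preparation theorem to identify its zero locus as a codimension-one hypersurface. Denote by $T = T(\mu_0,\mu_1,\epsilon_0,\epsilon_1,\omega)$ the operator on $H^{-1/2}(\partial D)$ appearing inside the curly braces of \eqref{generalized_resonance}. Because $\mathcal{S}^{k}_{\partial D}$ and $\mathcal{K}^{k*}_{\partial D}$ admit convergent power-series expansions in $k$ at $k=0$ (using $d\geq 3$ so that $\mathcal{S}_{\partial D}=\mathcal{S}^0_{\partial D}$ is invertible on $H^{-1/2}$), $T$ is an analytic family of bounded operators in all five complex parameters on some neighborhood $\mathcal{N}$ of $(\tilde{\mu}_0,\tilde{\mu}_1,\epsilon_0,\epsilon_1,0)$. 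A direct computation gives $T|_{\omega=0}$ as a nonvanishing scalar multiple (namely $\mu_0^{-1}-\mu_1^{-1}$, which does not vanish on $D_i$) of a pencil of the form $\lambda(\mu_0^{-1},\mu_1^{-1})\,Id - \mathcal{K}^*_{\partial D}$, so that $0$ is isolated in the spectrum of $T|_{\omega=0,(\mu_0,\mu_1)=(\tilde{\mu}_0,\tilde{\mu}_1)}$ with total algebraic multiplicity $m_i$.

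Fix a small contour $\Gamma$ encircling this isolated spectral point alone. Standard analytic perturbation theory furnishes, after shrinking $\mathcal{N}$, a rank-$m_i$ Riesz projector
$$ P(\mu_0,\mu_1,\epsilon_0,\epsilon_1,\omega) \;=\; \frac{1}{2\pi\mathrm{i}}\oint_{\Gamma}(z-T)^{-1}\,dz $$
that depends holomorphically on all five parameters throughout $\mathcal{N}$. Since the generalized kernel of any finite-dimensional linear map determines whether some power of it is singular, the existence of $m\in\mathbb{N}$ and a nontrivial $\phi\in H^{-1/2}(\partial D)$ satisfying $T^m\phi=0$ is equivalent to
$$ F(\mu_0,\mu_1,\epsilon_0,\epsilon_1,\omega) \;:=\; \det\bigl(T|_{\mathrm{Range}(P)}\bigr) \;=\; 0. $$
Computed in any holomorphically varying basis of $\mathrm{Range}(P)$, this determinant is holomorphic on $\mathcal{N}$. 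Evaluating at $\omega=0$ yields $F|_{\omega=0} = c(\mu_0,\mu_1)\bigl(\lambda(\mu_0^{-1},\mu_1^{-1})-\tilde{\lambda}^i\bigr)^{m_i}$ with $c$ nonvanishing on $D_i$, and since $\partial_{\mu_0}\lambda(\mu_0^{-1},\mu_1^{-1})\neq 0$ at $(\tilde{\mu}_0,\tilde{\mu}_1)$ (which only uses $\mu_1\neq 0$), the function $\mu_0\mapsto F(\mu_0,\tilde{\mu}_1,\epsilon_0,\epsilon_1,0)$ has an isolated zero of exact order $m_i$ at $\tilde{\mu}_0$.

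The Weierstrass preparation theorem, applied to $F$ in the variable $\mu_0-\tilde{\mu}_0$, now factors $F$ on a (possibly smaller) neighborhood as a nonvanishing holomorphic unit times a distinguished pseudopolynomial of degree $m_i$ in $(\mu_0-\tilde{\mu}_0)$, whose coefficients are holomorphic in $(\mu_1,\epsilon_0,\epsilon_1,\omega)$ and vanish at $(\tilde{\mu}_1,\epsilon_0,\epsilon_1,0)$. Consequently, for every fixed $\omega$ with $\omega<\omega_i$, the zero set of $F(\cdot,\omega)$ in a neighborhood of $(\tilde{\mu}_0,\tilde{\mu}_1,\epsilon_0,\epsilon_1)\in\mathbb{C}^4$ is a complex codimension-$1$ analytic hypersurface, which is the asserted claim. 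The main technical obstacle is the uniform spectral separation required for the Riesz projector $P$ to vary holomorphically throughout $\mathcal{N}$: one must check that $\Gamma$ lies in the resolvent set of $T$ uniformly in the parameters, which reduces to a uniform resolvent estimate $\|(z-T)^{-1}\|_{H^{-1/2}\to H^{-1/2}}\leq C$ for $z\in\Gamma$. This follows from Fredholm theory for the leading-order pencil $\lambda\,Id - \mathcal{K}^*_{\partial D}$ together with the $\mathcal{O}(\omega^2)$ control of the perturbation inherited from the expansions of $\mathcal{S}^{k}$ and $\mathcal{K}^{k*}$, which is the same analytic mechanism underlying Lemma~\ref{close}.
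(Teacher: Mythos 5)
Your argument is essentially correct, and in fact the paper does not prove this lemma at all: it imports both Lemma~\ref{close} and Lemma~\ref{existence} from \cite{ACL2}, where the standard route is the Gohberg--Sigal/analytic-Fredholm machinery for characteristic values of holomorphic operator-valued functions. Your Riesz-projector reduction to a finite determinant $F=\det(T|_{\mathrm{Range}(P)})$, followed by Weierstrass preparation in $\mu_0-\tilde{\mu}_0$, is the same circle of ideas in equivalent form, and you correctly identify the key structural facts: $T|_{\omega=0}$ is the scalar $(\mu_0^{-1}-\mu_1^{-1})$ times the static NP pencil, the hypothesis $\tilde{\lambda}^i\neq 0$ is what makes $0$ an \emph{isolated} point of finite algebraic multiplicity in $\sigma(T|_{\omega=0})$ (so that the contour $\Gamma$ and the rank-$m_i$ projector exist), the nilpotent part contributes nothing to the determinant so $F|_{\omega=0}=c\,(\lambda-\tilde{\lambda}^i)^{m_i}$, and $\partial_{\mu_0}\lambda=\mu_1/(\mu_1-\mu_0)^2\neq 0$ gives a zero of exact finite order.

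One caveat you should make explicit: the claimed ``convergent power-series expansion in $k$ at $k=0$'' of $\Gamma_k$, hence joint holomorphy of $T$ in all five variables including $\omega$, fails for even $d$, since $H^{(1)}_{(d-2)/2}$ with integer order carries $k^{2j}\log k$ terms. This does not break the lemma, because the statement fixes $\omega$: for each fixed small $\omega$ the map $(\mu_0,\mu_1,\eps_0,\eps_1)\mapsto T$ is still holomorphic on a suitable branch domain (this is where $(\mathbb{C}\setminus\mathbb{R}^+)^2$ and the choice $\Im k_j\geq 0$ enter, to fix branches of $\sqrt{\eps_j\mu_j}$ and of the logarithm), and $\|T(\omega)-T(0)\|$ is still $o(1)$ as $\omega\to 0$, uniformly on the parameter neighborhood. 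So you should either restrict the Weierstrass preparation to the four material parameters for each fixed $\omega$ (using Rouch\'e to see that $\mu_0\mapsto F$ retains exactly $m_i$ zeros near $\tilde{\mu}_0$, so the zero set is a nonempty codimension-one analytic set), or restrict the claim of five-variable holomorphy to odd $d$. With that adjustment the proof is complete.
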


By Lemma~\ref{existence}, we easily see that there are infinitely many choices of $(\varepsilon_1, \mu_1)$ such that the (genearalized) plasmon resonance occurs around $\tilde{\lambda}^i$. Combining with a similar perturbation argument as in the proof of Lemma~\ref{existence}, our conclusions of the plasmon resonance in the electrostatic case transfers to the Helmholtz transmission problem to show concentration of plasmon resonances at high-curvature points.  

\begin{Theorem}
\label{theorem4}
Given any $x \in \partial D$, let us consider $ \{ \chi_{x,\delta} \}_{\delta > 0}$ being a family of smooth nonnegative bump functions compactly supported in $B_{\delta} (x)$ with $ \int_{\partial D} \chi_{p,\delta} \, d \sigma = 1$.
Under Assumption (A),  {\color{black} when $d \geq 3$,} fixing a compact convex polytope $ \mathcal{C}  \subset \mathcal{F} \subset \mathbb{R}^{k-1}$, $[r,s] \subset{\mathbb{R}}$, $\alpha\in\mathbb{R}$ and $p, q \in \partial D$, 
we have a choice of $\delta(h)$ and $\omega(h) $ both depending on $\mathcal{C},p,q$ and $\alpha$ such that for any $\omega <  \omega (h)$, there exists 
$$\left( ( \mu_{0,i},\mu_{1,i},\eps_{0,i},\eps_{1,i}, \omega ), m_i , \phi_{ \mu_{0,i},\mu_{1,i},\eps_{0,i},\eps_{1,i}, \omega, m_i }  \right) $$ solving \eqref{generalized_resonance}, and as $h\rightarrow +0$, we have $\delta(h) \rightarrow 0$,  $\omega(h) \rightarrow 0$ and
\begin{equation}\label{concentration11}
\begin{split}
   & \frac{
        \sum_{\left\{\left( \lambda^i_1(h), \lambda^i_2(h), \cdots, \lambda^i_k(h)\right) \in [r,s] \times \mathcal{C}\right\}} c_i \,
        \int_{\partial D} \chi_{p,\delta (h) }(x) | |D|^{\alpha}  \phi_{ \mu_{0,i},\mu_{1,i},\eps_{0,i},\eps_{1,i}, \omega, m_i }  (x) |^2 d \sigma (x) 
    }
    {
        \sum_{\left\{\left(\lambda^i_1 (h), \lambda^i_2(h), \cdots, \lambda^i_k(h)\right)\in [r,s] \times \mathcal{C}\right\}} c_i \,
        \int_{\partial D} \chi_{q,\delta (h) }(x) | |D|^{\alpha}  \phi_{ \mu_{0,i},\mu_{1,i},\eps_{0,i},\eps_{1,i}, \omega, m_i }  (x) |^2 d \sigma (x) 
    } \\ &=   \frac{     \int_{\mathcal{F} } \int_{ F_{(1,e)}(p)  }  | \xi |_{g(y)}^{1+ 2 \alpha } d \sigma_{p,F_{(1,e)}} \, h(e) d e  }{   \int_{\mathcal{F} } \int_{F_{(1,e)}(q)  }  | \xi |_{g(y)}^{1+ 2 \alpha } d \sigma_{q,F_{(1,e)}} \, h(e) d e   } + o_{\mathcal{C},r,s,p,q,\alpha}(1), \notag \\
\end{split}
\end{equation}
where $c_i := | \phi^i |_{H^{-\frac{1}{2}}(\partial D , d \sigma)}^{-2} $.  Here, the little-$o$ depends on $\mathcal{C},r,s,p,q$ and $\alpha$.
\end{Theorem}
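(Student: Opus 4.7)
The plan is to reduce Theorem~\ref{theorem4} to Theorem~\ref{theorem1} via a two-step perturbation argument. For each $h$, the index set $J(h) := \{ i : (\lambda^i_1(h), \ldots, \lambda^i_k(h)) \in [r,s] \times \mathcal{C}\}$ is finite since the joint spectrum is discrete and $[r,s] \times \mathcal{C}$ is compact. For each $i \in J(h)$ with $\tilde{\lambda}^i \neq 0$, Lemma~\ref{existence} furnishes a threshold $\omega_i > 0$ so that for every $\omega < \omega_i$ there exist parameters $(\mu_{0,i}, \mu_{1,i}, \eps_{0,i}, \eps_{1,i})$, an integer $m_i \in \mathbb{N}$, and a generalized resonant mode $\phi_i := \phi_{\mu_{0,i}, \mu_{1,i}, \eps_{0,i}, \eps_{1,i}, \omega, m_i}$ solving \eqref{generalized_resonance}, normalized to have unit $L^2$-norm.

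Lemma~\ref{close} then gives the key $\mathcal{C}^0$-closeness
\[
\bigl\| |D|^\alpha \phi_i - |D|^\alpha \phi^i \bigr\|_{\mathcal{C}^0(\partial D)} = \mathcal{O}_{i, \alpha}(\omega^2).
\]
Factoring the pointwise difference as $||D|^\alpha \phi_i|^2 - ||D|^\alpha \phi^i|^2 = 2\Re\bigl(\overline{|D|^\alpha \phi^i}\,(|D|^\alpha \phi_i - |D|^\alpha \phi^i)\bigr) + ||D|^\alpha \phi_i - |D|^\alpha \phi^i|^2$ and integrating against $\chi_{p, \delta}$, one obtains
\[
\biggl| \int_{\partial D} \chi_{p, \delta}(x) \bigl( ||D|^\alpha \phi_i(x)|^2 - ||D|^\alpha \phi^i(x)|^2 \bigr)\, d\sigma(x) \biggr| \leq C_{i, \alpha}\, \omega^2,
\]
for a constant $C_{i,\alpha}$ depending on $\||D|^\alpha \phi^i\|_{L^2(\partial D,d\sigma)}$ and the implicit constant in Lemma~\ref{close}. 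An identical bound holds for the neighborhood of $q$.

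The final step is to pass to the weighted sum over $J(h)$ and control the perturbative error. Summing yields a total error bounded by $E(h, \omega) := \omega^2 \sum_{i \in J(h)} c_i C_{i, \alpha}$, which is finite for each $h$ (as $J(h)$ is finite). Meanwhile, the unperturbed main term scales like $h^{-(d+2\alpha)}$ by the generalized Weyl's law underlying the proof of Theorem~\ref{theorem1}. One may therefore choose $\omega(h) < \min_{i \in J(h)} \omega_i$ small enough (possibly decaying much faster than $h$) that $\omega(h) \to 0$ and $E(h, \omega(h))$ divided by the main term is $o(1)$; taking $\delta(h)$ as in the proof of Theorem~\ref{theorem1}, that theorem applied to the leading contributions of numerator and denominator yields the desired ratio identity, with the perturbative corrections absorbed into $o_{\mathcal{C}, r, s, p, q, \alpha}(1)$.

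The main obstacle is the non-uniformity of the error constants $C_{i, \alpha}$ in $i$: the $\mathcal{O}_{i, \alpha}(\omega^2)$ bound from Lemma~\ref{close} depends on the individual NP eigenfunction $\phi^i$ and need not be uniform as $i$ varies over $J(h)$ (let alone as $h \to 0^+$). Because only $\omega(h) \to 0$ is required, this can be circumvented by an ad hoc choice of $\omega(h)$ adapted to $h$, as outlined above. A cleaner resolution would require a uniform-in-$i$ version of Lemma~\ref{close} controlled only by the symbol class of $\mathcal{K}^{k}_{\partial D}$; this is of independent interest but is not needed for the statement at hand.
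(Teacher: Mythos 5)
Your proposal is correct and follows essentially the same route as the paper: reduce to Theorem~\ref{theorem1} for the NP eigenfunctions, invoke Lemma~\ref{existence} for existence of the quasi-static modes and Lemma~\ref{close} for the $\mathcal{O}_{i,\alpha}(\omega^2)$ closeness, and then absorb the $i$-dependent (non-uniform) perturbation constants by choosing $\omega(h)$ adapted to the finite index set $J(h)$ — precisely the mechanism the paper implements via its quantity $\Theta$.
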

\begin{proof}
From Theorem \ref{theorem1},  we have a choice of $\delta(h)$ depending on $\mathcal{C},p,q$ and $\alpha$ such that,
for any given $\varepsilon>0$, there exists $h_0$ depending on $\mathcal{C},p,q,\alpha$ such that for all $h < h_0$,
\begin{equation}
\begin{split}
  &  \Bigg| \frac{ \sum_{\left\{\left(\lambda^i_1(h), \lambda^i_2(h), \cdots, \lambda^i_k(h)\right)\in [r,s] \times \mathcal{C}\right\}} c_i  \int_{\partial D} \chi_{p,\delta (h) }(x) | | D |^{\alpha}   \, \phi^i (x) |^2 d \sigma }{ \sum_{\left\{\left( \lambda^i_1(h), \lambda^i_2(h), \cdots, \lambda^i_k(h)\right)\in [r,s] \times \mathcal{C}\right\}} c_i  \int_{\partial D} \chi_{q,\delta (h) }(x) | | D |^{\alpha}   \, \phi^i (x) |^2 d \sigma } \\
  &\hspace*{2cm}-  \frac{     \int_{\mathcal{F} } \int_{ F_{(1,e)}(p)  }  | \xi |_{g(y)}^{1+ 2 \alpha } d \sigma_{p,F_{(1,e)}} \, h(e) d e  }{   \int_{\mathcal{F} } \int_{F_{(1,e)}(q)  }  | \xi |_{g(y)}^{1+ 2 \alpha } d \sigma_{q,F_{(1,e)}} \, h(e) d e   }  \Bigg| \\ &\leq \varepsilon.
    \label{ineq11a}
\end{split}
\end{equation}

Now for each $h < h_0$, from Lemma \ref{existence}, there exists $\tilde{\omega}(h) := \min_{\{ i \in \mathbb{N}:  \left( \lambda^i_1(h), \lambda^i_2(h), \cdots, \lambda^i_k(h)\right) \in \mathcal{C}\} }\{ \omega_i \}$ such that for all $\omega < \omega (h)$, there exists 
$$\left( ( \mu_{0,i},\mu_{1,i},\eps_{0,i},\eps_{1,i}, \omega ), m_i , \phi_{ \mu_{0,i},\mu_{1,i},\eps_{0,i},\eps_{1,i}, \omega, m_i }  \right) $$ solving \eqref{generalized_resonance}.   By Lemma \ref{close}, upon a rescaling of $\phi_{ \mu_{0,i},\mu_{1,i},\eps_{0,i},\eps_{1,i}, \omega, m_i } $ while still denoting it as $\phi_{ \mu_{0,i},\mu_{1,i},\eps_{0,i},\eps_{1,i}, \omega, m_i } $, we have
\beqnx
\| |D|^{\alpha} \phi_{ \mu_{0,i},\mu_{1,i},\eps_{0,i},\eps_{1,i}, \omega, m_i }  - |D|^{\alpha} \phi^i \|_{\mathcal{C}^{0 }(\partial D)} \leq  C_{i,\alpha} \omega^2.
\eqnx
In particular, we can make a smaller choice of  $\omega(h) < \tilde{\omega}(h) $ depending on $\mathcal{C},r,s,p,q,\alpha$ such that for all $\omega < \omega(h)$, we have
\beqnx
\| | |D|^{\alpha} \phi_{ \mu_{0,i},\mu_{1,i},\eps_{0,i},\eps_{1,i}, \omega, m_i } |^2  - | |D|^{\alpha} \phi^i |^2 \|_{\mathcal{C}^{0 }(\partial D)} \leq & 10^{-2} \varepsilon /\Theta,
\eqnx
where
\[
\Theta:= \sum_{\left( \lambda^i_1 (h), \lambda^i_2(h), \cdots, \lambda^i_k(h)\right) \in [r,s] \times \mathcal{C}} \hspace{-0.99cm}c_i / \min \left\{1,  \min_{y = p,q} \left\{  \left( \sum_{\left(\lambda^i_1(h), \lambda^i_2(h), \cdots, \lambda^i_k(h)\right) \in [r,s]  \times \mathcal{C}} \vartheta_i  \right)^{-2} \right \}  \right \} \,,
\]
and
\[
\vartheta_i:=c_i  \int_{\partial D} \chi_{y,\delta (h) }(x) | | D |^{\alpha}   \, \phi^i (x) |^2 d \sigma. 
\]

Therefore, with this choice of $\omega(h) $, we have, for all $\omega < \omega(h)$

\begin{equation}\label{ineq11b}
\begin{split}
&\Bigg|
\frac{
        \sum_{\left\{\left( \lambda^i_1(h), \lambda^i_2(h), \cdots, \lambda^i_k(h)\right) \in [r,s] \times \mathcal{C}\right\}} c_i \,
        \int_{\partial D} \chi_{p,\delta (h) }(x) | |D|^{\alpha}  \phi_{ \mu_{0,i},\mu_{1,i},\eps_{0,i},\eps_{1,i}, \omega, m_i }  (x) |^2 d \sigma (x) 
    }
    {
        \sum_{\left\{\left(\lambda^i_1 (h), \lambda^i_2(h), \cdots, \lambda^i_k(h)\right)\in [r,s] \times \mathcal{C}\right\}} c_i \,
        \int_{\partial D} \chi_{q,\delta (h) }(x) | |D|^{\alpha}  \phi_{ \mu_{0,i},\mu_{1,i},\eps_{0,i},\eps_{1,i}, \omega, m_i }  (x) |^2 d \sigma (x) 
    } \\
& \qquad\qquad\qquad-
  \frac{     \int_{\mathcal{F} } \int_{ F_{(1,e)}(p)  }  | \xi |_{g(y)}^{1+ 2 \alpha } d \sigma_{p,F_{(1,e)}} \, h(e) d e  }{   \int_{\mathcal{F} } \int_{F_{(1,e)}(q)  }  | \xi |_{g(y)}^{1+ 2 \alpha } d \sigma_{q,F_{(1,e)}} \, h(e) d e   } 
 \Bigg| \leq \varepsilon.
\end{split}
\end{equation}
Combining \eqref{ineq11b} with \eqref{ineq11a} readily yields our conclusion.

The proof is complete. 
\end{proof}

In a similar manner, we obtain the following result.
\begin{Theorem}
\label{theorem5}
Under Assumption (A), {\color{black} when $d \geq 3$,} given a compact convex polytope $\mathcal{C} \subset \mathbb{R} \times \mathcal{F} \subset \mathbb{R}^k$, there exists $S(h) \subset J(h) := \{i \in \mathbb{N} : \left(\lambda^i_1(h), \lambda^i_2(h), \cdots, \lambda^i_k(h)\right) \in \mathcal{C} \}$ and $\omega(h) $ such that, for all $\varphi \in C^\infty ( \partial D)$, we have for any $\omega < \omega (h)$, there exists 
$$\left( ( \mu_{0,i},\mu_{1,i},\eps_{0,i},\eps_{1,i}, \omega ), m_i , \phi_{ \mu_{0,i},\mu_{1,i},\eps_{0,i},\eps_{1,i}, \omega, m_i }  \right) $$ solving \eqref{generalized_resonance}, such that as $h\rightarrow+0$, we have $\omega(h) \rightarrow 0$ and
\begin{equation}\label{eq:tt1}
\begin{split}
& \max_{i \in S(h)}  \bigg|   \int_{ \partial D}  \varphi(x) \bigg( c_i \, | |D|^{-\frac{1}{2}} \phi_{ \mu_{0,i},\mu_{1,i},\eps_{0,i},\eps_{1,i}, \omega, m_i }  (x) |^2\\
& \hspace*{2cm} - \int_{\mathcal{F} } \int_{M_{X_{F},\text{erg}}( F_{(1,e)} )}   \mu(x,e)     \, g_{i} (\mu_e)  \,  d\nu_e \, (\mu_e  )  \, de (e)    \bigg) d \sigma(x)
 \bigg|= o_{r,s}(1)  \,.
 \end{split}
\end{equation}
Here, $S(h)$, $\{ g_{i} : \bigcup_{e \in \mathcal{F}} M_{X_{F},\text{erg}}( F_{(1,e)} )  \rightarrow \mathbb{C}  \}_{i \in \mathbb{N} }$ and $ \mu(p,e)$ are described as in Theorem~\ref{theorem2}.
In particular, we remind that
\beqnx
 \frac{\int_{M_{X_{F},\text{erg}}(F_{(1,e)})}  \mu(p,e)  \, d \nu_e(\mu_e)  }{  \int_{M_{X_{F},\text{erg}}(F_{(1,e)})}  \mu(q,e)  \, d \nu_e(\mu_e) }  =  \frac{ \int_{F_{(1,e)}(p) } d \sigma_{p,F_{(1,e)}}  }{  \int_{F_{(1,e)}(q) } d \sigma_{q,F_{(1,e)}}  } \text{ a.e. } (d \sigma \otimes d \sigma) (p,q)\,.
\eqnx
if the joint Hamiltonian flow given by $X_{f_j}$'s is ergodic on $ F_{(1, e)} $ with respect to the Louville measure for each $e \in \mathcal{F}$,  then 
\beqnx
&&\max_{i \in S(h)}  \Bigg|   \int_{ \partial D}  \varphi(x) \bigg( c_i \, | |D|^{-\frac{1}{2}} \phi_{ \mu_{0,i},\mu_{1,i},\eps_{0,i},\eps_{1,i}, \omega, m_i }  (x) |^2 \\
&& \qquad\qquad-  \int_{\mathcal{F} }   \frac{ \sigma_{x,F_{(1,e)}} \left( F_{(1,e)}(x)  \right)  }{  \sigma_{ F_{(1,e)}}  \left( F_{(1,e)} \right)   }   \, g_{i} (e)  \, de (e)   \bigg) d \sigma(x)
 \Bigg|= o_{\mathcal{C}}(1)  \,,
\eqnx
where $\{ g_{i} : \mathcal{F}  \rightarrow \mathbb{C}  \}_{i \in \mathbb{N} }$  is now defined as in Corollary~\ref{corollary3}.
\end{Theorem}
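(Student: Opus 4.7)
The proof will follow the structure used for Theorem~\ref{theorem4}: we derive Theorem~\ref{theorem5} from Theorem~\ref{theorem2} (respectively Corollary~\ref{corollary3} in the ergodic subcase) by invoking the two perturbation lemmas, Lemma~\ref{close} and Lemma~\ref{existence}. First, apply Theorem~\ref{theorem2} with the test function $\varphi$ to obtain a density-one subsequence $S(h) \subset J(h)$ such that the weak-convergence estimate \eqref{eq:pu2} holds for the NP eigenfunctions $\phi^i$, with error $o_{\mathcal{C}}(1)$ as $h \to +0$. Next, for each fixed $h$ and every $i \in J(h)$, which is a finite set by the generalized Weyl's law \eqref{weyl2}, Lemma~\ref{existence} produces a threshold $\omega_i > 0$ and solutions $\phi_{\mu_{0,i},\mu_{1,i},\eps_{0,i},\eps_{1,i},\omega,m_i}$ to \eqref{generalized_resonance} whenever $\omega < \omega_i$. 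Setting $\tilde\omega(h) := \min_{i \in J(h)} \omega_i$ secures the simultaneous existence of these resonance modes across the index set $J(h)$.

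Next, Lemma~\ref{close} with $s = -1/2$ provides, after a suitable $L^2$-renormalization,
\[
\bigl\| |D|^{-1/2}\phi_{\mu_{0,i},\mu_{1,i},\eps_{0,i},\eps_{1,i},\omega,m_i} - |D|^{-1/2}\phi^i \bigr\|_{\mathcal{C}^0(\partial D)} \leq C_{i}\,\omega^2,
\]
which converts, via the identity $|a|^2 - |b|^2 = (a-b)\overline{(a+b)}$ together with the uniform boundedness of $|D|^{-1/2}\phi^i$ on $\partial D$, into the $\mathcal{C}^0$-bound $\bigl| \, ||D|^{-1/2}\phi_{\dots}|^2 - ||D|^{-1/2}\phi^i|^2 \, \bigr| \leq C'_i\, \omega^2$. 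Integrating against $\varphi\,d\sigma$ and weighting by $c_i$, we then choose $\omega(h) \leq \tilde\omega(h)$ small enough that
\[
\max_{i \in S(h)} \, c_i\, C'_{i}\, \omega(h)^2\, \|\varphi\|_{L^1(\partial D,d\sigma)} \longrightarrow 0 \quad \text{as}\quad h \to +0.
\]
This is possible because $S(h)$ is finite for each fixed $h$. Finally, a triangle inequality splits the integrand appearing in \eqref{eq:tt1} into (i) a $\phi_{\dots}$–versus–$\phi^i$ discrepancy, controlled uniformly over $i \in S(h)$ by the choice of $\omega(h)$, and (ii) the residual quantity already handled by Theorem~\ref{theorem2}; combining the two contributions gives \eqref{eq:tt1}.

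The main obstacle will be tracking how the constants $C_{i}$ from Lemma~\ref{close} and the thresholds $\omega_i$ from Lemma~\ref{existence} degrade as $\tilde\lambda^i \to 0$, equivalently as $i \to \infty$. Since these scale inversely with the local spectral gap around $\tilde\lambda^i$, they can blow up along the accumulating sequence of NP eigenvalues. Fortunately, the quantifier structure of Theorem~\ref{theorem5} only asks for a qualitative rate $\omega(h) \to 0$, with no uniformity in $h$, and for each fixed $h$ the maximum of $c_i C'_i$ over the finite set $i \in S(h)$ is a finite (albeit possibly rapidly $h$-dependent) quantity that can be absorbed into a sufficiently fast-decaying choice of $\omega(h)$. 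The ergodic version of \eqref{eq:tt1} is obtained by the identical argument, substituting Corollary~\ref{corollary3} for Theorem~\ref{theorem2} in the first step.
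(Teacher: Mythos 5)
Your proposal is correct and follows essentially the same route as the paper's own proof: invoke Theorem~\ref{theorem2} (resp.\ Corollary~\ref{corollary3}) for the unperturbed NP eigenfunctions, use Lemma~\ref{existence} to secure a threshold $\tilde\omega(h)$ as a minimum over the finitely many relevant indices, use Lemma~\ref{close} (after $L^2$-renormalization) to control $\bigl|\,||D|^{-1/2}\phi_{\dots}|^2-||D|^{-1/2}\phi^i|^2\,\bigr|$ by $C_i'\omega^2$, shrink $\omega(h)$ to absorb the worst constant over the finite index set, and conclude by the triangle inequality. Your closing remark about the possible blow-up of the constants as $i\to\infty$, and why finiteness of $S(h)$ for fixed $h$ renders this harmless, matches the role played by the constant $C_{S(h)}$ in the paper's choice $\omega(h)\leq\min\{\eps,\tilde\omega(h),\tilde\omega(h)/C_{S(h)}\}$.
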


\begin{proof}
Let the compact convex polytope $\mathcal{C}$ be given.  Consider $\varphi \in \mathcal{C}^{\infty}(\partial D)$.
Given $\eps > 0$, by Theorem \ref{theorem2} and considering $h_0$ small enough such that for all $ h < h_0$, we have
\beqnx
\max_{i \in S(h)}  \left|   \int_{ \partial D}  \varphi(x) \left( c_i \, |D|^{-\frac{1}{2}} \phi^i (x) |^2  - \int_{\mathcal{F} } \int_{M_{X_{F},\text{erg}}( F_{(1,e)} )}   \mu(x,e)     \, g_{i} (\mu_e)  \,  d\nu_e \, (\mu_e  )  \, de (e)    \right) d \sigma(x)
 \right| \leq \eps .
\eqnx
Now, for each $h < h_0$, from Lemma \ref{existence}, there exists $\tilde{\omega}(h) = \min \left\{ \min_{i \in  S(h)} \omega_i, 1 \right\}$ such that for all $\omega < \tilde{ \omega }(h)$, there exists 
$$\left( ( \mu_{0,i},\mu_{1,i},\eps_{0,i},\eps_{1,i}, \omega ), m_i , \phi_{ \mu_{0,i},\mu_{1,i},\eps_{0,i},\eps_{1,i}, \omega, m_i }  \right) $$ solving \eqref{generalized_resonance}.   By Lemma \ref{close}, again upon a rescaling of $\phi_{ \mu_{0,i},\mu_{1,i},\eps_{0,i},\eps_{1,i}, \omega, m_i } $ while still denoting it as $\phi_{ \mu_{0,i},\mu_{1,i},\eps_{0,i},\eps_{1,i}, \omega, m_i } $, we have
\begin{equation}\label{eq:ff1}
\begin{split}
& \max_{i \in S(h)} c_i  \left|   \int_{ \partial D}  \varphi(x) \left(  \, | |D|^{-\frac{1}{2}} \phi^i (x) |^2 -  \, | |D|^{-\frac{1}{2}} \phi_{ \mu_{0,i},\mu_{1,i},\eps_{0,i},\eps_{1,i}, \omega, m_i }  (x) |^2  \right) d \sigma(x)
 \right|\\
 & \leq C_{S(h)} \| \varphi \|_{C^{0}(\partial D ) } \, \omega^2.
 \end{split}
\end{equation}
We may now choose 
$$ \omega(h) \leq \min \left\{ \eps,  \tilde{ \omega }(h),  \tilde{\omega}(h) / C_{S(h)} \right\} \, .$$
Then for all $\omega < \omega(h)$, we finally have from \eqref{eq:ff1} and Corollary \ref{corollary3} that
\[
\begin{split}
&  \max_{i \in S(h)}  \bigg|   \int_{ \partial D}  \varphi(x) \bigg( c_i \, | |D|^{-\frac{1}{2}} \phi_{ \mu_{0,i},\mu_{1,i},\eps_{0,i},\eps_{1,i}, \omega, m_i }  (x) |^2\\
& \hspace*{2cm}  - \int_{\mathcal{F} } \int_{M_{X_{F},\text{erg}}( F_{(1,e)} )}   \mu(x,e)     \, g_{i} (\mu_e)  \,  d\nu_e \, (\mu_e  )  \, de (e)     \bigg) d \sigma(x)
 \bigg| \\
 \leq & \left( 1+ \| \varphi \|_{C^{0}(\partial D ) } \right)  \eps  \, .
\end{split}
\]

The proof is complete. 
\end{proof}

We would like to remark that a similar conclusion holds for the explicit motivating example discussed in Section \ref{example_revolution} in the quasi-static case when $\omega\ll 1$, which we choose not to repeat.

\section*{Acknowledgment}
The work of H Liu was supported by Hong Kong RGC General Research Funds (project numbers, 11311122, 11300821 and 12301420), NSFC/RGC Joint Research Grant N\_CityU101/21 and ANR/RGC Joint Research Grant A\_CityU203/19.

\end{document}